\newcommand{\RR}{\mathbb{R}}
\newcommand{\PP}{\mathbb{P}}
\newcommand{\TT}{\mathbb{T}}
\newcommand{\PT}{\mathbb{P}\mathbb{T}}
\newcommand{\mcH}{\mathcal{H}}
\newcommand{\mcS}{\mathcal{S}}
\newcommand{\la}{\lambda}
\DeclareMathOperator{\Span}{span}
\DeclareMathOperator{\conv}{conv}
\DeclareMathOperator{\pos}{pos}
\DeclareMathOperator{\tconv}{tconv}
\DeclareMathOperator{\Supp}{Supp}
\DeclareMathOperator{\tropDet}{tropDet}
\newtheorem{thm}{Theorem}[section]
\newtheorem*{thm*}{Theorem}
\newtheorem{lem}[thm]{Lemma}
\newtheorem{prop}[thm]{Proposition}
\newtheorem{cor}[thm]{Corollary}
\theoremstyle{definition}
\newtheorem{rmk}[thm]{Remark}
\definecolor{cqcqcq}{rgb}{0.7529411764705882,0.7529411764705882,0.7529411764705882}
\definecolor{ffffff}{rgb}{1.,1.,1.}
\definecolor{cqcqcq}{rgb}{0.7529411764705882,0.7529411764705882,0.7529411764705882}
\definecolor{qqwuqq}{rgb}{0.,0.39215686274509803,0.}
\definecolor{qqttcc}{rgb}{0.,0.2,0.8}
\definecolor{ccqqqq}{rgb}{0.8,0.,0.}
\newenvironment{exmp}
  {\pushQED{\qed}\example}
  {\popQED\endexample}
\title{Tropical convex hullS of polyhedral sets}
\author{Cvetelina Hill}
\address{
Georgia Institute of Technology\\
  North Ave NW\\
  Atlanta, GA\\
  USA 30332
   }
\email{cvetelina.hill@gatech.edu}
\author{Sara Lamboglia}
\address{
Institut f\"ur Mathematik\\
 Goethe-Universit\"at Frankfurt\\
 Robert-Mayer-Str. 6-8\\
  60325 \indent Frankfurt a. M.\\
   Germany
   }
  \email{lamboglia@math.uni-frankfurt.de}
\author{Faye Pasley Simon}
\address{North Carolina State University\vspace{-.75em}}
\address{
Greensboro College\\
815 West Market Street\\
Greensboro\\
NC, USA 27401
   }
\email{faye.simon@greensboro.edu}
\begin{document}

\maketitle


\begin{abstract}
    In this paper we focus on the tropical convex hull of convex sets and polyhedral complexes. We give a vertex description of the tropical convex hull of a line segment and a ray. 
    Next we show that tropical convex hull and ordinary convex hull commute in two dimensions and characterize tropically convex polyhedra in any dimension. 
    Finally we show that the dimension of a tropically convex fan depends on the coordinates of its rays and give a lower bound on the degree of a fan tropical curve using only tropical techniques. 
\end{abstract}

\section*{Introduction}
\label{sec:intro}

Tropical convexity is the analog of classical convexity in the {\em tropical semiring} $(\RR,\oplus,\odot)$ where $a\oplus b=\min(a,b),$ and $a\odot b = a+b.$ The goal of this paper is to explore the interplay between tropical convexity and its classical counterpart. Our aim is to describe  the tropical convex hull of polyhedra, polyhedral complexes, and in particular, tropical curves.
\par The primary focus of tropical convexity is the study of \textit{tropical polytopes}: the tropical convex hull of finite sets. These are  widely studied \cite{D-S,C-G-Q-S,C-G-Q,G-S,J,G-M,A-G-G-2} and find applications in various areas of mathematics. Recently, techniques from tropical convexity have been applied to mechanism design \cite{C-T}, optimization \cite{A-G-G-3}, and maximum likelihood estimation \cite{R-S-T-U}. Some specific applications are  the resolution of monomial ideals \cite{D-YU}, and discrete event dynamic systems \cite{B-C-G-Q}. Moreover, computational tools exist to aid in further study of tropical polytopes \cite{J-2,A-G-G-2}.
\par A tropical polytope is not always classically convex, but does have an explicit description as the finite union of some ordinary polytopes \cite{D-S}.
Tropical polytopes which are also ordinary polytopes are called \textit{polytropes} as discussed in \cite{J-K}. However, there exist ordinary polytopes which are tropically convex, but are not finitely generated (for an example, see Figure \ref{fig:Notdistributive}). Here we further examine this relationship between classical convexity and tropical convexity by  studying the structure of the tropical convex hull of polyhedral sets. Our first result is the following:
\begin{thm*}[Theorems \ref{thm: PequalsT} and \ref{thm: tconv commute in R2}]
If $a,b \in \RR^n$ and $U \subset \RR^2$, then 
\begin{enumerate}
    \item[(i)] $\tconv\conv(a,b) = \conv\tconv(a,b)$;
    \item[(ii)] $\tconv \pos (a) = \pos \tconv(0,a)$;
    \item[(iii)] $\tconv\conv U = \conv\tconv U$.
\end{enumerate}
\end{thm*}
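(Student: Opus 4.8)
For part (i) I would argue as follows. Tropical convex hull commutes with ordinary translations of $\RR^n$ and with permutations of coordinates, so after translating (replacing $(a,b)$ by $(0,b-a)$) and permuting we may assume $a=0$ and $b_1\le\cdots\le b_n$; then $\conv(0,b)=\{\theta b:0\le\theta\le1\}$. The device is the family $\mathcal C$ of concave, piecewise-linear functions $\RR\to\RR$ all of whose slopes lie in $[0,1]$, together with the evaluation map $\operatorname{ev}_b(f):=(f(b_1),\dots,f(b_n))$. The plan is to prove
\[
\tconv\conv(0,b)=\operatorname{ev}_b(\mathcal C)=\conv\tconv(0,b),
\]
which gives (i) at once. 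For the first equality: a tropical combination $\bigoplus_k\mu_k\odot(\theta_k b)$ has $i$-th coordinate $\min_k(\mu_k+\theta_k b_i)$, hence equals $\operatorname{ev}_b(f)$ for $f(u)=\min_k(\mu_k+\theta_k u)$, a pointwise minimum of affine functions of slopes $\theta_k\in[0,1]$; conversely every $f\in\mathcal C$ is the minimum of its finitely many affine supports, whose slopes lie in $[0,1]$, and $\theta b\in\conv(0,b)$ for $\theta\in[0,1]$. For the second equality: the tropical segment is $\tconv(0,b)=\{\operatorname{ev}_b(g):g(u)=\min(s,t+u)\}$, i.e.\ the evaluations of the one-kink functions with slopes $\{0,1\}$ (its $i$-th coordinate is $\min(s,t+b_i)$); since $\mathcal C$ is convex, convex combinations of such evaluations lie in $\operatorname{ev}_b(\mathcal C)$, and conversely, given $f\in\mathcal C$ I extend it outside $[b_1,b_n]$ to $\hat f\in\mathcal C$ with slope $1$ far to the left and slope $0$ far to the right (not changing $\operatorname{ev}_b$), whereupon $\hat f=\sum_i\delta_i g_i+\mathrm{const}$ with $g_i(u)=\min(0,u-b_i)$ and $\delta_i\ge0$, $\sum_i\delta_i=1$ the slope-decrements of $\hat f$ at the $b_i$ — a genuine convex combination of one-kink functions — so $\operatorname{ev}_b(f)=\operatorname{ev}_b(\hat f)\in\conv\tconv(0,b)$.

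Part (ii) then follows from (i): $\pos$ being the conical hull and $0\in\conv(0,a)$, we have $\pos(a)=\bigcup_{\mu\ge0}\conv(0,\mu a)$, an increasing (hence directed) union, and tropical convex hull commutes with directed unions since a finite tropical combination uses finitely many points. Hence
\[
\tconv\pos(a)=\bigcup_{\mu\ge0}\tconv\conv(0,\mu a)=\bigcup_{\mu\ge0}\conv\tconv(0,\mu a)=\bigcup_{\mu\ge0}\mu\cdot\conv\tconv(0,a)=\pos\tconv(0,a),
\]
the second equality by (i), the third since positive scaling commutes with $\conv$ and $\tconv$, and the last since the conical hull of a set containing $0$ equals $\bigcup_{\mu\ge0}\mu\cdot(\text{its convex hull})$.

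For part (iii) a shorter, independent argument works, via the quotient by the tropical lineality. Let $\pi\colon\RR^2\to\RR$, $\pi(x)=x_1-x_2$; it is linear with kernel $\RR(1,1)$, and an $\RR(1,1)$-invariant subset of $\RR^2$ equals the $\pi$-preimage of its $\pi$-image. The key fact to establish is
\[
\tconv(S)=\pi^{-1}\big(\conv\,\pi(S)\big)\qquad\text{for every }S\subseteq\RR^2.
\]
Here "$\subseteq$" holds because $\pi\big(\bigoplus_k\mu_k\odot s_k\big)=\min_k(\mu_k+(s_k)_1)-\min_k(\mu_k+(s_k)_2)$ lies between $\pi(s_i)$ and $\pi(s_j)$ for the indices $i,j$ realizing the two minima, hence in $\conv\,\pi(S)$; and "$\supseteq$" holds because $\tconv(S)$ is $\RR(1,1)$-invariant and, for $s,s'\in S$, the tropical segment $\tconv(s,s')$ is the strip $\pi^{-1}\big([\pi(s)\wedge\pi(s'),\,\pi(s)\vee\pi(s')]\big)$ (a direct computation of $\min(\mu\,(1,1)+s,\,\nu\,(1,1)+s')$), so $\tconv(S)$ contains the union of these strips, which is exactly $\pi^{-1}(\conv\,\pi(S))$. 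Granting this, since $\pi$ is linear, $\conv$ is idempotent, and $\pi^{-1}$ of an interval is a convex strip,
\[
\tconv\conv U=\pi^{-1}\big(\conv\,\pi(\conv U)\big)=\pi^{-1}\big(\conv\,\pi(U)\big)=\conv\,\pi^{-1}\big(\conv\,\pi(U)\big)=\conv\tconv U.
\]

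The only step carrying real content is the inclusion $\operatorname{ev}_b(\mathcal C)\subseteq\conv\tconv(0,b)$ in part (i): expressing an arbitrary concave piecewise-linear function with slopes in $[0,1]$, up to its values on $b_1,\dots,b_n$, as a finite convex combination of one-kink functions. The bookkeeping of the extension $\hat f$ and of the weights $\delta_i$, and the degenerate cases where some $b_i$ coincide (so $\conv(0,b)$ is lower-dimensional), are where care is needed; the rest is formal. As a by-product one reads off the promised vertex description: modulo the all-ones direction, the vertices of $\tconv\conv(0,b)$ are the evaluations of the one-kink functions $u\mapsto\min(u,b_i)$, namely the points $(\min(b_1,b_i),\dots,\min(b_n,b_i))$, $i=1,\dots,n$.
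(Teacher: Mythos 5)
Your plan runs on a single systematic error: you drop the normalization $\bigoplus_i a_i=0$ from the definition of a tropical convex combination, which replaces the tropical convex hull by the tropical span. In part (i), a combination $\bigoplus_k\mu_k\odot(\theta_k b)$ corresponds to a concave piecewise-linear $f$ with slopes in $[0,1]$ \emph{and} $f(0)=\min_k\mu_k=0$. Your class $\mathcal C$ omits this anchoring, so $\operatorname{ev}_b(\mathcal C)$ is invariant under adding $c\mathbf 1$ (replace $f$ by $f+c$) and is unbounded, while $\tconv\conv(0,b)$ and $\conv\tconv(0,b)$ are bounded simplices (Lemma~\ref{lem: H-rep}); neither displayed equality holds as written. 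The same omission breaks your final decomposition even after anchoring: $g_i(u)=\min(0,u-b_i)$ evaluates to $p_i-b_i\mathbf 1$, a translate of the pseudovertex lying \emph{off} the tropical segment, so $\sum_i\delta_i g_i+\mathrm{const}$ exhibits $\operatorname{ev}_b(f)$ as a convex combination of points not in $\tconv(0,b)$ plus a multiple of $\mathbf 1$, which certifies nothing. (For $b=(1,2)$, $f(u)=u/2$, your recipe writes $(\tfrac12,1)$ as $\tfrac12(0,0)+\tfrac12(-1,0)+(1,1)$.) The idea is repairable: require $f(0)=0$ and decompose over the kinks $c_j>0$ of $f$ on $[0,b_n]$ as $f(u)=\sum_j(\sigma_{j-1}-\sigma_j)\min(c_j,u)+\sigma_m\,u+(1-\sigma_0)\cdot 0$, where $\sigma_0\ge\cdots\ge\sigma_m$ are the slopes; the coefficients are nonnegative and sum to $1$, and $\min(s,\cdot)$ for $s\ge 0$, the identity, and the zero function all evaluate onto $\tconv(0,b)$. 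With that fix, (i) becomes a genuinely different and rather clean alternative to the paper's route (explicit $\mathcal H$-representation of the simplex plus the type inequalities of Develin--Sturmfels), and your part (ii) is then correct and essentially the paper's argument.

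Part (iii), however, rests on a false identity and does not survive the fix. For $S=\{(0,0)\}$ one has $\tconv(S)=\{(0,0)\}$ (a single point is tropically convex), whereas $\pi^{-1}(\conv\pi(S))=\RR(1,1)$. Likewise $\tconv(s,s')$ is the bounded two-edge path through its pseudovertex, not the strip $\pi^{-1}\bigl([\pi(s)\wedge\pi(s'),\pi(s)\vee\pi(s')]\bigr)$; the strip is what $\min(\mu(1,1)+s,\nu(1,1)+s')$ sweeps out only if you forget $\mu\oplus\nu=0$. Since $\tconv\conv U$ is bounded for bounded $U$ (Lemma~\ref{Cor: properties}) while $\pi^{-1}$ of an interval never is, no adjustment of constants rescues the chain of equalities: quotienting $\RR^2$ by $\RR\mathbf 1$ reduces to the wrong object, because tropical convexity in $\RR^2$ is governed by three sectors, not by the single coordinate $x_1-x_2$. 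Here you need something like the paper's argument — identify the vertices of $\bigcap_j(\conv V+\mcS_j)$ as vertices of $\conv V$ or pseudovertices of $\tconv V$ for finite $V$ (Lemma~\ref{lemma: convTconv in 2D}), then pass to arbitrary $U$ by Carath\'eodory — or a two-variable version of your corrected slope calculus.
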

Ordinary and tropical convex hull do not commute as in part (i) even for small examples (e.g., triangles) in dimension $3$. However, the  tropical convex hull of an ordinary polyhedron is itself an ordinary polyhedron. We characterize which ordinary polyhedra are tropically~convex.
\begin{thm*}[Theorem~\ref{thm: tropconv polythedra}]
A full-dimensional ordinary polyhedron is tropically convex if and only if all of its defining halfspaces are tropically convex.
\end{thm*}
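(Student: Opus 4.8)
The plan is as follows. The ``if'' direction is immediate and uses no dimension hypothesis: if every $H_i$ is tropically convex and $x,y\in P=\bigcap_iH_i$, then $\tconv(x,y)\subseteq H_i$ for every $i$, so $\tconv(x,y)\subseteq P$ and $P$ is tropically convex.

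For the converse, assume $P$ is full-dimensional and tropically convex, and fix an irredundant presentation $P=\bigcap_{i=1}^m H_i$ with $H_i=\{u\in\RR^n:\langle a_i,u\rangle\le b_i\}$; I will show each $H_j$ is tropically convex. Because the presentation is irredundant and $P$ is full-dimensional, $F_j:=P\cap\{\langle a_j,\cdot\rangle=b_j\}$ is a facet, so one can fix $p\in\operatorname{relint}(F_j)$ and then $\varepsilon>0$ with $B(p,\varepsilon)\subseteq\bigcap_{i\ne j}\operatorname{int}(H_i)$, whence $B(p,\varepsilon)\cap H_j\subseteq P$. The engine of the proof is the homothety $\psi_t(u)=p+t(u-p)=tu+(1-t)p$ centered at $p$, for small $t>0$, which has two key properties. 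First, $\psi_t$ carries tropical segments to tropical segments, $\psi_t\bigl(\tconv(x,y)\bigr)=\tconv\bigl(\psi_t(x),\psi_t(y)\bigr)$: it factors as the positive scaling $u\mapsto tu$ followed by a translation, scaling only rescales tropical coefficients and hence preserves their normalization, and tropical convex combinations commute with translation by an arbitrary vector. Second, since $p\in\partial H_j$, the identity $\langle a_j,\psi_t(u)\rangle=b_j+t\bigl(\langle a_j,u\rangle-b_j\bigr)$ shows that $\psi_t$ maps $H_j$ into $H_j$ and maps $\RR^n\setminus H_j$ into $\RR^n\setminus H_j$.

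Now suppose, toward a contradiction, that $H_j$ is not tropically convex, so there exist $x,y\in H_j$ and $z\in\tconv(x,y)$ with $z\notin H_j$. Choose $t>0$ with $t\max\{\|x-p\|,\|y-p\|\}<\varepsilon$. Then $\psi_t(x),\psi_t(y)\in B(p,\varepsilon)\cap H_j\subseteq P$, while $\psi_t(z)\notin H_j$ and hence $\psi_t(z)\notin P$; but $\psi_t(z)\in\tconv\bigl(\psi_t(x),\psi_t(y)\bigr)\subseteq P$ by tropical convexity of $P$, a contradiction. The routine parts are the first property of $\psi_t$ (a short coordinate computation, the only subtlety being the bookkeeping of the normalization $\bigoplus_k\lambda_k=0$ of the tropical coefficients) and the standard facet and relative-interior facts for full-dimensional polyhedra. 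The step I expect to carry the real content is this localization: an arbitrary failure of tropical convexity inside the halfspace $H_j$ need not involve points of $P$ at all, and the homothety $\psi_t$ is exactly the tool that shrinks such a failure into a neighborhood of a relative-interior point of the facet $F_j$ while respecting tropical convex combinations --- which is also precisely where the full-dimensionality hypothesis, needed to produce $F_j$ and $p$, enters.
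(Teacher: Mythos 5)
Your proof is correct, and the converse direction takes a genuinely different route from the paper's. The paper also localizes a failure of tropical convexity of a halfspace $\mcH$ into $P$, but it does so by first pushing the bad pair onto the boundary hyperplane $H$, invoking the explicit pseudovertex formulas for $\tconv(x,y)$ (split into cases by the sign pattern of $y-x$), translating along $H$ so that one endpoint lands in $P$, and then classically interpolating the other endpoint to a boundary point $z=\lambda Tx+(1-\lambda)Ty$ of $P$; a direct computation with the linear functional defining $\mcH$ then shows the corresponding pseudovertex of $\tconv(Tx,z)$ still violates the inequality. Your argument replaces all of this with the single observation that the homothety $\psi_t(u)=tu+(1-t)p$ centered at a relative-interior point $p$ of the facet commutes with tropical convex combinations (positive scaling rescales the tropical coefficients $a,b$ without disturbing the normalization $a\oplus b=0$, and translation commutes with coordinatewise $\min$) while preserving both $H_j$ and its complement, so the entire bad configuration can be shrunk into $B(p,\varepsilon)\cap H_j\subseteq P$ at once. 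This buys a cleaner proof: no pseudovertex formulas, no sign case analysis, and no explicit inner-product computation; what it costs is nothing essential, since both arguments use full-dimensionality and irredundancy in the same place, namely to produce a nonempty facet $F_j$ and a point of $P$ in the relative interior of $\partial H_j$. One small point worth making explicit when writing it up: a failure of tropical convexity of $H_j$ is by definition witnessed by a single pair $x,y$ and a point of the tropical segment $\tconv(x,y)$, which is exactly the configuration your homothety transports.
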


Many properties and theorems valid in classical convexity are also valid in the tropical setting; for example, separation of convex sets~\cite{C-G-Q,G-S}, Minkowski-Weyl Theorem~\cite{G-K,G-K-2,J},  Carath\'eodory and Helly  Theorems~\cite{D-S,G-M}, and Farkas Lemma~\cite{D-S}.

Here we consider the  classical result in algebraic geometry  (see for example \cite{E-H}) which  bounds the degree of a projective variety $X$ from below by
 \begin{equation}\label{eq: class}  \dim\Span X -\dim X +1 \leq \deg X. \end{equation}
Our first aforementioned result describing the tropical convex hull of line segments and rays provides some information on the dimension of tropical convex hulls. Using this result we study a tropical analogue of (\ref{eq: class}) in the case of tropical curves. We can substitute  $\Span X$ either with the tropical convex hull of  a tropical curve $\Gamma$ or with a tropical linear space of smallest dimension containing $\Gamma$. The latter may not be unique and it is not easy to determine. Thus, we choose to replace $\Span X$ with $\tconv\Gamma.$ The tropical analogue of (\ref{eq: class}) we consider is 
\begin{equation}
    \label{eq: tropClass}
     \dim \tconv\Gamma \le \deg \Gamma.
\end{equation}
If $\Gamma$ is realizable, then this follows immediately from the classical inequality (\ref{eq: class}). In Section~\ref{sec: tropC-2} we give  a proof of (\ref{eq: tropClass}) for fan tropical curves that  relies entirely on tropical techniques. 
\par The structure of this paper is as follows. In Section~\ref{sec:onedim} we recall basic definitions of tropical convexity. Then
we describe the tropical convex hull of a line segment and a ray as ordinary polyhedra. Using this result we show the dimensions are easily calculable using coordinates of the respective endpoints. We also prove that ordinary and tropical convex hull commute in two dimensions. In Section~\ref{sec:higher-dim} we prove that convexity and polyhedrality are preserved after taking the tropical convex hull. Next we classify tropically convex ordinary halfspaces, linear spaces, and polyhedra. Finally, in Section~\ref{sec: tropC-2}, we use our results to prove the inequality (\ref{eq: tropClass}) in the case of fan tropical curves. 

\section{
Line segments, rays, and sets in $\RR^2$}
\label{sec:onedim}

Key definitions from tropical convexity are presented in the first part of this section. A description of the tropical convex hull of any arbitrary set is given in Proposition~\ref{prop: tconv intersection}. In Theorem $\ref{thm: PequalsT}$ we show that ordinary and tropical convex hull commute in any dimension in the case of two points. This allows us to find the dimension of the tropical convex hull of a line segment or a ray using the coordinates of its endpoints in Corollary~\ref{cor: dim of line segment}. We also prove ordinary and tropical convex hull always commutes in two dimensions in Theorem~\ref{thm: tconv commute in R2}.

A set $U\subset\RR^n$ is \textit{tropically convex} if $(a\odot x)\oplus(b\odot y)$ is in $U$ for any $x,y\in U$ and $a,b\in\RR$ with $a\oplus b=0$.  The \textit{tropical convex hull} of $U\subset \mathbb R^{n}$ is the smallest tropically convex set that contains $U.$ This is defined equivalently in \cite{G-K} by 
\begin{equation}\label{def: tconvInf}
    \tconv U=\bigcup_{V\subset U : |V|<\infty }\tconv V.
\end{equation}

If $V=\{v_1,\ldots,v_k\}$ is a finite set, then by \cite[Definition 2.1]{G-K} its tropical convex hull is 
given by
\begin{equation*}\label{def:tconv}
    \tconv V=\left\{ a_1\odot v_1\oplus \cdots \oplus a_k\odot v_k \mid a_i \in \RR,\hspace{1em} \bigoplus_{i=1}^k a_i=0  \right\}.
\end{equation*}
\noindent Furthermore, points in $\tconv V$ can be characterized by types as defined in \cite{D-S}. Let $[n]=\{1,\ldots,n\}$ and $[n]_0 = \{0,1,\ldots,n\}$. Given a point $x\in \mathbb R^n,$
the \textit{type of $x$ relative to $V$}, or \textit{covector} in \cite{F-R,L-S}, is the $n$-tuple $T_x = \left( T_1, \ldots, T_n\right)$ such that $T_j \subseteq [k]$ for all $j$, and 
    $i \in T_j $ if $\min(v_i-x)$ is obtained in the $j$th coordinate.
This is equivalent to saying that $i\in T_j$ if $x\in v_i+\mathcal S_j,$ where $\mcS_j$ is a \textit{sector} of $\mathbb R^n$ spanned by $\{-e_i:i\in[n]\}$ for $j=0$, and $\{e_0, -e_i:i\in[n], i\neq j\}$ for $j\in[n]$.
Here $e_1,\ldots,e_n$ represent the standard unit vectors in $\RR^n$ with $e_{ij}=1$ if $i=j$ and $e_{ij}=0$ otherwise. We denote the vector $\sum_{i=1}^ne_i$ by $e_0$.
The cone $\mcS_j$ is the closure of one of the $n+1$ connected components of $\mathbb R^n\setminus L_{n-1}.$ Here $L_{n-1}$ denotes the max-standard tropical hyperplane, or the tropicalization of $V(x_1+\ldots+x_n+1)$ with the max convention, whose cones are  $\pos(-e_{i_1},\ldots,-e_{i_n})$. 
\par The proof of the Tropical Farkas Lemma  \cite{D-S} states that $x\in \tconv V$ if and only if the $j$th entry of  $T_x$ is nonempty for all $j$, meaning there exists at least one $v_i$ such that  $x\in v_i+\mcS_j$ \cite[Lemma 28]{J-L}.
As a consequence, we have the following proposition which also holds true in the case of $U\subset(\RR\cup\{\infty\})^{n}$  \cite[Proposition 7.3]{L-S}. We give here a proof for completeness. Figure \ref{fig:example TP^2} gives an example of (\ref{eq: intersection of minkowski sums}) in $\mathbb R^2.$

\begin{prop}\label{prop: tconv intersection}
If $U \subset \mathbb R^n$, then the tropical convex hull of $U$ is equal to the intersection of the Minkowski sums of $U$ with each of the sectors. That is
 \begin{equation}
   \label{eq: intersection of minkowski sums}
    \tconv U=\bigcap_{j=0}^n(U+\mcS_j).
    \end{equation}
\end{prop}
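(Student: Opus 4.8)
The plan is to prove the set equality in (\ref{eq: intersection of minkowski sums}) by a double inclusion, reducing both directions to the finite case via the definition (\ref{def: tconvInf}) and the characterization of membership in a finite tropical convex hull by types.

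\textbf{The inclusion $\tconv U \subseteq \bigcap_{j=0}^n(U+\mcS_j)$.} By (\ref{def: tconvInf}) it suffices to show $\tconv V \subseteq \bigcap_{j=0}^n(U+\mcS_j)$ for every finite subset $V = \{v_1,\dots,v_k\} \subseteq U$. Fix such a $V$ and a point $x \in \tconv V$. By the consequence of the Tropical Farkas Lemma cited just before the proposition, every entry $T_j$ of the type $T_x$ is nonempty, so for each $j \in [n]_0$ there is some index $i$ with $x \in v_i + \mcS_j$. Since $v_i \in V \subseteq U$, this gives $x \in U + \mcS_j$ for every $j$, hence $x \in \bigcap_{j=0}^n(U+\mcS_j)$. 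This direction is essentially immediate once the type characterization is in hand; one only has to be careful that the sectors $\mcS_j$ here are exactly the cones appearing in the definition of type, which the excerpt has set up.

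\textbf{The reverse inclusion $\bigcap_{j=0}^n(U+\mcS_j) \subseteq \tconv U$.} Take $x$ in the intersection. For each $j \in [n]_0$ choose a point $u_j \in U$ with $x \in u_j + \mcS_j$, and let $V = \{u_0, u_1,\dots,u_n\}$, a finite subset of $U$ of size at most $n+1$. Now compute the type $T_x$ of $x$ relative to this $V$: by construction, for each $j$ the index of $u_j$ lies in $T_j$, so every coordinate of $T_x$ is nonempty. By the Tropical Farkas Lemma characterization again, $x \in \tconv V$, and since $V \subseteq U$ we get $x \in \tconv U$ by (\ref{def: tconvInf}).

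\textbf{Main obstacle.} There is no deep obstacle; the proposition is a clean repackaging of the type/Farkas characterization, and the excerpt has already quoted the exact statement needed (``$x \in \tconv V$ iff the $j$th entry of $T_x$ is nonempty for all $j$''). The one point requiring a little care is the bookkeeping in the reverse direction: one must note that membership in a finite tropical convex hull depends on the ambient finite generating set, and that enlarging $V$ from the chosen $\{u_0,\dots,u_n\}$ to any larger finite subset of $U$ cannot destroy membership (indeed $\tconv V \subseteq \tconv V'$ when $V \subseteq V'$), so that the conclusion $x \in \tconv U$ via (\ref{def: tconvInf}) is legitimate. I would also remark, as the statement does, that the same argument works verbatim when $U \subseteq (\RR \cup \{\infty\})^n$, since the type characterization is available there too, referring to \cite[Proposition 7.3]{L-S}.
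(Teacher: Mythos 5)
Your proof is correct and follows essentially the same route as the paper's: both directions reduce to a finite subset $V\subseteq U$ via (\ref{def: tconvInf}) and then invoke the type/Tropical Farkas characterization of membership in $\tconv V$. The only cosmetic difference is that you spell out the type bookkeeping where the paper simply writes $\bigcap_{j=0}^n(V+\mcS_j)=\tconv V$ for finite $V$.
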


\begin{proof}
If $x\in \tconv U$, then (\ref{def: tconvInf})  implies that $x\in \tconv V$ for some finite set $V\subset U.$ By the Tropical Farkas Lemma~\cite{D-S}  we obtain $x\in \bigcap_{j=0}^n(V+\mcS_j)$, hence $x\in \bigcap_{j=0}^n(U+\mcS_j).$
 On the other hand, if $x\in  \bigcap_{j=0}^n(U+\mcS_j),$ then there exist $u_1,\ldots, u_n \in U$ such that  $x\in u_j+\mcS_j$ for every $j.$ For $V=\{u_1,\ldots, u_n\}$ it follows that $x\in \bigcap_{j=0}^n(V+\mcS_j)= \tconv V \subset \tconv U.$
\end{proof}


\begin{figure}
\centering
    \begin{tikzpicture}[scale=0.16, line cap=round,line join=round,x=1.0cm,y=1.0cm]
\clip(-10.1,-8.5) rectangle (9.94,6.42);
\draw [->,line width=1.pt] (0.,0.)-- (-8.,0.);
\draw [->,line width=1.pt] (0.,0.)-- (0.,-8.);
\draw [->,line width=1.pt] (0.,0.)-- (5.5,5.5);
\node[label={$\mcS_0$}] at (-5,-8) {};
\node[label={$\mcS_1$}] at (4,-4) {};
\node[label={$\mcS_2$}] at (-2,1.5) {};
\end{tikzpicture}
\hspace{1em}
\begin{tikzpicture}[scale=0.35]
    \filldraw[fill=gray, fill opacity=0.4] (0,2) -- (1,0) -- (3,3) -- (1,5) -- cycle;
    \node[circle, fill=black, minimum size=2mm,inner sep=0, outer sep=0] (a) at (0,2) {};
    \node[circle, fill=black, minimum size=2mm,inner sep=0, outer sep=0] (b) at (1,0) {};
    \node[circle, fill=black, minimum size=2mm,inner sep=0, outer sep=0] (c) at (3,3) {};
    \node[circle, fill=black, minimum size=2mm,inner sep=0, outer sep=0] (d) at (1,5) {};
    \end{tikzpicture}   
    \hspace{1em}
\begin{tikzpicture}[scale=0.35]
    \filldraw[draw=none, fill=gray,fill opacity=0.4] (-2,5) -- (1,5) -- (3,3) -- (3,-0.5) to[bend left] (-1,0) to[bend right] cycle;
    \node[circle, fill=black, minimum size=2mm,inner sep=0, outer sep=0] (a) at (0,2) {};
    \node[circle, fill=black, minimum size=2mm,inner sep=0, outer sep=0] (b) at (1,0) {};
    \node[circle, fill=black, minimum size=2mm,inner sep=0, outer sep=0] (c) at (3,3) {};
    \node[circle, fill=black, minimum size=2mm,inner sep=0, outer sep=0] (d) at (1,5) {};
    \draw[dashed] (0,2) -- (1,0) -- (3,3) -- (1,5) -- cycle;
    \draw (-2,5)--(1,5)
          (3,3)--(3,-0.5)
          (1,5)--(3,3);
    \end{tikzpicture}
    \hspace{1em}
    \begin{tikzpicture}[scale=0.35]
    \filldraw[draw=none, fill=gray,fill opacity=0.4] (1,5) -- (0,2) -- (0,-1) to[bend left] (2.5,-0.5) to[bend right] (3,7) -- cycle;
    \node[circle, fill=black, minimum size=2mm,inner sep=0, outer sep=0] (a) at (0,2) {};
    \node[circle, fill=black, minimum size=2mm,inner sep=0, outer sep=0] (b) at (1,0) {};
    \node[circle, fill=black, minimum size=2mm,inner sep=0, outer sep=0] (c) at (3,3) {};
    \node[circle, fill=black, minimum size=2mm,inner sep=0, outer sep=0] (d) at (1,5) {};
    \draw[dashed] (0,2) -- (1,0) -- (3,3) -- (1,5) -- cycle;
    \draw (1,5)--(3,7)
            (0,2)--(0,-1)
            (0,2)--(1,5);
     
    \end{tikzpicture}
    \hspace{1em}
    \begin{tikzpicture}[scale=0.35]
    \filldraw[draw=none, fill=gray,fill opacity=0.4] (1,0) -- (5,4) to[bend right] (1,5.5) to[bend right] (-2,0) -- cycle;
    \node[circle, fill=black, minimum size=2mm,inner sep=0, outer sep=0] (a) at (0,2) {};
    \node[circle, fill=black, minimum size=2mm,inner sep=0, outer sep=0] (b) at (1,0) {};
    \node[circle, fill=black, minimum size=2mm,inner sep=0, outer sep=0] (c) at (3,3) {};
    \node[circle, fill=black, minimum size=2mm,inner sep=0, outer sep=0] (d) at (1,5) {};
    \draw[dashed] (0,2) -- (1,0) -- (3,3) -- (1,5) -- cycle;
    \draw (-2,0)--(1,0)
            (1,0)--(5,4);
    \vspace{2em}
    \end{tikzpicture}
    \hspace{1em}
    \begin{tikzpicture}[scale=0.35]
    \filldraw[fill=gray,fill opacity=0.4] (0,0) -- (1,0) -- (3,2) -- (3,3) -- (1,5) -- (0,2) -- cycle;  
    \node[circle, fill=black, minimum size=2mm,inner sep=0, outer sep=0] (a) at (0,2) {};
    \node[circle, fill=black, minimum size=2mm,inner sep=0, outer sep=0] (b) at (1,0) {};
    \node[circle, fill=black, minimum size=2mm,inner sep=0, outer sep=0] (c) at (3,3) {};
    \node[circle, fill=black, minimum size=2mm,inner sep=0, outer sep=0] (d) at (1,5) {};
    \draw[dashed] (0,2) -- (1,0) -- (3,3) -- (1,5) -- cycle;
    \end{tikzpicture}

    \caption{Illustration of Proposition \ref{prop: tconv intersection} in $\PT^2$. From left to right: The three sectors, a polytope $P$, the Minkowski sums $P+\mcS_0, P+\mcS_1,$ $P+\mcS_2,$ and $\tconv P$.}
    \label{fig:example TP^2}
\end{figure}
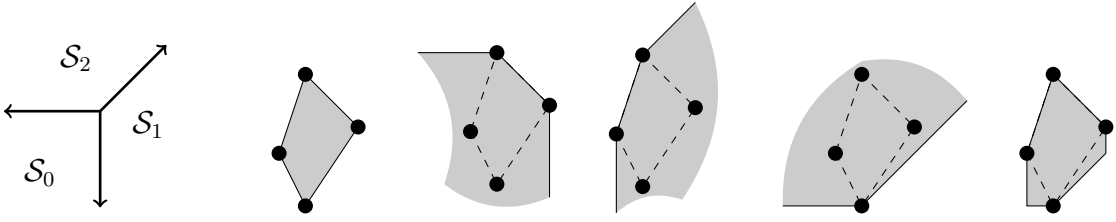

As a direct consequence of Proposition~\ref{prop: tconv intersection} we obtain Corollary~\ref{cor: tconvPisconvex}. Note that it can  also be proven directly by using the definition of tropical convex hull.
Lemma~\ref{cor: convtconvVintconvP} shows that repeatedly taking the convex hull and tropical convex hull of a
set stabilizes after one step.
\begin{cor}\label{cor: tconvPisconvex}
    If $P \subset \RR^n$ is convex, then $\tconv P$ is convex.
\end{cor}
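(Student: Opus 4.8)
The plan is to deduce Corollary~\ref{cor: tconvPisconvex} directly from Proposition~\ref{prop: tconv intersection}, using that the Minkowski sum of two convex sets is convex and that the intersection of convex sets is convex. The key identity we rely on is
\begin{equation*}
  \tconv P = \bigcap_{j=0}^n (P + \mcS_j).
\end{equation*}

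Here are the steps I would carry out. First, observe that each sector $\mcS_j$ is a polyhedral cone, hence a convex subset of $\RR^n$; this is immediate from the description of $\mcS_j$ as the closure of a connected component of $\RR^n \setminus L_{n-1}$, or equivalently as $\pos$ of a set of vectors (the $\mcS_0 = \pos(-e_1,\ldots,-e_n)$ and $\mcS_j = \pos(e_0,-e_i : i \neq j)$ descriptions given just before the proposition). Second, recall the standard fact that if $A, B \subset \RR^n$ are convex then so is $A + B = \{a + b : a \in A,\ b \in B\}$: given $a_1 + b_1,\ a_2 + b_2 \in A+B$ and $t \in [0,1]$, the point $t(a_1+b_1) + (1-t)(a_2+b_2) = (ta_1 + (1-t)a_2) + (tb_1 + (1-t)b_2)$ lies in $A+B$ by convexity of $A$ and $B$ separately. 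Applying this with $A = P$ (convex by hypothesis) and $B = \mcS_j$ (convex by the first step), each $P + \mcS_j$ is convex. Third, recall that an arbitrary intersection of convex sets is convex. Therefore $\bigcap_{j=0}^n (P + \mcS_j)$ is convex, and by Proposition~\ref{prop: tconv intersection} this set equals $\tconv P$, which completes the proof.

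There is essentially no obstacle here: the corollary is a formal consequence of Proposition~\ref{prop: tconv intersection} together with two elementary stability properties of convexity. The only point worth stating carefully is the convexity of the sectors $\mcS_j$, which is already built into their definition as polyhedral cones in the excerpt. As the authors note, one could alternatively argue directly from the definition of tropical convexity — for $x, y \in \tconv P$ one would need to produce, for each classical convex combination $z = t x + (1-t) y$, a representation of $z$ as a tropical combination of points of $P$ — but routing the argument through Proposition~\ref{prop: tconv intersection} is cleaner and is the approach I would take.
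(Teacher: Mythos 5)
Your argument is correct and is exactly the route the paper takes: the authors state the corollary as ``a direct consequence of Proposition~\ref{prop: tconv intersection},'' which is precisely your observation that each $P+\mcS_j$ is convex (Minkowski sum of convex sets) and that convexity is preserved under intersection. The paper leaves these elementary steps implicit, so your write-up simply fills in the same proof in full detail.
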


\begin{cor}
\label{cor: convtconvVintconvP}
If $U \subset \mathbb R^n$, then
$
\label{eq: corollary of tconvPisConvex}
    \tconv\conv U=\tconv(\conv \tconv U).
$
\end{cor}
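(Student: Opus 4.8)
The plan is to prove the two inclusions separately, using nothing beyond monotonicity of the two hull operators together with Corollary~\ref{cor: tconvPisconvex}. Throughout I will use that both $\conv$ and $\tconv$ are monotone (if $A\subseteq B$ then $\conv A\subseteq \conv B$ and $\tconv A\subseteq \tconv B$, since in each case the right-hand hull is a convex, resp.\ tropically convex, set containing $A$), and that each operator fixes the corresponding hull, i.e.\ $\conv\conv A=\conv A$ and $\tconv\tconv A=\tconv A$.

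The inclusion $\tconv\conv U\subseteq\tconv(\conv\tconv U)$ is immediate: from $U\subseteq\tconv U$ and monotonicity of $\conv$ we get $\conv U\subseteq\conv\tconv U$, and then monotonicity of $\tconv$ gives $\tconv\conv U\subseteq\tconv(\conv\tconv U)$.

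For the reverse inclusion the key observation is that $\tconv\conv U$ is simultaneously convex and tropically convex: it is convex by Corollary~\ref{cor: tconvPisconvex} applied to the convex set $P=\conv U$, and it is tropically convex because it is a tropical convex hull. Now $U\subseteq\conv U$ gives $\tconv U\subseteq\tconv\conv U$ by monotonicity; since $\tconv\conv U$ is convex and contains $\tconv U$, it contains the ordinary convex hull $\conv\tconv U$. Applying $\tconv$ to this inclusion and using that $\tconv$ fixes the tropically convex set $\tconv\conv U$, we obtain $\tconv(\conv\tconv U)\subseteq\tconv(\tconv\conv U)=\tconv\conv U$. Combining the two inclusions proves the statement.

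I do not expect any genuine obstacle here: the equality is a formal consequence of monotonicity of the hull operators plus the nontrivial input, already supplied by Corollary~\ref{cor: tconvPisconvex}, that the tropical convex hull of a convex set is again convex. The only point requiring a little care is bookkeeping the directions of the inclusions and making sure each operator is applied to a set in which it is monotone, which is routine.
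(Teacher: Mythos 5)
Your argument is correct and is essentially identical to the paper's own proof: both directions rest on monotonicity of the hull operators together with Corollary~\ref{cor: tconvPisconvex} showing $\tconv\conv U$ is convex, hence contains $\conv\tconv U$, and then applying $\tconv$ once more. You have merely spelled out the monotonicity steps that the paper leaves implicit.
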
 

\begin{proof}
The forward direction is immediate since $\conv U \subset \conv\tconv U.$
The containment $\tconv U \subseteq \tconv\conv U$ and Corollary~\ref{cor: tconvPisconvex} imply $\conv\tconv U \subseteq \tconv\conv U,$
so its tropical convex hull is also contained in $\tconv\conv U$.
\qedhere
\end{proof}

Let $a$ and $b$ be points in  $\mathbb R^n.$ For the remainder of the section we assume that 
    \begin{equation}\label{eq: a and b assumptions}
        a=(0,\ldots,0) \text{ and } 0 < b_1 < \cdots < b_n.
    \end{equation}
In this case, using \cite[Proposition 4]{D-S}, the tropical line segment  $\tconv(a,b)$ is a concatenation of line segments with $n+1$ \textit{pseudovertices} in $\mathbb R^n$ given by $p_0 = a$ and
\begin{equation}\label{eq: pseudovertices}
    p_j = (b_1,\ldots,b_{j-1},b_j,\ldots,b_j) \text{ for } j \in [n].
\end{equation}

If $a$ and $b$ do not satisfy (\ref{eq: a and b assumptions}), we can apply first a  linear transformation which translates $a$ to the origin and  then another that  relabels coordinates  so that  $0  \le b_1 \le \ldots \le b_n.$ 
If $b_i = b_j$ for some $i \neq j$ or $b_j=0$ for some $j$, then the pseudovertices of $\tconv(a,b)$ lie in the tropically convex hyperplane $x_i-x_j=0$ or $x_j=0$ and the same holds for $\conv\tconv(a,b)$ \cite[Theorem 2]{D-S}. Thus $\tconv\conv(a,b)$ and $\conv\tconv(a,b)$ lie in the hyperplane $x_i-x_j=0$ or $x_j=0$. Each of these hyperplanes  is isomorphic to $\mathbb R^{n-1}.$ We can repeat this process until the appropriate projection of $b$ has distinct positive coordinates.

The following theorem shows that 
the tropical convex hull and convex hull commute for two points in $\RR^n$ for all $n$.

\begin{thm}
\label{thm: PequalsT}
If  $a,b$ are points in $\mathbb R^n,$ then 
\begin{itemize}
\item[(i)] $\tconv \conv(a,b) = \conv \tconv(a,b);$

\item[(ii)] 
$\tconv \pos(a)=\pos \tconv(0,a).$ \end{itemize}
\end{thm}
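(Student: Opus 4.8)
The plan is to reduce to the normalized situation \eqref{eq: a and b assumptions}, where $a = 0$ and $0 < b_1 < \cdots < b_n$, since the reduction paragraph preceding the theorem handles the degenerate cases by projecting onto a coordinate hyperplane and inducting on $n$. In this normalized setting, I would attack part (i) by computing both sides explicitly. For $\conv\tconv(a,b)$, I would use that $\tconv(a,b)$ is the concatenation of line segments through the pseudovertices $p_0 = a, p_1, \ldots, p_n = b$ given by \eqref{eq: pseudovertices}; its ordinary convex hull is then $\conv(p_0, p_1, \ldots, p_n)$, a polytope whose vertices are among the pseudovertices. For $\tconv\conv(a,b) = \tconv(\text{segment } [a,b])$, I would apply Proposition~\ref{prop: tconv intersection}: $\tconv[a,b] = \bigcap_{j=0}^n ([a,b] + \mcS_j)$. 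The heart of the argument is to show these two descriptions coincide.

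The key computational step is to understand $\bigcap_{j=0}^n([a,b] + \mcS_j)$ concretely. Since $[a,b]$ is the segment $\{tb : t \in [0,1]\}$ and each $\mcS_j$ is a polyhedral cone, each Minkowski sum $[a,b] + \mcS_j$ is an (unbounded) polyhedron, and I would write down its defining inequalities using the description of $\mcS_j$ as $\pos(-e_i : i \in [n])$ for $j = 0$ and $\pos(e_0, -e_i : i \neq j)$ for $j \in [n]$. Intersecting over all $j$ produces an explicit inequality description of $\tconv[a,b]$. Separately, I would show the pseudovertex polytope $\conv(p_0, \ldots, p_n)$ satisfies exactly this inequality description — one containment being clear since each $p_j \in \tconv(a,b) \subseteq \tconv\conv(a,b)$ (hence $\conv(p_0,\ldots,p_n) \subseteq \tconv\conv(a,b)$ by Corollary~\ref{cor: tconvPisconvex}), and the reverse requiring that any point satisfying the inequalities is a convex combination of the $p_j$. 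Alternatively, and perhaps more cleanly, one can invoke Corollary~\ref{cor: convtconvVintconvP}: $\tconv\conv(a,b) = \tconv(\conv\tconv(a,b)) = \tconv(\conv(p_0,\ldots,p_n))$, and then it suffices to prove that $\conv(p_0, \ldots, p_n)$ is already tropically convex, so that it equals its own tropical convex hull and hence equals $\conv\tconv(a,b)$.

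Thus the crux reduces to: \emph{the ordinary polytope $\conv(p_0, p_1, \ldots, p_n)$ with $p_j = (b_1, \ldots, b_{j-1}, b_j, \ldots, b_j)$ is tropically convex.} This I would verify directly from the definition: take two points $x, y$ in the polytope, write them as convex combinations of the $p_j$, and check that $(\lambda \odot x) \oplus (\mu \odot y) = \min(\lambda + x, \mu + y)$ (coordinatewise, with $\min(\lambda,\mu) = 0$) again lies in the polytope. The structure of the $p_j$ — each coordinate is a ``truncation'' $p_{j,i} = b_{\min(i,j)}$, so the $p_j$ form a chain that is monotone and the polytope is a kind of order polytope / alcoved polytope — should make the coordinatewise minimum interact well with convex combinations; concretely, the defining inequalities of $\conv(p_0,\ldots,p_n)$ should be of the form $x_i - x_{i-1} \geq 0$ and $x_i - x_{i-1} \leq b_i - b_{i-1}$ (with $x_0 := 0$), all of which define tropically convex sets, and one checks $\min(\lambda + x, \mu + y)$ preserves each such inequality. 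The main obstacle I anticipate is bookkeeping: carefully tracking which pseudovertex achieves the minimum in each coordinate and confirming the convex-combination coefficients can be chosen consistently across all $n$ coordinates. For part (ii), I would argue that $\tconv$ commutes with the dilation action of $\RR_{>0}$ scaling (since tropical operations are compatible with positive scaling of points) and with taking closures of cones, so $\tconv\pos(a) = \tconv\big(\bigcup_{t \geq 0} t\cdot[0,a]\big) = \bigcup_{t\geq 0} t \cdot \tconv[0,a] = \pos\tconv(0,a)$, using part (i) (with $b = a$) or the pseudovertex description to identify $\tconv[0,a] = \tconv\conv(0,a)$ and the fact that $\tconv(0,a)$ is already a union of segments from the origin so its positive hull is a fan with rays the pseudovertex directions.
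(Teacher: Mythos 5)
Your overall architecture for part (i) is sound and genuinely different from the paper's. Both start from Corollary~\ref{cor: tconvPisconvex} for the containment $\conv\tconv(a,b)\subseteq\tconv\conv(a,b)$. For the reverse containment the paper works pointwise: it takes $x\in\tconv V$ for a finite $V\subset\conv(a,b)$, invokes the type/covector inequalities of Develin--Sturmfels to get $x_k-x_j\le\lambda_i(b_k-b_j)$, and checks these against an explicit $\mcH$-representation of the pseudovertex simplex (Lemmas~\ref{lem: H-rep} and~\ref{lem: VinP}). Your preferred route instead uses Corollary~\ref{cor: convtconvVintconvP} to reduce everything to the single claim that $\conv(p_0,\ldots,p_n)$ is tropically convex, to be verified facet by facet. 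This avoids types entirely and is arguably cleaner; it is essentially the easy direction of what later becomes Theorem~\ref{thm: tropconv polythedra}, and since you verify closedness under $(\lambda\odot x)\oplus(\mu\odot y)$ directly rather than citing results from Section~\ref{sec:higher-dim} (which depend on Corollary~\ref{cor: dim of line segment} and hence on this theorem), there is no circularity. Your part (ii) matches the paper's in spirit, though you should justify that the increasing union $\bigcup_{t>0}t\cdot\tconv[0,a]$ is itself tropically convex (it is, because the sets are nested).

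The concrete error is your guessed facet description. The inequalities $0\le x_i-x_{i-1}\le b_i-b_{i-1}$ (with $x_0:=0$) do \emph{not} cut out $\conv(p_0,\ldots,p_n)$: they define a strictly larger polytope with $2n$ facets, whereas $\conv(p_0,\ldots,p_n)$ is a simplex with $n+1$ facets. Already for $n=2$ with $b=(b_1,b_2)$, your box contains the point $(0,\,b_2-b_1)$, which violates the facet inequality $b_2x_1-b_1x_2\ge0$ of the triangle $\conv\bigl((0,0),(b_1,b_1),(b_1,b_2)\bigr)$. The correct $\mcH$-representation is the one in Lemma~\ref{lem: H-rep}, equivalently the chain $0\le\frac{x_n-x_{n-1}}{b_n-b_{n-1}}\le\cdots\le\frac{x_2-x_1}{b_2-b_1}\le\frac{x_1}{b_1}\le 1$; the "order polytope" intuition fails because the simplex constrains consecutive difference quotients against \emph{each other}, not each against fixed bounds, and its facet normals genuinely involve the values $b_j$. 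Fortunately your strategy survives the correction: each of these $n+1$ halfspaces has exactly one positive coefficient with nonnegative coefficient sum (or all coefficients nonpositive), and a direct check shows each is closed under $\min(\lambda+x,\mu+y)$, so the simplex is an intersection of tropically convex halfspaces and the argument closes. As written, though, proving your box is tropically convex would only give $\tconv\conv(a,b)$ is contained in the box, not the desired equality, so the step as stated does not establish the theorem.
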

Corollary~\ref{cor: tconvPisconvex} implies the forward containment of Theorem~\ref{thm: PequalsT}(i).
For the converse, we use an explicit description of $\conv\tconv(a,b)$ given in the following lemma. 

\begin{lem}
\label{lem: H-rep} If $a, b \in \mathbb R^n$ satisfy $a = (0,\ldots,0) \text{ and } 0 <b_1<\cdots<b_n,$
then $\conv \tconv(a,b)$ is a full-dimensional simplex whose $\mcH-$representation is given by
\begin{equation}
    \begin{split}
      \label{eq:HrepOfQ}
      b_1 - x_1 & \geq 0 \\
      -(b_{j+1}-b_j)x_{j-1}
      +(b_{j+1}-b_{j-1})x_j-(b_j-b_{j-1})x_{j+1} & \geq 0 \text{\hspace{1em} for $j \in [n-1].$}\\
      -x_{n-1}+x_{n} & \geq 0
    \end{split}
  \end{equation}
\end{lem}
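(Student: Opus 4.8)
The plan is to identify $\conv\tconv(a,b)$ as the ordinary convex hull of the $n+1$ pseudovertices $p_0,\ldots,p_n$ from~(\ref{eq: pseudovertices}), and then to verify directly that the polyhedron cut out by the inequalities in~(\ref{eq:HrepOfQ}) is exactly this simplex. Since $\tconv(a,b)$ is the concatenation of the segments $[p_{j-1},p_j]$, we have $\conv\tconv(a,b)=\conv(p_0,\ldots,p_n)$. So first I would check that $p_0,\ldots,p_n$ are affinely independent, hence their convex hull is a full-dimensional simplex in $\RR^n$; this is a quick determinant computation using the lower-triangular structure of~(\ref{eq: pseudovertices}) together with $0<b_1<\cdots<b_n$.

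Next I would observe that~(\ref{eq:HrepOfQ}) lists exactly $n+1$ inequalities, which is the right number of facets for an $n$-simplex, so it suffices to show (a) every pseudovertex $p_i$ satisfies all $n+1$ inequalities, and (b) for each inequality there are exactly $n$ of the pseudovertices at which it is tight (i.e. each facet hyperplane contains an $(n-1)$-face of the simplex). Then the polyhedron defined by~(\ref{eq:HrepOfQ}) is bounded (it is contained in the simplex by (a), or one notes the facet normals positively span $\RR^n$), and it has the same vertex set, so the two coincide. For (a): the first inequality $b_1-x_1\ge 0$ is satisfied by all $p_i$ since the first coordinate of every pseudovertex is either $0$ or $b_1$; the last inequality $x_n-x_{n-1}\ge 0$ holds since the last two coordinates of $p_i$ are equal for $i\le n-1$ and are $b_{n-1}<b_n$ for $i=n$; and for the middle inequality indexed by $j$, plugging in $p_i$ splits into the cases $i\le j-1$, $i=j$, and $i\ge j+1$, in each of which the left-hand side evaluates (after using~(\ref{eq: pseudovertices})) to something manifestly $\ge 0$ — in fact to a nonnegative multiple of a difference of consecutive $b$'s.

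For (b) I would record, for each inequality, which pseudovertices make it tight: $b_1-x_1=0$ holds for $p_1,\ldots,p_n$ (all but $p_0$); $x_n-x_{n-1}=0$ holds for $p_0,\ldots,p_{n-1}$ (all but $p_n$); and the $j$-th middle inequality becomes an equality precisely at all $p_i$ with $i\ne j$, which one sees from the case analysis in (a) since equality fails only in the $i=j$ case where the value is a strictly positive combination of $b_{j+1}-b_j$ and $b_j-b_{j-1}$. Thus each facet hyperplane contains exactly $n$ of the $n+1$ vertices, which is exactly the incidence structure of a simplex; combined with affine independence this forces the polyhedron~(\ref{eq:HrepOfQ}) to equal $\conv(p_0,\ldots,p_n)=\conv\tconv(a,b)$, and full-dimensionality follows from affine independence of the vertices.

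The main obstacle is purely computational: correctly evaluating the three-term middle inequality on $p_i$ in each of the three index ranges and checking that equality holds exactly off the diagonal $i=j$. The coefficients $-(b_{j+1}-b_j),\ (b_{j+1}-b_{j-1}),\ -(b_j-b_{j-1})$ are arranged so that they sum to zero, which already forces the inequality to be an equality whenever the relevant three coordinates $x_{j-1},x_j,x_{j+1}$ of $p_i$ are all equal — and by~(\ref{eq: pseudovertices}) this happens exactly when $i\le j-1$ (all equal to $b_i$, wait: to the appropriate small $b$) or $i\ge j+1$ (all equal to $b_j$), leaving only $i=j$; there one computes $x_{j-1}=b_{j-1}$, $x_j=x_{j+1}=b_j$ and the expression collapses to $(b_{j+1}-b_{j-1})b_j-(b_{j+1}-b_j)b_{j-1}-(b_j-b_{j-1})b_j = (b_j-b_{j-1})(b_{j+1}-b_j)>0$. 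Handling the boundary indices $j=1$ and $j=n-1$ (where $p_{j-1}=p_0=a$ enters, or $x_{j+1}=x_n$) needs a small separate check but follows the same pattern.
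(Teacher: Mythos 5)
Your overall strategy is sound and genuinely different from the paper's. The paper writes a general point of $\conv(p_0,\ldots,p_n)$ in barycentric form, $x_j=\lambda_1b_1+\cdots+\lambda_{j-1}b_{j-1}+(\lambda_j+\cdots+\lambda_n)b_j$, and evaluates each linear form on it, obtaining $(1-\sum\lambda_i)b_1$, $\lambda_j(b_{j-1}-b_j)(b_j-b_{j+1})$, and $\lambda_n(b_n-b_{n-1})$ respectively; this proves nonnegativity on the whole simplex and identifies each zero locus as a facet in a single computation. You instead evaluate only at the $n+1$ vertices and invoke the combinatorial fact that an $n$-simplex is the intersection of its $n+1$ facet halfspaces, each facet hyperplane being the affine span of $n$ of the affinely independent vertices. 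That route is valid and arguably cleaner, at the cost of having to check the vertex--facet incidence pattern exactly.

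There is, however, a concrete error in your tightness check for the middle inequalities. For $i\ge j+1$ the relevant coordinates of $p_i$ are \emph{not} all equal: by (\ref{eq: pseudovertices}) one has $(p_i)_{j-1}=b_{j-1}$, $(p_i)_j=b_j$, $(p_i)_{j+1}=b_{j+1}$, three distinct values, so the ``coefficients sum to zero'' argument does not apply there, and as written your case analysis only establishes equality for $i\le j-1$. The conclusion is still true, but for a different reason: the form $-(b_{j+1}-b_j)x_{j-1}+(b_{j+1}-b_{j-1})x_j-(b_j-b_{j-1})x_{j+1}$ also annihilates the configuration $(x_{j-1},x_j,x_{j+1})=(b_{j-1},b_j,b_{j+1})$, since it is (up to sign) the collinearity determinant of the three points $(b_{j-1},x_{j-1})$, $(b_j,x_j)$, $(b_{j+1},x_{j+1})$, and both the constant and the identity configurations are collinear; expanding gives $-(b_{j+1}-b_j)b_{j-1}+(b_{j+1}-b_{j-1})b_j-(b_j-b_{j-1})b_{j+1}=0$. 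With that one extra line your incidence count (each middle hyperplane contains all $p_i$ with $i\ne j$) is justified and the argument closes. A smaller point: your parenthetical that the polyhedron ``is contained in the simplex by (a)'' has the containment backwards --- (a) gives simplex $\subseteq$ polyhedron --- but this remark is not needed once all $n+1$ facet halfspaces are accounted for.
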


\begin{proof}
Observe that the vertices of $\conv \tconv(a,b)$ are the pseudovertices 
$p_0,\ldots,p_{n}$ of $\tconv(a,b)$ 
as described in (\ref{eq: pseudovertices}). 
These are $n+1$ affinely independent points of $\RR^n$ since the vectors $p_1-a=p_1,\ldots,p_{n-1}-a=p_{n-1},b-a=b$ are linearly independent. This implies  $\conv \tconv(a,b)$ is a simplex. Hence, each of its $n+1$ facets is the convex hull of $n$ vertices.  
To show that (\ref{eq:HrepOfQ}) is the $\mcH-$representation of $\conv \tconv(a,b)$ we will show that the corresponding equation of each one  of the $n+1$ inequalities is the hyperplane supporting one of the facets of $\conv \tconv(a,b).$

Let $x=(x_1,\ldots,x_n)$ be a point in $ \conv \tconv(a,b) =  \conv(a,p_1,\allowbreak \ldots,p_{n-1},b).$  The $j$th coordinate of $x$ is given by
$$
    x_j = \lambda_1b_1+\ldots+\lambda_{j-1}b_{j-1}+\left(\lambda_j+\lambda_{j+1}+\ldots +\lambda_n\right)b_j
$$
where $\lambda_1+\ldots+\lambda_n\le 1$ and $\lambda_i\ge 0$ for every $i$.
Substituting the coordinates of $x$ into the first linear form of (\ref{eq:HrepOfQ}) we obtain
$(1-\lambda_1-\dots-\lambda_n)b_1$. Since   $\lambda_1+\ldots+\lambda_n\le 1$ and $b_1\ge 0$ it follows that $b_1-x_1\ge 0$.
Note that  equality occurs if and only if 
$x$ is in the facet $ \conv(p_1,\ldots,p_{n-1},b)$. Thus, $b_1-x_1=0$ defines this facet of  $\conv \tconv(a,b)$,
that is $\{b_1-x_1=0\}\cap \conv\tconv(a,b)=\conv(p_1,\ldots,p_{n-1},b)$.

After substituting into the second linear form of (\ref{eq:HrepOfQ}) we have that 
$$
    -(b_{j+1}-b_j)x_{j-1}+(b_{j+1}-b_{j-1})x_j-(b_j-b_{j-1})x_{j+1}  = \lambda_j(b_{j-1}-b_j)(b_{j}-b_{j+1}).
$$
\noindent Since $\lambda_j\ge 0$ and  $b_j \ge b_{j-1}$ for each $j$, we know $x$ satisfies the second inequality. Here equality occurs if and only if $x$ is in the facet $ \conv(a,p_1,\ldots,\allowbreak p_{j-1}, p_{j+1}, \ldots, p_{n-1},b),$ so  
\[-(b_{j+1}-b_j)x_{j-1}+(b_{j+1}-b_{j-1})x_j-(b_j-b_{j-1})x_{j+1} = 0\] defines this facet of $\conv \tconv\allowbreak(a,b)$ for each $j\in[n-1].$

Lastly, we have that $-x_{n-1}+x_n = \lambda_n(b_n-b_{n-1}) \ge 0$.
Equality holds if and only if $x$ is in the facet $\conv(a, p_1,\ldots,p_{n-1})$, and hence this facet is defined by $-x_{n-1}+x_n=0$.
\end{proof}

\begin{lem}
\label{lem: VinP} 
If $a, b \in \mathbb R^n$ and $V$ is a finite subset of $\conv(a,b)$, then \[\tconv(V)\subset\conv \tconv(a,b).\]
\end{lem}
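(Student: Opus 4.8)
The plan is to reduce to the normalized situation \eqref{eq: a and b assumptions}, since the opening discussion of the section explains how a linear (coordinate-permuting and translating) change of variables reduces the general case to $a=(0,\ldots,0)$ and $0<b_1<\cdots<b_n$, and both $\tconv$ and $\conv$ are equivariant under such transformations. So assume $a,b$ are in this normal form, and let $Q=\conv\tconv(a,b)$, whose $\mcH$-representation is given explicitly by Lemma~\ref{lem: H-rep}. A finite $V\subset\conv(a,b)$ means every $v\in V$ has the form $v=(1-t)a+tb=tb$ for some $t\in[0,1]$. I want to show $\tconv(V)\subset Q$.

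The key step is to use the intersection description of tropical convex hull from Proposition~\ref{prop: tconv intersection}: $\tconv(V)=\bigcap_{j=0}^n (V+\mcS_j)$. Since $Q$ is convex (indeed a simplex) it suffices to show each generator $v\in V$ lies in $Q$ — no, that only gives $\conv(V)\subset Q$, not $\tconv(V)\subset Q$. Instead, the cleanest route is: $Q$ is itself tropically convex? Not obviously. So I would argue directly that $\bigcap_{j=0}^n(V+\mcS_j)\subset Q$ by checking that each of the $n+1$ half-space inequalities of \eqref{eq:HrepOfQ} is respected. Concretely, fix a point $x\in\bigcap_j(V+\mcS_j)$. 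For each $j\in\{0,1,\ldots,n\}$ there is $v^{(j)}\in V$ with $x\in v^{(j)}+\mcS_j$, i.e. $x-v^{(j)}$ lies in the sector $\mcS_j$. Writing $v^{(j)}=t_j b$, the sector membership translates into a system of coordinatewise inequalities on $x$ relative to $t_j b$. I then want to feed these into the facet inequalities of $Q$. The $i$-th facet inequality of $Q$ (from Lemma~\ref{lem: H-rep}) is a linear form that, evaluated on the pseudovertex $p_k$, vanishes except for one value of $k$; so I expect each facet inequality of $Q$ to be controlled by sector membership at the correspondingly-indexed sector. The bookkeeping is to match up: the inequality $b_1-x_1\ge 0$ should follow from $x\in v^{(0)}+\mcS_0$ (so all coordinates of $x-v^{(0)}$ are $\le 0$, giving $x_1\le (v^{(0)})_1=t_0 b_1\le b_1$); the inequality $-x_{n-1}+x_n\ge 0$ should follow from membership in the appropriate sector; and the middle inequalities for $j\in[n-1]$ should follow from $x\in v^{(j)}+\mcS_j$, where $\mcS_j$ is spanned by $\{e_0,-e_i:i\ne j\}$, meaning $x_k-x_j\le (v^{(j)})_k-(v^{(j)})_j=t_j(b_k-b_j)$ for all $k\ne j$. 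Plugging $v^{(j)}=t_jb$ into the linear form $-(b_{j+1}-b_j)x_{j-1}+(b_{j+1}-b_{j-1})x_j-(b_j-b_{j-1})x_{j+1}$ and using these two inequalities (for $k=j-1$ and $k=j+1$, with the correct signs accounting for $b_{j-1}<b_j<b_{j+1}$) should yield $\ge 0$.

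An alternative, possibly slicker, organization: since $\tconv(V)=\bigcup_{W\subset V\text{ finite}}$ is already finite here, and by \cite[Definition 2.1]{G-K} every point of $\tconv(V)$ is a tropical linear combination $\bigoplus_i a_i\odot v_i$ of points $v_i=t_ib\in\conv(a,b)$; one can try to show such a point lies in $Q=\conv(a,p_1,\ldots,p_{n-1},b)$ by exhibiting explicit convex-combination coefficients $\lambda_0,\ldots,\lambda_n\ge 0$ with $\sum\lambda_k\le 1$ (the last slack corresponding to the vertex $a$). Because the tropical combination partitions coordinates into blocks on which a single $v_i$ achieves the minimum, this should decompose $x$ as a nonnegative combination of the $p_k$'s in a fairly transparent way. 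I would likely present whichever of these two is shorter, but I lean toward the first (the $\bigcap(V+\mcS_j)$ route) because Proposition~\ref{prop: tconv intersection} is already set up and it localizes each facet inequality to a single sector condition.

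The main obstacle I anticipate is the index-matching in the middle inequalities: verifying that the sign pattern in the linear form $-(b_{j+1}-b_j)x_{j-1}+(b_{j+1}-b_{j-1})x_j-(b_j-b_{j-1})x_{j+1}$ exactly absorbs the two sector inequalities $x_{j-1}-x_j\le t_j(b_{j-1}-b_j)$ and $x_{j+1}-x_j\le t_j(b_{j+1}-b_j)$ with the right nonnegative weights $(b_j-b_{j-1})$ and $(b_{j+1}-b_j)$ respectively, and that the $t_j$-dependent remainder is also nonnegative (it should work out to a multiple of $t_j(b_{j+1}-b_j)(b_j-b_{j-1})\ge 0$, paralleling the computation $\lambda_j(b_{j-1}-b_j)(b_j-b_{j+1})$ already appearing in the proof of Lemma~\ref{lem: H-rep}). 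Handling the boundary cases $j=1$ (where $x_{j-1}=x_0$ is not a coordinate, and instead the first inequality $b_1-x_1\ge0$ takes over) and $j=n$ (the last inequality) requires treating $\mcS_0$ and one endpoint sector separately, as sketched above. Once the correspondence "sector $j$ $\leftrightarrow$ facet inequality $j$" is pinned down, each verification is a one-line linear computation.
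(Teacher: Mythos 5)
Your first route is essentially the paper's own proof: the paper also reduces to the normal form, invokes the facet inequalities of Lemma~\ref{lem: H-rep}, and derives each one from the sector-membership inequalities $x_k-x_j\le \lambda_i(b_k-b_j)$ (phrased there via types and \cite[Lemma~10]{D-S}, which is equivalent to your $x\in v^{(j)}+\mcS_j$ formulation). The cancellation you anticipate in the middle inequalities does work out exactly as you predict (the two sector bounds from $\mcS_j$ contribute remainders $\pm t_j(b_{j+1}-b_j)(b_j-b_{j-1})$ that cancel), so the proposal is correct.
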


\begin{proof}
Without loss of generality, assume $a = (0,\ldots,0)$ and $0<b_1<\ldots<b_n.$ Let $V~=~\{\lambda_1b,\lambda_2b,\ldots,\lambda_rb\} \subset \conv(a,b)$ for some parameters $\lambda_i \in [0,1]$. Assume the parameters are ordered $0 \leq \lambda_1 \leq \lambda_2 \leq\ldots\leq \lambda_r \leq 1$. Take $x \in \tconv V$ and let $T_x$ be the type of $x$ relative to $V.$ 
By \cite[Lemma~10]{D-S}, 
the point $x$ satisfies 
    \begin{equation}\label{eq: xinXS}
       x_k-x_j \leq \lambda_i(b_k-b_j)\text{ for $j,k \in [n]$ with $i \in T_j$.}
    \end{equation}  
    We will show that $x$ satisfies the $\mcH$-representation of $\conv \tconv(a,b)$ given in Lemma~\ref{lem: H-rep}.  Since 
    the union of all coordinates $T_j$ of $T_x$ covers $[r],$
    (\ref{eq: xinXS}) implies that 
$$ 0 \leq \dfrac{x_{j+1}-x_j}{b_{j+1}-b_j} \leq \dfrac{x_{j}-x_{j-1}}{b_{j}-b_{j-1}} \leq 1 \text{\hspace{1em} for all $j \in [n-1].$}$$ 
For $j=1$, this implies $\dfrac{x_{1}}{b_{1}} \leq 1$, so $b_1-x_1 \geq 0$. 
For $j \in [n-1]$, rewriting the inequality $\dfrac{x_{j+1}-x_j}{b_{j+1}-b_j} \leq \dfrac{x_{j}-x_{j-1}}{b_{j}-b_{j-1}}$ shows that $-(b_{j+1}-b_j)x_{j-1}     +(b_{j+1}-b_{j-1})x_j-(b_j-b_{j-1})x_{j+1} \geq 0.$
Lastly, if $j = n-1$, then $0 \leq \dfrac{x_n-x_{n-1}}{b_n-b_{n-1}}$, so $-x_{n-1}+x_n \geq 0.$
\end{proof}   

\begin{proof}[Proof of Theorem~\ref{thm: PequalsT}]
For part (i), assume without loss of generality that
$a=(0,\ldots,0)$ and $0<b_1<\cdots<b_n$. 
Corollary~\ref{cor: tconvPisconvex} and the containment $\tconv(a,b) \subset \tconv \conv(a,b)$ imply that $\conv \tconv(a,b) \subseteq \tconv\conv(a,b)$. Now take $x \in \tconv \conv(a,b)$. Since the tropical convex hull of a set is the union of the tropical convex hulls of all its subsets, it follows that
there is a finite set $V \subset \conv(a,b)$ such that $ x \in \tconv(V).$ Lemma~\ref{lem: VinP} implies $\tconv(V) \subset \conv\tconv(a,b)$, so $x\in \conv\tconv(a,b)$. 

To show part (ii), take $x \in \tconv\pos(a).$ There exist scalars $\lambda_0,\ldots,\lambda_n \ge 0$ such that $\lambda_j a \in \pos(a)$ for each $j \in [n]_0$ and $x \in \tconv(0, \lambda_0 a,\allowbreak \ldots, \lambda_n a)$. Assume the scalars are ordered $\lambda_0\le \lambda_1\le \lambda_2 \le \ldots \le \lambda_n$ so $x \in 
\tconv \conv(0, \lambda_n a).$
 By Theorem~\ref{thm: PequalsT}$(i)$ it follows that $x~\in~\conv\tconv(0,\lambda_n a)$. Furthermore, this means $x \in \pos\tconv(0,\lambda_n a).$ The pseudovertices of $\tconv(0,\lambda_n a)$ and $\tconv(0,a)$ are scalar multiples of one another meaning $x \in \pos\tconv(0, a).$  The other inclusion
 $\pos \tconv(0,a)\subset \tconv\pos(0,a)$ follows from Corollary~\ref{cor: tconvPisconvex}. 
\end{proof}

\begin{cor}\label{cor: dim of line segment}
If  $a$ and $b$ are points in $\mathbb R^n$, then 
\begin{itemize}
\item[(i)]  $\dim \tconv\conv(a,b)$ is  the number of nonzero distinct coordinates of $a-b$;
\item[(ii)]  $\dim \tconv\pos(a)$ is  the number of nonzero distinct coordinates of $a$.
\end{itemize}
\end{cor}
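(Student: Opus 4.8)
The strategy is to reduce to the normalized situation and then count dimensions directly from the $\mathcal{H}$-representation in Lemma~\ref{lem: H-rep}. First I would observe that both $\dim\tconv\conv(a,b)$ and "the number of nonzero distinct coordinates of $a-b$" are invariant under the linear transformations described after (\ref{eq: a and b assumptions}): translating $a$ to the origin changes $a-b$ to $-b$ (same nonzero distinct coordinate count) and does not change the dimension of a set; relabeling coordinates permutes the entries of $a-b$ (same count) and is a linear isomorphism. So without loss of generality $a=(0,\ldots,0)$ and $0\le b_1\le\cdots\le b_n$, and the claimed quantity in (i) is the number of distinct values among the strictly positive $b_i$.

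Next I would handle the degenerate coordinates. As noted in the paragraph preceding Theorem~\ref{thm: PequalsT}, whenever $b_i=b_j$ for $i\ne j$ or $b_j=0$, the set $\tconv\conv(a,b)$ lies in a hyperplane $\{x_i-x_j=0\}$ or $\{x_j=0\}$, each isomorphic to $\RR^{n-1}$, and the process can be iterated. After performing all such reductions we are left with a point $b'\in\RR^m$ (for $m$ equal to the number of distinct nonzero coordinates of $b$) whose coordinates satisfy $0<b'_1<\cdots<b'_m$ strictly. Here Theorem~\ref{thm: PequalsT}(i) gives $\tconv\conv(a',b')=\conv\tconv(a',b')$, and Lemma~\ref{lem: H-rep} identifies this as a \emph{full-dimensional} simplex in $\RR^m$; hence its dimension is $m$, which is exactly the number of nonzero distinct coordinates of $a-b$. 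The only thing to check carefully is that each reduction step drops the ambient dimension by exactly one and drops the "distinct nonzero coordinate" count by exactly one as well, so that the two sides of the claimed equality stay in lockstep; this is a bookkeeping argument about how identifying two equal coordinates (or a zero coordinate) with the hyperplane $\RR^{n-1}$ interacts with the list of values $b_i$.

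For part (ii), I would run the same reduction on $\pos(a)$: by Theorem~\ref{thm: PequalsT}(ii), $\tconv\pos(a)=\pos\tconv(0,a)$, and $\pos$ of a set of dimension $d$ through (or containing) the origin has dimension $d$ or $d+1$ — but since $\tconv(0,a)$ already contains $0$, one gets $\dim\pos\tconv(0,a)=\dim\conv\tconv(0,a)$ when $0$ is in the relative interior direction, and in general $\dim\pos\tconv(0,a)=\dim\tconv(0,a)$ because $\tconv(0,a)$ is a concatenation of segments emanating from $0$, so the cone over it has the same dimension as its affine hull. Then part (i), applied to the pair $0$ and $a$, says $\dim\conv\tconv(0,a)=\dim\tconv\conv(0,a)$ equals the number of nonzero distinct coordinates of $a$, and $\dim\tconv\pos(a)=\dim\pos\tconv(0,a)$ equals this same number.

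The main obstacle I anticipate is not any single hard computation but making the inductive reduction rigorous: one must verify that passing to the hyperplane $\{x_i=x_j\}$ or $\{x_j=0\}$ genuinely realizes $\tconv\conv(a,b)$ inside an $\RR^{n-1}$ in a way compatible with taking tropical and ordinary convex hulls, so that the claimed equality for the smaller instance implies it for the larger one. Once that compatibility is in hand, the base case is immediate from Lemma~\ref{lem: H-rep}, and part (ii) follows formally from part (i) together with Theorem~\ref{thm: PequalsT}(ii).
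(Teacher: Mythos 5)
Your proposal is correct and takes essentially the same route as the paper: part (i) is read off from Lemma~\ref{lem: H-rep}, which (after the coordinate reduction described before Theorem~\ref{thm: PequalsT}) exhibits $\tconv\conv(a,b)$ as a full-dimensional simplex in $\RR^d$ with $d$ the number of nonzero distinct coordinates of $a-b$, and part (ii) follows because $\pos\tconv(0,a)$ is generated by the pseudovertices of $\tconv(0,a)$, which are the vertices of that simplex. (One small slip: in part (ii) you write $\dim\pos\tconv(0,a)=\dim\tconv(0,a)$, where the one-dimensional set $\tconv(0,a)$ should be replaced by its affine hull, as your own next clause indicates.)
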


\begin{proof}
Part (i) follows from  the proof of Lemma~\ref{lem: H-rep} since $\tconv\conv(a,b)$ is a full-dimensional simplex in $\RR^d$ where $d$ is  the number of nonzero distinct coordinates in $a-b$. 
For part (ii) observe that the generators of $\pos\tconv(0,a)$  are the pseudovertices of $\tconv(0,a)$ which are vertices of $\tconv \conv(0,a)$.
\end{proof}

As a consequence of Corollary~\ref{cor: dim of line segment}  we have the following result for tropically convex fans. An application of this lemma appears in Section~\ref{sec: tropC-2}.

\begin{lem} \label{lem: fan-diffCoord}
If $F$ is a tropically convex fan in $\mathbb R^{n}$, then $\dim F$ is equal to the maximum number of nonzero distinct coordinates of a point in $F.$
\end{lem}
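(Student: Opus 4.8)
The plan is to prove the two inequalities separately. For the easy direction, I would argue that $\dim F$ is at least the maximum number of nonzero distinct coordinates of any point in $F$. Pick $v \in F$ realizing this maximum, say $v$ has $d$ nonzero distinct coordinates. Since $F$ is a fan, $\pos(v) \subseteq F$, and since $F$ is tropically convex, $\tconv\pos(v) \subseteq F$. By Corollary~\ref{cor: dim of line segment}(ii), $\dim\tconv\pos(v) = d$, so $\dim F \geq d$.

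For the reverse inequality, I would suppose $\dim F = m$ and produce a point of $F$ with at least $m$ nonzero distinct coordinates. Since $F$ is a fan of dimension $m$, it contains an $m$-dimensional cone $\sigma$, hence $m$ linearly independent rays $\pos(u_1),\ldots,\pos(u_m) \subseteq F$. Because $F$ is tropically convex, $\tconv\pos(u_1,\ldots,u_m) \subseteq F$, and this tropical convex hull has dimension at least $m$ (it contains the $m$ linearly independent rays, but one must check taking tropical hull does not lose dimension here — it does not, since it contains $\conv$ of the rays which is already $m$-dimensional after translating, or more carefully, it contains the classical cone they span). The key point is then: a tropically convex set of dimension $m$ must contain a point with at least $m$ nonzero distinct coordinates. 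I would prove the contrapositive: if every point of a tropically convex set $C$ has at most $m-1$ nonzero distinct coordinates, then $\dim C \leq m-1$. Indeed, if all points of $C$ have at most $m-1$ distinct coordinates, then $C$ is contained in the union of the finitely many linear subspaces $\{x_i = x_j\}$ and $\{x_i = 0\}$; but a single point $v \in C$ with exactly $d$ nonzero distinct coordinates, together with tropical convexity, forces $\tconv\pos(v) \subseteq C$ to be a $d$-dimensional simplex-cone, and I claim an irreducible tropically convex set cannot be spread across several of these hyperplanes without one of them containing a relatively open neighborhood — more cleanly, since $F$ is a fan one can pass to a top-dimensional cone $\sigma$, which is a classical convex cone of dimension $m$, and its relative interior is an open subset of an $m$-plane; if $\sigma \subseteq \{x_i = x_j\}$ or $\{x_i=0\}$ for a single such hyperplane, then every point of $\sigma$ has at most $m$ coordinates with at most one coincidence...

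Let me restructure the hard direction more carefully. Since $F$ is a fan with $\dim F = m$, fix a top-dimensional cone $\sigma \subseteq F$; its relative interior is open in $\Span(\sigma) \cong \RR^m$. For a point $x$ in general position inside $\sigma$ (avoiding the finitely many hyperplanes $\{x_i = x_j\}$ for $i \neq j$ and $\{x_i = 0\}$ that do not contain $\Span(\sigma)$), the number of nonzero distinct coordinates of $x$ equals $n$ minus the number of independent relations among $\{x_i = x_j, x_i = 0\}$ that hold identically on $\Span(\sigma)$. Such a maximal-coordinate point exists in $\sigma$ precisely because $\sigma$ has a nonempty relative interior of dimension $m$, and the number of nonzero distinct coordinates it attains is $n - r$ where $r$ is the number of these defining relations forced on $\Span\sigma$; since $\dim\Span\sigma = m \leq n - r$, such a generic point has at least $m$ nonzero distinct coordinates. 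Combined with the first paragraph, this yields equality.

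The main obstacle is the reverse inequality, specifically showing a generic point of a top-dimensional cone of $F$ really achieves at least $\dim F$ nonzero distinct coordinates; this is where fan-ness (giving a full-dimensional cone with relative interior, rather than just an abstract $m$-dimensional tropically convex set) is essential, and one must carefully count how the coordinate-collision hyperplanes intersect $\Span(\sigma)$. I expect the cleanest writeup uses Corollary~\ref{cor: dim of line segment} on a single well-chosen point $v$ in the relative interior of a top cone, noting $\tconv\pos(v) \subseteq F$ has dimension equal to the number of nonzero distinct coordinates of $v$, and then arguing that the relative interior being $m$-dimensional forces that count to be $\geq m$.
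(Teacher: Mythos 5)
Your proposal is correct and follows essentially the same route as the paper: the lower bound $\dim F \geq d$ via $\tconv\pos(v)\subseteq F$ and Corollary~\ref{cor: dim of line segment} is identical, and your genericity argument on the relative interior of a top-dimensional cone is just the contrapositive of the paper's argument, which notes that if every point of a top-dimensional cone $C$ had at most $d<\dim C$ nonzero distinct coordinates then $C$ would be covered by finitely many linear subspaces of dimension at most $d$, a contradiction. The abandoned first attempt at the reverse inequality can simply be deleted; your restructured final paragraph is the proof.
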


\begin{proof}  
Let $d$ be the maximum number of nonzero distinct coordinates of any point in $F$, and let $x$ be one such point in $F.$
Since $F$ is a tropically convex fan it contains $\tconv\pos(x).$ Corollary~\ref{cor: dim of line segment} implies  that $\dim\tconv \pos(x)= d$, hence $\dim \ F \geq d.$
Suppose that $\dim F > d$. Let $C$ be a cone contained in $F$ such that $\dim C = \dim F$. By hypothesis, each point in $C$ has at most $d$ nonzero distinct coordinates. This implies that $C$ is contained in the union of finitely many linear spaces in $\mathbb R^n$ of dimension at most $d.$ This contradicts the  assumption that $\dim C=\dim F >d.$
\end{proof}

Now we consider arbitrary sets in $\mathbb R^2$ 
and give a generalization of
Theorem~\ref{thm: PequalsT}.

\begin{lem}
\label{lemma: convTconv in 2D}
    If $V \subset \mathbb R^2$ is finite, then
    $
    \tconv \conv V = \conv \tconv V.
    $
\end{lem}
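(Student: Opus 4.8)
The plan is to reduce the two-dimensional statement to the two-point case already established in Theorem~\ref{thm: PequalsT}(i), together with the structural description of tropical convex hulls from Proposition~\ref{prop: tconv intersection}. The forward containment $\conv\tconv V \subseteq \tconv\conv V$ is free: since $V \subseteq \conv V$ we have $\tconv V \subseteq \tconv\conv V$, and $\tconv\conv V$ is convex by Corollary~\ref{cor: tconvPisconvex}, hence it contains $\conv\tconv V$. So the real content is the reverse inclusion $\tconv\conv V \subseteq \conv\tconv V$.

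For the reverse inclusion, I would take a point $x \in \tconv\conv V$ and, using \eqref{def: tconvInf} (the tropical convex hull is the union of tropical convex hulls of finite subsets), find a finite set $W \subset \conv V$ with $x \in \tconv W$. Each point $w \in W$ is a convex combination of points of $V$; in $\RR^2$ one can hope to use a Carath\'eodory-type argument so that each $w$ lies in $\conv(v, v')$ for some pair $v, v' \in V$. Then by Theorem~\ref{thm: PequalsT}(i), $w \in \conv(v,v') \subseteq \conv\tconv(v,v') \subseteq \conv\tconv V$. This shows $W \subseteq \conv\tconv V$, so $\tconv W \subseteq \tconv\conv\tconv V$. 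To close the loop I would invoke Corollary~\ref{cor: tconvPisconvex} again: $\tconv V$ is contained in the convex set $\tconv\conv V$, but more to the point, I want $\tconv(\conv\tconv V) = \conv\tconv V$. This last equality is not quite immediate from the stated results in full generality, so the cleanest route is: $\tconv W \subseteq \tconv(\conv\tconv V)$, and since $\tconv V$ is contained in some set whose convex hull is tropically convex — in fact by Corollary~\ref{cor: convtconvVintconvP} applied suitably — we would need $\conv\tconv V$ itself to be tropically convex. In $\RR^2$ this is where the low-dimensionality is essential.

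The main obstacle, therefore, is showing that $\conv\tconv V$ is tropically convex when $V \subset \RR^2$ is finite; equivalently, $\conv\tconv V = \tconv\conv\tconv V$. Here I would use Proposition~\ref{prop: tconv intersection}: $\tconv S = \bigcap_{j=0}^{2}(S + \mcS_j)$ for any $S \subset \RR^2$. The sectors $\mcS_0, \mcS_1, \mcS_2$ in $\RR^2$ are two-dimensional cones (quadrant-like regions bounded by the max-tropical line), and the key planar fact is that the Minkowski sum of a convex set with a convex cone is convex, while $\tconv\conv V$ being convex (Corollary~\ref{cor: tconvPisconvex}) means $\conv\tconv V = \tconv\conv V$ already — wait, that would finish it immediately. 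Let me instead argue directly: $\conv\tconv V \subseteq \tconv\conv V$ follows because $\tconv V \subseteq \tconv\conv V$ and the right side is convex; combined with the forward containment above, $\conv\tconv V = \tconv\conv V$, and the latter is manifestly of the form $\tconv(\text{something})$, hence tropically convex. So in fact the real work collapses to the Carath\'eodory step: every point of $\conv V$ in $\RR^2$ lies on a segment between two points of $V$ — which is false in general (a point in the interior of a triangle needs three vertices).

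So the honest main obstacle is handling interior points of $\conv V$ that are genuine convex combinations of three or more points of $V$. To deal with this I would argue by induction on $|V|$, or more cleverly: write $x \in \tconv W$ with $W \subset \conv V$ finite, and observe that $\tconv W \subseteq \tconv\conv V$ trivially, so what I actually need is $\tconv\conv V \subseteq \conv\tconv V$ and for that it suffices to show $\conv\tconv V$ is tropically convex and contains $V$. It contains $V$ since $V \subseteq \tconv V \subseteq \conv\tconv V$. For tropical convexity of $\conv\tconv V$: take $y, z \in \conv\tconv V$ and scalars $a \oplus b = 0$; write $y = \sum \mu_i y_i$, $z = \sum \nu_k z_k$ with $y_i, z_k \in \tconv V$. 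The tropical combination $(a \odot y) \oplus (b \odot z)$ must be shown to lie in $\conv\tconv V$. In $\RR^2$, I would exploit that $\tconv V$ is itself a finite union of ordinary polytopes (polytropes) by \cite{D-S}, that $\conv\tconv V$ is then an ordinary polygon, and that an ordinary polygon is tropically convex iff each of its edges satisfies the tropical half-plane condition (a forward reference in spirit to Theorem~\ref{thm: tropconv polythedra}, but provable directly in $2$D from Proposition~\ref{prop: tconv intersection}: $\conv\tconv V = \bigcap_j(\tconv V + \mcS_j)$-style reasoning). Concretely I would check that $\conv\tconv V$ equals $\bigcap_{j=0}^2(\conv\tconv V + \mcS_j)$ by showing each bounding edge of the polygon $\conv\tconv V$ has an inner normal lying in the appropriate range — using that the extreme points of $\conv\tconv V$ are pseudovertices of tropical segments $\tconv(v,v')$ and that between consecutive such extreme points the edge direction is controlled. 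I expect this edge-normal verification to be the delicate computational heart of the argument, and the place where working in $\RR^2$ rather than $\RR^n$ is indispensable.
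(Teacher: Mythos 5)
Your reduction is sound up to a point: it does suffice to show that $\conv\tconv V$ is tropically convex, since it is a convex set containing $V$, hence contains $\conv V$, hence would contain $\tconv\conv V$; and the further reduction to ``every edge of the polygon $\conv\tconv V$ supports a tropically convex halfplane'' is also legitimate, because an intersection of tropically convex sets is tropically convex. But the proposal stops exactly where the mathematical content begins: the edge-normal verification that you defer as ``the delicate computational heart'' \emph{is} the lemma, and nothing you wrote establishes it. (Also, the intermediate paragraph in which you conclude $\conv\tconv V=\tconv\conv V$ by ``combining'' two containments is circular --- both containments you actually prove there are the same inclusion $\conv\tconv V\subseteq\tconv\conv V$.) As written this is a plan with an acknowledged hole, not a proof.

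The hole is closable along your lines, and the closing argument is short, so it is worth recording. Since $V$ is finite, $\tconv V$ is a compact finite union of ordinary polytopes, so the vertices of $\conv\tconv V$ lie in $\tconv V$; hence the endpoints $y\neq z$ of any edge satisfy $\tconv(y,z)\subseteq\tconv V\subseteq\conv\tconv V\subseteq\mcH$, where $\mcH=\{a_1x_1+a_2x_2\ge c\}$ is the supporting halfplane along that edge. If $a_1,a_2>0$, then (with $y_1<z_1$, hence $y_2>z_2$) the point $y\oplus z=(y_1,z_2)\in\tconv(y,z)$ violates the inequality. If $a_1>0>a_2$ and $a_1+a_2<0$, then $y<z$ coordinatewise, and the pseudovertex $y+\alpha^*e_0$ of $\tconv(y,z)$ with $\alpha^*=\min_i(z_i-y_i)>0$ satisfies $a\cdot(y+\alpha^*e_0)=c+\alpha^*(a_1+a_2)<c$; the case $a_2>0>a_1$, $a_1+a_2<0$ is symmetric. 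These are exactly the normals excluded by Proposition~\ref{prop:classifyHalfspaces}, so every edge of $\conv\tconv V$ is tropically convex and the lemma follows. Note that the paper argues in the opposite direction: it writes $\tconv\conv V=\bigcap_{j=0}^2(\conv V+\mcS_j)$ via Proposition~\ref{prop: tconv intersection} and shows every vertex of this intersection is either a vertex of $\conv V$ or a pseudovertex of some $\tconv(v,w)$ with $v,w\in V$, hence lies in $\tconv V$. Your route analyzes the boundary of the smaller set $\conv\tconv V$ rather than of the larger one; either works, but only once the boundary analysis is actually carried out.
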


\begin{proof}
We prove the lemma by showing that each vertex  of $  \tconv \conv V $ is either a point in $V$ or a pseudovertex of $\tconv V$. 

By Proposition~\ref{prop: tconv intersection} we know $\tconv \conv V = \bigcap_{j=0}^2(\mcS_j+\conv V)$.
A face of a Minkowski sum of polyhedra is a Minkowski sum of a face from each summand. Since $\mcS_j$ has only one vertex, namely the origin, it follows that the vertices of $\mcS_j+\conv V$ are precisely the vertices of $\conv V$. The facets of $\mcS_j+\conv V$  arise as either the sum of the vertex of $\mcS_j$ and an edge of $\conv V$, or as the sum of a vertex of $\conv V$ and a ray of $\mcS_j$. In the former case, these are simply the edges of $\conv V$. In the latter case, these are the unbounded edges parallel to a ray of $\mcS_j$ and the vertex of each of them is   a vertex $v\in V$. 

From this description of the facets and vertices of $\mcS_j+\conv V$ we deduce that a vertex of $\tconv \conv V$ is either a vertex of $\conv V$ or it is the intersection of a facet of $\mcS_i+\conv V$ and a facet of $\mcS_j+\conv V$ for some $i,j\in[2]_0$.  
Suppose that neither of the facets is an edge of $\conv V$ (Otherwise we would get a vertex of $\conv V$.), then the intersection point is a pseudovertex of $\tconv(v,w)$ and a vertex of $\conv \tconv V$. Suppose that only one of the facets is an edge of $\conv V$. This intersection point must be a vertex of $\conv V$. Otherwise it is in the interior of the edge of $\conv V$, which implies that the ray intersecting the edge also intersects the interior of $\conv V$ and hence is not a facet.  
\end{proof}

\begin{thm}\label{thm: tconv commute in R2}
If $U\subset \RR^2$, then $\tconv \conv U = \conv \tconv U$. 
\end{thm}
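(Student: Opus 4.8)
The plan is to reduce the general statement for an arbitrary set $U \subset \RR^2$ to the finite case already established in Lemma~\ref{lemma: convTconv in 2D}, using the limiting description of the tropical convex hull in (\ref{def: tconvInf}). First I would dispatch the easy inclusion: since $\conv U \subset \conv\tconv U$ and $\conv\tconv U$ is tropically convex by Corollary~\ref{cor: tconvPisconvex}, we immediately get $\tconv\conv U \subseteq \tconv(\conv\tconv U)$; but $\conv\tconv U$ is already convex, so this is just $\conv\tconv U$. Wait—more directly, $\conv\tconv U$ is a convex, tropically convex set containing $U$, hence it contains $\tconv\conv U$. So the nontrivial inclusion is $\conv\tconv U \subseteq \tconv\conv U$.

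For that inclusion, take $x \in \conv\tconv U$. By Carath\'eodory in $\RR^2$, $x$ is a convex combination of at most three points $y_1, y_2, y_3 \in \tconv U$. Each $y_i$, by (\ref{def: tconvInf}), lies in $\tconv V_i$ for some finite $V_i \subset U$. Let $V = V_1 \cup V_2 \cup V_3$, a finite subset of $U$. Then each $y_i \in \tconv V$, so $x \in \conv\tconv V$. By Lemma~\ref{lemma: convTconv in 2D}, $\conv\tconv V = \tconv\conv V$, and since $\conv V \subseteq \conv U$ we get $x \in \tconv\conv V \subseteq \tconv\conv U$. This proves $\conv\tconv U \subseteq \tconv\conv U$ and hence equality.

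I would write this out as two short paragraphs. The main subtlety—what I would double-check—is the use of Carath\'eodory's theorem to express $x$ as a convex combination of \emph{finitely many} points of $\tconv U$: this is exactly what lets me collect a single finite $V$ whose tropical convex hull contains all of them. (Without a dimension bound one could still argue via the finitary nature of convex hull, but in $\RR^2$ Carath\'eodory makes it cleanest.) Everything else is bookkeeping: monotonicity of $\tconv$ and $\conv$ under inclusion, and the already-proven finite case. I do not expect any genuine obstacle here; the theorem is essentially a compactness/finiteness wrapper around Lemma~\ref{lemma: convTconv in 2D}.

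\begin{proof}
Since $\conv U \subseteq \conv\tconv U$ and $\conv\tconv U$ is tropically convex by Corollary~\ref{cor: tconvPisconvex}, the set $\conv\tconv U$ is a convex tropically convex set containing $U$; hence $\tconv\conv U \subseteq \conv\tconv U$.

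For the reverse inclusion, let $x \in \conv\tconv U$. By Carath\'eodory's theorem in $\RR^2$, there exist $y_1,y_2,y_3 \in \tconv U$ with $x \in \conv(y_1,y_2,y_3)$. By (\ref{def: tconvInf}), for each $i$ there is a finite set $V_i \subset U$ with $y_i \in \tconv V_i$. Set $V = V_1 \cup V_2 \cup V_3$, a finite subset of $U$. Then $y_i \in \tconv V$ for each $i$, so $x \in \conv\tconv V$. By Lemma~\ref{lemma: convTconv in 2D}, $\conv\tconv V = \tconv\conv V$, and since $\conv V \subseteq \conv U$ we conclude $x \in \tconv\conv V \subseteq \tconv\conv U$. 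Therefore $\conv\tconv U \subseteq \tconv\conv U$, which completes the proof.
\end{proof}
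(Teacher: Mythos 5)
There is a genuine gap in the inclusion you treat as immediate. You assert that $\conv\tconv U$ is tropically convex ``by Corollary~\ref{cor: tconvPisconvex}'', but that corollary says the opposite composition is well behaved: the \emph{tropical} convex hull of a \emph{convex} set is convex. It does not say that the ordinary convex hull of a tropically convex set is tropically convex, and that statement is false in general: Example~\ref{exmp: conv of tconv not trop convex} exhibits three points in $\RR^3$ for which $\conv\tconv(v_1,v_2,v_3)$ fails to be tropically convex. In $\RR^2$ the claim does happen to hold, but only as a consequence of Lemma~\ref{lemma: convTconv in 2D} together with the very finiteness reduction you use elsewhere, so as written your ``easy'' direction is unjustified --- and it is in fact the substantive inclusion $\tconv\conv U \subseteq \conv\tconv U$.

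Your Carath\'eodory argument is correct, but it proves the other inclusion, $\conv\tconv U \subseteq \tconv\conv U$, which is the genuinely easy one: since $\tconv U \subseteq \tconv\conv U$ and $\tconv\conv U$ is convex (the correct application of Corollary~\ref{cor: tconvPisconvex}, to the convex set $\conv U$), one gets $\conv\tconv U \subseteq \conv(\tconv\conv U)=\tconv\conv U$ directly. To repair the proof, run the symmetric finiteness argument on the missing direction, which is what the paper does: given $x\in\tconv\conv U$, equation (\ref{def: tconvInf}) gives a finite $V\subset\conv U$ with $x\in\tconv V$; Carath\'eodory writes each point of $V$ as a convex combination of finitely many points of $U$, so $V\subset\conv A$ for a finite $A\subset U$; then $x\in\tconv V\subseteq\tconv\conv A=\conv\tconv A\subseteq\conv\tconv U$ by Lemma~\ref{lemma: convTconv in 2D}. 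With that replacement your overall strategy (reduce to the finite case via (\ref{def: tconvInf}) and Carath\'eodory) coincides with the paper's.
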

\begin{proof}
The forward containment is implied by the fact that $\tconv \conv U$ is convex by Corollary~\ref{cor: tconvPisconvex}.

For backward containment, suppose that $x\in\tconv\conv U$. Then by (\ref{def: tconvInf}) it follows that there exists a finite set $V\subset \conv U$, such that $x\in\tconv V$. The classical Carath\'eodory Theorem implies that each point $v_i \in V$ can be written as a convex combination of finitely many points in $U$. Call this set $A_i\subset U$. Since $V$ is finite, it follows that $A=\bigcup_iA_i$ is a finite subset of $U$ and $V\subset\conv A$. Now we have $x\in\tconv V\subset\tconv\conv A$. It follows $x \in \conv \tconv A$ by Lemma~\ref{lemma: convTconv in 2D}. Since $A \subset U$, this implies $x \in \conv\tconv U$.
\end{proof}

Theorem~\ref{thm: tconv commute in R2} does not hold in general when $n\ge3.$ It is not difficult to find examples for which $\conv(\tconv V)$ is not tropically convex.

\begin{figure}
    \centering
    \includegraphics[scale=0.2]{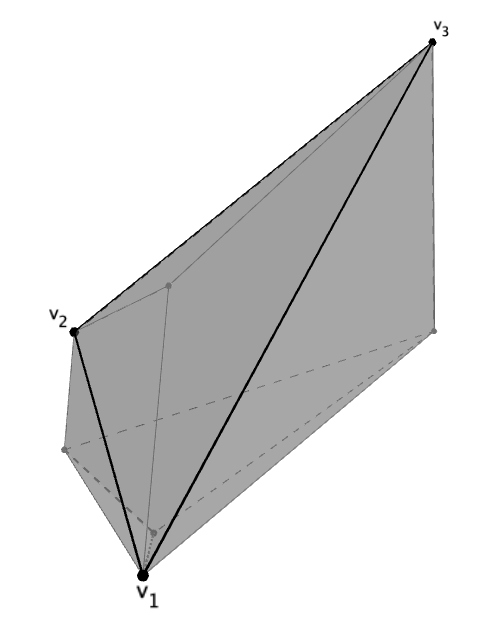}
    \hspace{5em}
    \includegraphics[scale=0.2]{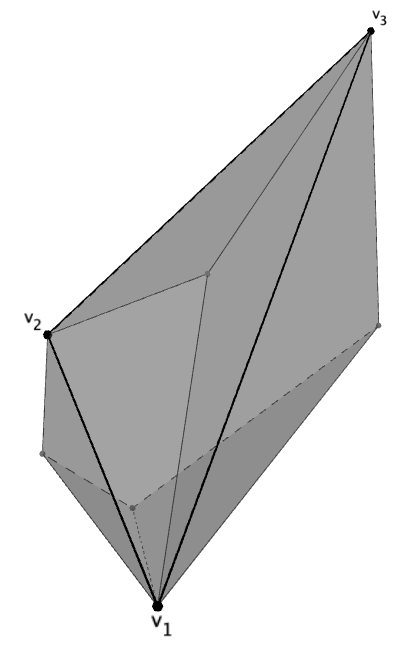}
    \caption{Illustration of Example \ref{exmp: conv of tconv not trop convex}. Left: Convex hull of $\tconv(v_1,v_2,v_3)$ with $P$ in bold. Right: Tropical convex hull of $P$ with $P$ in bold. The polytope on the left is strictly contained in the polytope on the right. }
    \label{fig:tconvNot=conv}
\end{figure}

\begin{exmp}
\label{exmp: conv of tconv not trop convex}
Let $P \subset \mathbb R^3$ be the triangle in Figure \ref{fig:tconvNot=conv} 
with vertices $v_1=(0,0,0),$ $v_2 = (1,2,3),$ and $v_3=(4,1,7)$. The convex hull of $\tconv(v_1,v_2,v_3)$ has $7$ vertices and is not tropically convex. In fact, it is possible to find a point $x$ in the classical line segment $v_1v_3$ such that the tropical convex hull of $x$ and the midpoint of the line segment $v_2v_3$ is not contained in $\tconv(v_1,v_2,v_3)$. 
Using Proposition~\ref{prop: tconv intersection} we compute 
the tropical convex hull of $P$ which is a polytope with $7$ vertices strictly containing $\conv(\tconv(v_1,v_2,v_3))$.
\end{exmp}

\section{
Polyhedral sets}
\label{sec:higher-dim}

In this section we examine the tropical convex hull of arbitrary polyhedral sets, halfspaces, and linear spaces. The main result of this section is Theorem~\ref{thm: tropconv polythedra} which classifies all tropically convex ordinary polyhedra in $\RR^n$. 

\begin{lem}\label{Cor: properties}
If $P\subset\mathbb R^n$ is a polyhedron (resp.\ cone, polyhedral complex, fan, polytope), then $\tconv P$ is a polyhedron (resp.\ cone, polyhedral complex, fan, polytope).
\end{lem}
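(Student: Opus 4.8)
The plan is to derive every case from Proposition~\ref{prop: tconv intersection}, which expresses $\tconv P=\bigcap_{j=0}^{n}(P+\mcS_j)$, together with three elementary facts: (a) each sector is a polyhedral cone, since by definition $\mcS_0=\pos(-e_1,\dots,-e_n)$ and $\mcS_j=\pos(e_0,\,-e_i:i\in[n],\ i\neq j)$ for $j\in[n]$; (b) the Minkowski sum of two polyhedra (resp.\ of two polyhedral cones) is again a polyhedron (resp.\ a polyhedral cone); and (c) a finite intersection of polyhedra (resp.\ of polyhedral cones) is again a polyhedron (resp.\ a polyhedral cone).

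The polyhedron and cone cases are then immediate. If $P$ is a polyhedron, each $P+\mcS_j$ is a polyhedron by (a) and (b), so $\tconv P=\bigcap_{j=0}^{n}(P+\mcS_j)$ is a polyhedron by (c). If in addition $P$ is a polyhedral cone, each summand $P+\mcS_j$ is a polyhedral cone and hence so is the intersection.

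For a polyhedral complex (resp.\ a fan) $P$, I would pass to its support $|P|=\sigma_1\cup\cdots\cup\sigma_m$, a finite union of polyhedra (resp.\ of polyhedral cones). Since Minkowski sum distributes over unions, $|P|+\mcS_j=\bigcup_{i=1}^{m}(\sigma_i+\mcS_j)$, and distributing the intersection over these unions gives
\[
\tconv|P| \;=\; \bigcap_{j=0}^{n}\ \bigcup_{i=1}^{m}(\sigma_i+\mcS_j) \;=\; \bigcup_{(i_0,\dots,i_n)\in[m]^{n+1}}\ \bigcap_{j=0}^{n}\bigl(\sigma_{i_j}+\mcS_j\bigr),
\]
a finite union of polyhedra (resp.\ of polyhedral cones) by the polyhedron (resp.\ cone) case applied to each term. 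A finite union of polyhedra is the support of a polyhedral complex, and a finite union of polyhedral cones (each containing the origin) is the support of a fan; in both cases a subdivision is obtained by refining along the arrangement of all facet-defining hyperplanes of the pieces. This settles these two cases.

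Finally, for a polytope $P$, the polyhedron case already shows that $\tconv P$ is a polyhedron, so it remains only to check that it is bounded, i.e.\ that $\rec(\tconv P)=\{0\}$. Using $\rec(A+B)=\rec A+\rec B$ for polyhedra, $\rec\bigl(\bigcap_i A_i\bigr)=\bigcap_i\rec A_i$ for closed convex sets with nonempty intersection, and $\rec P=\{0\}$ since $P$ is bounded, we obtain $\rec(\tconv P)=\bigcap_{j=0}^{n}\rec(P+\mcS_j)=\bigcap_{j=0}^{n}\mcS_j$. A direct check shows this intersection is $\{0\}$: for $j\in[n]$ one has $\mcS_j\subseteq\{x:x_j\ge x_k\text{ for all }k\}$, so in $\bigcap_{j=1}^{n}\mcS_j$ all coordinates are equal and nonnegative, i.e.\ $\bigcap_{j=1}^{n}\mcS_j=\pos(e_0)$, and this meets $\mcS_0=\{x:x_k\le 0\text{ for all }k\}$ only at the origin. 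Hence $\tconv P$ is a bounded polyhedron, that is, a polytope. This polytope case is the only step that takes genuine care — one must correctly identify the recession cone of the intersection $\bigcap_j(P+\mcS_j)$ and verify that all sectors meet only at the origin; the remaining cases are bookkeeping on top of Proposition~\ref{prop: tconv intersection} and standard facts about Minkowski sums and polyhedral subdivisions.
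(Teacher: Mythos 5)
Your proposal is correct and follows essentially the same route as the paper: all cases are reduced to Proposition~\ref{prop: tconv intersection}, with the polyhedron/cone cases handled by closure of polyhedra under Minkowski sums and finite intersections, the complex/fan cases by distributing the intersection over the union of cells, and the polytope case by showing the sectors meet only at the origin. The only cosmetic difference is that you phrase the boundedness argument via recession cones (and explicitly verify $\bigcap_j\mcS_j=\{0\}$), whereas the paper argues by contradiction with an unbounded ray; the underlying fact used is identical.
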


 \begin{proof}
 If  $P$ is a polyhedron then 
 $\tconv P$ is a polyhedron since it is the intersection of the finitely many polyhedra $P+\mcS_j$.
 If $P$ is a cone then  $P+\mcS_j$ is a cone for every $j$ and (\ref{eq: intersection of minkowski sums}) implies that  $\tconv P$ is also a cone.

Now let $P$ be a polyhedral complex, so $P=\cup_{i=1}^{N} P_i$ where each $P_i$ is a polyhedron. By (\ref{eq: intersection of minkowski sums})  it follows that
$$\tconv P=\tconv \left(\bigcup_{i=1}^N P_i\right)=\bigcap_{j=0}^n\bigcup_{i=1}^N(P_i+\mcS_j).$$
Observe that by distributing the intersection over the union of Minkowski sums we obtain the union of $N^{n+1}$ sets. Each set in the union is an intersection of $n+1$ Minkowski sums of the form 
$
(P_{i_0}+\mcS_0)\cap\ldots\cap(P_{i_{n}}+\mcS_n),
$
where $(i_0,\ldots,i_n)\in\{N\}^{n+1}$, so
\begin{equation*}
\tconv P= \bigcup_{(i_0,\ldots, i_n)\in \{N\}^{n+1}} ((P_{i_0}+\mcS_0)\cap \cdots \cap (P_{i_n}+\mcS_n)  ).
\end{equation*}
It follows that $\tconv P$ is a polyhedral complex since the finite intersection of polyhedra is a polyhedron. In fact, the polyhedral structure may be given by a refinement of the polyhedral complex whose polyhedra are  $\left\{(P_{i_0}+\mcS_0)\cap \cdots \cap (P_{i_n}+\mcS_n)  \right\}_{(i_0,\ldots, i_n)\in \{N\}^{n+1}}.$
If $P$ is a fan, the results on polyhedral complexes and cones imply $\tconv P$ is also a fan. 

Lastly, let $P$ be a polytope. To show $\tconv P$ is a  polytope it suffices to show it is bounded.
Suppose $\tconv P$ is not bounded. Hence it contains a ray $w+\pos(v).$ Since $P$ is bounded, again (\ref{eq: intersection of minkowski sums}) implies that $\pos(v)$ is contained in each sector $\mcS_j.$ This is not possible since the intersection of all sectors is the origin.
\end{proof}

Using the following lemma, we classify all tropically convex ordinary halfpaces in Proposition~\ref{prop:tconvHalfspace}.

\begin{lem}\label{lem:halfspaces and sectors}
Let $\mathcal H$ be a halfspace in $\mathbb R^n$. If $\mcS_j$ is one of the standard sectors in $\RR^n$ for $j\in[n]_0$, then either $\mathcal H+\mathcal S_j=\mathcal H$ or $\mathcal H+\mathcal S_j=\mathbb R^n$.
\end{lem}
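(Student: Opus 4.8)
The plan is to work directly with the defining inequality of the halfspace and the ray generators of the sector $\mcS_j$. Write $\mathcal H = \{x \in \RR^n : \langle c, x\rangle \le \delta\}$ for some nonzero $c \in \RR^n$ and $\delta \in \RR$. Recall from the excerpt that $\mcS_0 = \pos(-e_1,\ldots,-e_n)$ and, for $j \in [n]$, $\mcS_j = \pos(e_0, -e_i : i \ne j)$, where $e_0 = \sum_{i=1}^n e_i$. The key elementary fact I would use is that for a halfspace $\mathcal H$ and a cone $\mathcal C = \pos(r_1,\ldots,r_m)$, the Minkowski sum $\mathcal H + \mathcal C$ is again $\mathcal H$ if $\langle c, r_k\rangle \le 0$ for every generator $r_k$ (since adding a vector that does not increase $\langle c, \cdot\rangle$ keeps a point inside $\mathcal H$, and conversely $\mathcal H \subseteq \mathcal H + \mathcal C$ always because $0 \in \mathcal C$), and that $\mathcal H + \mathcal C = \RR^n$ as soon as some generator $r_k$ has $\langle c, r_k \rangle > 0$ (then $\mathcal H + \pos(r_k)$ already sweeps out all of $\RR^n$: given any $y$, choose $t$ large so that $y - t r_k \in \mathcal H$).

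So the whole statement reduces to the following dichotomy: for each fixed $j$, either $\langle c, r\rangle \le 0$ for all generators $r$ of $\mcS_j$, or $\langle c, r\rangle > 0$ for some generator $r$ of $\mcS_j$. This dichotomy is a tautology, so in fact the lemma is immediate once the two elementary facts above are in place — there is no real obstacle, only bookkeeping. I would state the two facts as a short preliminary observation (possibly inline), then say: applying them with $\mathcal C = \mcS_j$ gives the result, since for every $j$ one of the two alternatives on the signs $\langle c, r\rangle$ must hold.

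The one point that deserves a sentence of care is the degenerate-looking boundary case where $\langle c, r \rangle = 0$ for a generator: this is harmless and falls under the "$\le 0$" alternative, so it contributes nothing new. Likewise I should note that $\mathcal H$ here is a genuine (closed, affine) halfspace, not a hyperplane, so that "$\mathcal H + \pos(r_k) = \RR^n$ when $\langle c, r_k\rangle > 0$" is valid; the argument "choose $t$ large" uses only that $\delta$ is finite. I expect the proof to be four or five lines: set up $c$, $\delta$; record the two facts about $\mathcal H + \pos(r)$ according to the sign of $\langle c, r\rangle$; observe that the generators of each $\mcS_j$ either all pair nonpositively with $c$ or not; conclude. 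If the authors prefer, this can be phrased even more slickly by noting $\mathcal H + \mcS_j = \bigcup_{r \in \mcS_j}(\mathcal H + r)$ and that $\mathcal H + r = \mathcal H$ iff $\langle c, r\rangle \le 0$ while $\mathcal H + r \supsetneq \mathcal H$ pushes the boundary hyperplane strictly, so the union is either $\mathcal H$ or unbounded in the $c$-direction on both sides, forcing it to be all of $\RR^n$.
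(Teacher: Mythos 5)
Your proof is correct and follows essentially the same route as the paper's: both arguments reduce the dichotomy to the signs of the pairings of the halfspace's normal vector with the generators of $\mcS_j$, obtaining $\mcH+\mcS_j=\mcH$ when all pairings have the favorable sign and $\mcH+\mcS_j=\RR^n$ otherwise by adding a large multiple of an offending generator to reach any point of $\mcH^c$. Your phrasing via the general fact about $\mcH+\pos(r)$ is slightly cleaner and handles affine (not just linear) halfspaces uniformly, but it is the same argument.
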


\begin{proof} Let $\mcH$ be defined by $\left\{\sum_{k=1}^na_kx_k\ge 0\right\}$ and let $\mcS_j$ be one of the standard sectors in $\RR^n$ for $j\in[n]_0$. If $\mcS_j\subset\mcH$, then it follows immediately that $\mcH+\mcS_j=\mcH$.

Suppose that $\mcS_j\not\subset\mcH$. 
This means that at least one of the rays $\pos e_i, i\ne j,$ generating $\mcS_j$ is contained in $\mcH^c$; equivalently  $\sum_{k=1}^na_ke_{ik}<0$. We will consider two cases. First, suppose that $i=0$, and recall that $e_0=(1,\ldots, 1)$. It follows that $\pos e_0\not\subset\mcH$, and hence $\sum_{k=1}^na_k<0$. If $y\in\mcH^c$, then  $\sum_{k=1}^na_ky_k<0$. Let $\lambda\in\RR$ such that
\[
\lambda\ge\frac{\sum_{k=1}^na_ky_k}{\sum_{k=1}^na_k}>0, 
\] 
which implies $ \lambda\sum_{k=1}^na_k  \le \sum_{k=1}^na_ky_k$. Hence, $ 0  \le \sum_{k=1}^na_k(y_k-\lambda)$, implying that for any $y\in\mcH^c$, the point $y-\lambda e_0\in\mcH$ for the choice of $\lambda$ specified above.

For the second case let $e_i$ be the vector containing a $-1$ in position $i$ and $0$ otherwise. Suppose that $\pos  e_i\not\subset\mcH$ and let $y\in\mcH^c$. Then we have that $\sum_{k=1}^na_ke_{ik}=-a_i<0$ and $\sum_{k=1}^na_ky_k<0$. Let $\lambda\in\RR$ be such that 
\[
\lambda\ge-\frac{\sum_{k=1}^na_ky_k}{a_i}>0.
\] 
Hence, $-\lambda\sum_{k=1}^na_ke_{ik}+\sum_{k=1}^na_ky_k  \ge 0$ and   $\sum_{k=1}^na_k(y-\lambda e_{ik})  \ge 0$. It follows that $y-\lambda e_i\in\mcH$. 

This shows that if $\mcS_j\not\subset\mcH$, then any point in $\mcH^c$ can be written as $(y-\lambda e_i)+\lambda e_i, i\ne j$, with $y-\lambda e_i\in \mcH$. Hence, $\mcH+\mcS_j=\RR^n$.
\end{proof}

\begin{prop}\label{prop:tconvHalfspace}
If $\mathcal H$ is a halfspace in $\mathbb R^n$, then either $\tconv \mathcal H =\mathcal H$ or $\tconv \mathcal H=\mathbb R^n$.
\end{prop}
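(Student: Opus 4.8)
The plan is to invoke Proposition~\ref{prop: tconv intersection} together with the structural dichotomy just established in Lemma~\ref{lem:halfspaces and sectors}. By Proposition~\ref{prop: tconv intersection} we have $\tconv\mcH=\bigcap_{j=0}^n(\mcH+\mcS_j)$, and Lemma~\ref{lem:halfspaces and sectors} tells us that each summand $\mcH+\mcS_j$ is either $\mcH$ itself or all of $\RR^n$. So the intersection is an intersection of copies of $\mcH$ and copies of $\RR^n$; the latter contribute nothing, so $\tconv\mcH$ is either $\mcH$ (if at least one sector satisfies $\mcH+\mcS_j=\mcH$, equivalently $\mcS_j\subset\mcH$) or $\RR^n$ (if $\mcH+\mcS_j=\RR^n$ for every $j$, i.e.\ no sector is contained in $\mcH$).

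First I would write $\tconv\mcH=\bigcap_{j=0}^n(\mcH+\mcS_j)$ by Proposition~\ref{prop: tconv intersection}. Then I would split into two cases according to whether some standard sector $\mcS_j$ is contained in $\mcH$. In the first case, pick such a $j$; then $\mcH+\mcS_j=\mcH$ by Lemma~\ref{lem:halfspaces and sectors}, and since every summand $\mcH+\mcS_k$ contains $\mcH$ (as $0\in\mcS_k$) while the distinguished one equals $\mcH$, the intersection is exactly $\mcH$; thus $\tconv\mcH=\mcH$. In the second case, no standard sector lies in $\mcH$, so by Lemma~\ref{lem:halfspaces and sectors} we get $\mcH+\mcS_j=\RR^n$ for every $j\in[n]_0$, whence $\tconv\mcH=\bigcap_{j=0}^n\RR^n=\RR^n$.

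There is essentially no obstacle here: the entire content has been front-loaded into Lemma~\ref{lem:halfspaces and sectors}, and this proposition is just the assembly step. The only minor point worth stating cleanly is the trivial observation that $\mcH\subseteq\mcH+\mcS_k$ for every $k$ because each sector contains the origin, which is what guarantees that the presence of a single $\mcH$-summand already pins the intersection down to $\mcH$ rather than something smaller. One could also remark, as a sharper formulation matching Lemma~\ref{lem:halfspaces and sectors}, that $\tconv\mcH=\mcH$ precisely when $\mcH$ contains one of the $n+1$ standard sectors, but this is not needed for the statement as given.
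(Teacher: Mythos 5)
Your proof is correct and follows exactly the paper's argument: apply Proposition~\ref{prop: tconv intersection} to write $\tconv\mcH=\bigcap_{j=0}^n(\mcH+\mcS_j)$ and then invoke the dichotomy of Lemma~\ref{lem:halfspaces and sectors} in the two cases. The only difference is that you spell out the (correct and worthwhile) observation that $\mcH\subseteq\mcH+\mcS_k$ for every $k$, which the paper leaves implicit.
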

\begin{proof}
By Proposition~\ref{prop: tconv intersection} we know  $\tconv \mcH = \bigcap_{j=0}^n(\mcS_j+\mcH)$. Using Lemma~\ref{lem:halfspaces and sectors}, if there exists $j\in[n]_0$ such that $\mcS_j\subset\mcH$, then $\tconv\mcH = \mcH$. Otherwise $\tconv\mcH=\RR^n$.
\end{proof}

The following proposition shows that a halfspace is tropically convex if and only if either all of the entries of its inner normal vector are nonpositive, or it contains at most one positive entry such that the sum of all entries is nonegative.

\begin{prop}\label{prop:classifyHalfspaces}
A halfspace $\mcH=\{\sum_{k=1}^{n}a_kx_k\ge0\}$ in $\RR^{n}$ is tropically convex if and only if there exists a $j\in[n]_0$ such that $S_j\subset \mathcal H$. This happens if and only if exactly one of the following conditions is satisfied.
\begin{itemize}
    \item[(i)] If $a_k\le 0$ for every $k\in[n]$, then $\mathcal S_0\subset\mcH$.
    \item[(ii)] If $a_j\ge 0$, $a_k\le0$ for every $k\neq j$, and $a_j+\sum a_k\ge 0$, then $\mathcal S_j\subset\mcH$.
\end{itemize}
\end{prop}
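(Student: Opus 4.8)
The plan is to establish the two claimed equivalences separately. For the first equivalence — that $\mcH$ is tropically convex if and only if some standard sector $\mcS_j$ is contained in $\mcH$ — the backward direction is immediate from Proposition~\ref{prop:tconvHalfspace}: if $\mcS_j \subset \mcH$ for some $j$, then $\tconv\mcH = \mcH$, so $\mcH$ is tropically convex. For the forward direction I would argue the contrapositive: if no sector $\mcS_j$ is contained in $\mcH$, then by Lemma~\ref{lem:halfspaces and sectors} we have $\mcH + \mcS_j = \RR^n$ for every $j$, so Proposition~\ref{prop: tconv intersection} gives $\tconv\mcH = \bigcap_{j=0}^n(\mcH+\mcS_j) = \RR^n \neq \mcH$, hence $\mcH$ is not tropically convex (as it is a proper halfspace). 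This is really just a repackaging of Proposition~\ref{prop:tconvHalfspace}.

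The substantive part is the second equivalence: the existence of some $j \in [n]_0$ with $\mcS_j \subset \mcH$ is governed precisely by the sign conditions in (i) and (ii). Here I would reduce everything to the containment criterion ``$\mcS_j \subset \mcH$ iff all generators of $\mcS_j$ lie in $\mcH$,'' which holds because $\mcH$ is a halfspace (a convex cone) and $\mcS_j$ is a simplicial cone. Recall $\mcS_0 = \pos(-e_1,\dots,-e_n)$ and, for $j \in [n]$, $\mcS_j = \pos(e_0, -e_i : i \neq j)$. Evaluating the linear form $\sum_k a_k x_k$ on these generators: $\mcS_0 \subset \mcH$ iff $-a_k \geq 0$ for all $k$, i.e.\ $a_k \leq 0$ for all $k$, which is condition (i). For $j \in [n]$, $\mcS_j \subset \mcH$ iff $\sum_k a_k \geq 0$ (from $e_0$) and $-a_i \geq 0$ for all $i \neq j$ (from $-e_i$), i.e.\ $a_i \leq 0$ for all $i \neq j$ and $a_j + \sum_{k \neq j} a_k \geq 0$; writing the sum condition as $a_j + \sum_{k}a_k \geq 0$ as in the statement (with the convention there that $\sum a_k$ is over $k \neq j$, or absorbing it — I should be slightly careful matching their notation) gives condition (ii). Then I would check that (i) and (ii) are mutually exclusive whenever $\mcH$ is a proper halfspace with not all $a_k$ zero: if two indices $j \neq j'$ both had $a_j, a_{j'} \geq 0$ with all other coordinates $\leq 0$, then all coordinates except possibly these two are $\leq 0$ and... actually the cleaner statement is that (i) and (ii)-for-$j$ overlap only in degenerate ways — I would phrase ``exactly one of (i), (ii)'' as: if all $a_k \le 0$ we are in case (i), and if some $a_j > 0$ then at most one such $j$ can satisfy the remaining constraints, landing us in (ii), so the cases partition.

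The main obstacle I anticipate is bookkeeping around boundary/degenerate cases rather than anything deep: when several $a_k$ equal $0$, a given sector could be contained in $\mcH$ via more than one of the labels $j$, so the ``exactly one'' phrasing needs the convention that (i) takes precedence when all $a_k \leq 0$, and one must also handle the case where the $a_k$ are not all of one sign but some are zero (e.g.\ $a_j = 0$ still counts as $a_j \geq 0$). I would also want to note the normalization: we may assume $\mcH$ is not all of $\RR^n$ and the normal vector is not the zero vector, since otherwise ``halfspace'' is degenerate. Once the generator-evaluation dictionary is set up, each implication is a one-line sign check, so the proof should be short; the only real care is stating the dichotomy between (i) and (ii) correctly and matching it to ``there exists $j \in [n]_0$ with $\mcS_j \subset \mcH$.''
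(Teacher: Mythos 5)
Your proposal is correct and follows essentially the same route as the paper: the first equivalence is read off from Proposition~\ref{prop:tconvHalfspace} (via Lemma~\ref{lem:halfspaces and sectors} and Proposition~\ref{prop: tconv intersection}), and the second is the generator-by-generator sign check $\sum_k a_k e_{ik}\ge 0$ on the spanning rays of each $\mcS_j$. Your extra care about the ``exactly one'' dichotomy and the degenerate cases with zero coordinates goes beyond what the paper writes, but it is consistent with and does not change the argument.
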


\begin{proof}
The first statement follows immediately from Proposition~\ref{prop:tconvHalfspace}. The sector $\mcS_j$ is contained in $\mcH$ if and only if the spanning rays $e_i$ of $\mcS_j$ for $i \neq j$ satisfy the inequality $\sum_{k=1}^na_ke_{ik}\ge 0$. This inequality is satisfied precisely in cases (i) and (ii) listed above. 
\end{proof}

\begin{lem}\label{lem:hyper}
A linear space is tropically convex if and only if it is an intersection of hyperplanes of the form 
$\{x_i-x_j=0 \mid i \neq j\}$ or $\{x_k=0\}$. 
\end{lem}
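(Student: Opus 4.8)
The plan is to prove both directions using the characterization of tropically convex halfspaces in Proposition~\ref{prop:classifyHalfspaces}, since a linear space is an intersection of hyperplanes, and a hyperplane $\{H=0\}$ is the intersection of the two halfspaces $\{H\ge 0\}$ and $\{-H\ge 0\}$.

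For the "if" direction, I would first observe that hyperplanes of the form $\{x_i-x_j=0\}$ and $\{x_k=0\}$ are tropically convex: indeed $\{x_i-x_j\ge 0\}$ has inner normal vector $e_i - e_j$, which has one positive entry, one negative entry, and entry-sum $0$, so it is tropically convex by Proposition~\ref{prop:classifyHalfspaces}(ii); the opposite halfspace $\{x_j - x_i \ge 0\}$ is tropically convex for the same reason; and $\{x_k \ge 0\}$, $\{-x_k\ge 0\}$ are covered by (ii) and (i) respectively. Hence each such hyperplane, being the intersection of two tropically convex halfspaces, is tropically convex, and so is any intersection of them (an intersection of tropically convex sets is tropically convex, directly from the definition of tropical convexity).

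For the "only if" direction, suppose $L$ is a tropically convex linear space, and write $L = \bigcap \{H_\ell = 0\}$ for linear forms $H_\ell = \sum_k a^{(\ell)}_k x_k$. The subtlety is that an arbitrary $\mcH$-description of $L$ need not have each defining hyperplane tropically convex, so I would instead argue on the level of the orthogonal complement $L^\perp$ (the row space of the defining forms), and show that $L^\perp$ admits a basis consisting of vectors of the form $e_i - e_j$ and $e_k$. Concretely, for a linear functional $w = \sum_k a_k x_k$ that vanishes on $L$, both halfspaces $\{w \ge 0\}$ and $\{-w\ge 0\}$ contain $L$; I would want to use tropical convexity of $L$ to force structure on $w$. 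The cleanest route: if $L$ is tropically convex and $v \in L$, then for any sector $\mcS_j$ the translate $v + \mcS_j$ meets... — more directly, by Proposition~\ref{prop: tconv intersection}, $L = \tconv L = \bigcap_{j=0}^n (L + \mcS_j)$, so each $L + \mcS_j$ is a halfspace-bounded region containing $L$; analyzing when $L + \mcS_j = L$ (as opposed to a larger set) shows $\mcS_j \subset \mathrm{lineality}$ directions compatible with $L$, and pinning this down forces the defining equations of $L$ to be differences of coordinates or single coordinates. Alternatively, and more simply, I can cite Lemma~\ref{lem:halfspaces and sectors}: for $L$ tropically convex and any linear form $w$ vanishing on $L$, tropical convexity of the halfspace $\{w\ge0\}$ is \emph{not} guaranteed, but I can pass to a minimal set of supporting halfspaces of the polyhedral cone...

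The key step I expect to be the main obstacle is precisely this: extracting from the \emph{global} tropical convexity of $L$ a statement about the \emph{individual} defining hyperplanes. I would handle it by taking a generic point $p$ in the relative interior of $L$, noting $p + \varepsilon \mcS_j^{\mathrm{relint}}$ must (by $L = \bigcap_j(L+\mcS_j)$ applied locally, or by the type/covector description) interact with $L$ only through coordinate hyperplanes and coordinate-difference hyperplanes, and concluding that the linear span of the normals to $L$ is spanned by vectors $e_i - e_j$ and $e_k$ — equivalently, that $L$ is a coordinate subspace of a "diagonal" subspace, i.e. cut out by equations $x_i = x_j$ and $x_k = 0$. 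Once the normal space has such a basis, writing $L$ as the intersection of the corresponding hyperplanes finishes the proof.
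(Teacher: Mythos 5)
Your ``if'' direction is correct and essentially matches the paper's: the paper cites \cite[Theorem 2]{D-S} for tropical convexity of the individual hyperplanes where you instead derive it from Proposition~\ref{prop:classifyHalfspaces}, and both then use that an intersection of tropically convex sets is tropically convex.

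The ``only if'' direction, however, is not a proof. You correctly identify the obstacle --- that an arbitrary $\mcH$-description of $L$ need not consist of tropically convex halfspaces, so one cannot argue hyperplane-by-hyperplane --- but you then list three candidate strategies (analyzing $L^\perp$, analyzing when $L+\mcS_j=L$, a generic-point/covector argument) and none of them is carried out; each ends with a phrase of the form ``pinning this down forces\dots'' or ``concluding that\dots'', which is precisely the statement to be proved. The paper closes this gap by a different and much shorter mechanism that never touches the defining halfspaces at all: for any $x\in L$, the segment $\conv(0,x)$ lies in $L$, so $\tconv\conv(0,x)\subseteq L$ by tropical convexity, and Corollary~\ref{cor: dim of line segment} says $\dim\tconv\conv(0,x)$ equals the number of distinct nonzero coordinates of $x$; hence \emph{every} point of $L$ has at most $d=\dim L$ distinct nonzero coordinates. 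The locus of such points is a finite union of linear subspaces, each an intersection of hyperplanes $\{x_i-x_j=0\}$ and $\{x_k=0\}$ and each of dimension at most $d$; since $L$ is convex (irreducible) it must lie in a single one of these, and the dimension count forces equality. If you want to salvage your write-up, I would replace your entire second half with this argument; the quantitative dimension formula from Corollary~\ref{cor: dim of line segment} is the missing ingredient that converts global tropical convexity of $L$ into the required coordinate structure.
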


\begin{proof}
By \cite[Theorem 2]{D-S}, hyperplanes of the form  $\{x_i-x_j=0\}$  and  $\{x_i=0\}$ are tropically convex. Hence, the intersection of any hyperplanes of this form is also tropically convex.

Conversely, let $L \subset \RR^n$ be a linear space and suppose $L$ is tropically convex. Consider $\conv(0, x)$ for some $x\in L$. By Corollary~\ref{cor: dim of line segment}, the dimension of the tropical convex hull of $\conv(0,x)$ is equal to the number of distinct nonzero coordinates of $x$. Since $L$ is tropically convex, $x$ has at most $\dim L$ distinct nonzero coordinates.
This implies $L$ is contained in the union of the intersections of some hyperplanes $\{x_i-x_j=0\}$  and  $\{x_k=0\}$. Since $L$ is convex, it follows that $L$ is just an intersection of  $\{x_i-x_j=0\}$  and  $\{x_k=0\}$  for some $i\neq j$ and $k$. 
\end{proof}

The following theorem is the main result of this section.
\begin{thm}\label{thm: tropconv polythedra}
A full-dimensional ordinary polyhedron is tropically convex if and only if all of its defining halfspaces are tropically convex. 
\end{thm}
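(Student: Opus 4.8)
The implication from right to left is immediate: an arbitrary intersection of tropically convex sets is tropically convex straight from the definition, so if $P=\bigcap_{i=1}^{m}\mcH_i$ with every $\mcH_i$ tropically convex then $P$ is tropically convex. So the plan is to prove the converse. Since $P$ is full\-dimensional, take the representation $P=\bigcap_{i=1}^{m}\mcH_i$, $\mcH_i=\{\langle\nu_i,x\rangle\ge\delta_i\}$, to be irredundant, so each $\mcH_i$ is facet\-defining and the facets $F_i=P\cap\{\langle\nu_i,x\rangle=\delta_i\}$ have dimension $n-1$; these are the defining halfspaces. Assuming $P$ is tropically convex, I want each $\mcH_i$ tropically convex, and by symmetry it suffices to treat $i=1$.

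I would argue via the tangent cone of $P$ at a facet point. First record two facts, each a one\-line computation: tropical convexity is preserved by translation $U\mapsto U+v$ and by scaling $U\mapsto tU$ for $t>0$, because $(a\odot(x+v))\oplus(b\odot(y+v))=\bigl((a\odot x)\oplus(b\odot y)\bigr)+v$ and $(ta\odot tx)\oplus(tb\odot ty)=t\bigl((a\odot x)\oplus(b\odot y)\bigr)$. Hence $\tfrac1t(P-z)$ is tropically convex for every $t>0$ and every $z\in P$. Now choose $z$ in the relative interior of $F_1$, so that $\langle\nu_1,z\rangle=\delta_1$ while $\langle\nu_i,z\rangle>\delta_i$ for $i\ge2$. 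Because $P$ is convex, $\tfrac1{t_1}(P-z)\supseteq\tfrac1{t_2}(P-z)$ whenever $0<t_1<t_2$, so $\{\tfrac1t(P-z)\}_{t>0}$ is a nested family, and a nested union of tropically convex sets is tropically convex. A direct check with the $m$ defining inequalities shows $\bigcup_{t>0}\tfrac1t(P-z)$ equals the linear halfspace $\{v\in\RR^n:\langle\nu_1,v\rangle\ge0\}$: if $\langle\nu_1,v\rangle\ge0$ then $z+tv$ satisfies all the inequalities once $t>0$ is small enough, and if $\langle\nu_1,v\rangle<0$ then $z+tv\notin\mcH_1$ for every $t>0$. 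So $\{v:\langle\nu_1,v\rangle\ge0\}$ is tropically convex, and $\mcH_1$, a translate of it, is tropically convex too.

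A more computational alternative bypasses tangent cones and uses Proposition~\ref{prop:classifyHalfspaces}. Fix $z$ in the relative interior of $F_1$ and a ball $B$ about $z$ small enough that $B\cap P=B\cap\mcH_1$. Every point of a tropical segment $\tconv(x,y)$ has each coordinate between $\min(x_k,y_k)$ and $\max(x_k,y_k)$, so the tropical segment of two points of $B$ stays in $B$; hence tropical convexity of $P$ forces $\tconv(x,y)\subseteq\mcH_1$ for all $x,y\in B\cap\mcH_1$. If $\mcH_1$ were not tropically convex, Proposition~\ref{prop:classifyHalfspaces} leaves two cases for $\nu_1$. If it has two positive coordinates, say in positions $p\neq q$, then $x=z+se_p$ and $y=z-se_p+ce_q$, with $c$ chosen so that $\langle\nu_1,y\rangle=\delta_1$ and $s>0$ small, lie in $\mcH_1$ while $x\oplus y=z-se_p\notin\mcH_1$. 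If it has a single positive coordinate but negative coordinate sum, a similar but slightly more delicate choice of $x,y\in\mcH_1$ and of the point along $\tconv(x,y)$ (not its componentwise\-minimum break point) again produces a point of the segment outside $\mcH_1$. Either way $P$ is not tropically convex, a contradiction.

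The main obstacle is making the localization at a facet precise: one must pick $z$ in the relative interior of the facet and invoke irredundancy so that, near $z$, the polyhedron genuinely coincides with the single halfspace $\mcH_1$ and the remaining constraints cannot rescue an escaping tropical combination. With the tangent\-cone argument this is essentially the only point needing care; with the computational one there is in addition the ``one positive coordinate, negative sum'' case of Proposition~\ref{prop:classifyHalfspaces}, where the naive perturbation stays inside $\mcH_1$ and the witnessing point must be placed partway along the tropical segment rather than at its break point.
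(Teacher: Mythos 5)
Your right-to-left direction is the same one-line observation as the paper's. For the converse, your primary (tangent-cone) argument is correct and complete, but it takes a genuinely different route. The paper argues by contradiction through explicit computation: it invokes Lemma~\ref{lem:hyper} to see the bounding hyperplane of a non-tropically-convex defining halfspace $\mcH$ is itself not tropically convex, writes down the pseudovertex $p$ of a tropical segment that lands in $\mcH^c$, translates along the hyperplane into $P$, and checks directly that a pseudovertex $p'$ of $\tconv(Tx,z)$ satisfies $\sum a_kp'_k=(1-\lambda)\sum a_kTp_k<0$. You instead localize at a point $z$ in the relative interior of a facet and use three soft facts --- tropical convexity is preserved by translation, by positive scaling, and by nested unions --- to identify $\bigcup_{t>0}\tfrac1t(P-z)$ with the homogeneous halfspace $\{\langle\nu_1,v\rangle\ge0\}$, whence $\mcH_1$ is tropically convex as a translate of a tropically convex set. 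This avoids the pseudovertex formulas, the translation-and-convex-combination computation, and any appeal to Proposition~\ref{prop:classifyHalfspaces}; what the paper's computation buys in exchange is an explicit witness pair of points in $P$. Two caveats on your secondary ``computational alternative'' only: a Euclidean ball does not contain the componentwise box spanned by two of its points (take $x=z+re_p$, $y=z+re_q$), so $B$ must be an $\ell^\infty$-ball for ``the tropical segment of two points of $B$ stays in $B$'' to be true; and the one-positive-coordinate, negative-sum case is asserted rather than proved. Since the tangent-cone argument already closes the proof, neither gap affects correctness.
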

\begin{proof} Let $P \subset \RR^n$ be a full-dimensional, ordinary polyhedron. Since $P$ is full-dimensional, it has a unique, irredundant hyperplane representation. If all defining halfspaces of $P$ are tropically convex, then $P$ is tropically convex.

Suppose that $P$ is tropically convex and there exists a defining halfspace $\mcH$ of $P$ that is not tropically convex. Let $H$ be the hyperplane at the boundary of $\mcH$. Since $\mcH$ is not tropically convex, it follows that $H$ is not tropically convex. Otherwise, by Lemma~\ref{lem:hyper} $H$ is parallel to one of the facets of the standard tropical hyperplane, so both $\mcH$ and $-\mcH$ are tropically convex. Let $x', y' \in \mcH$ such that $\tconv(x',y') \not\subset \mcH$. This implies that there exist $x,y \in \tconv(x',y')\cap H$ such that $(\tconv(x,y)\setminus \{x,y\})\subset \mathcal H^c$.
Hence, at least one pseudovertex $p$ of $\tconv(x,y)$ is in $\mcH^c$. 
 After relabeling, we assume the coordinates of $y-x$ are ordered $$y_1-x_1\le\cdots\le y_s-x_s\le 0\le y_{s+1}-x_{s+1}\le\cdots\le y_n-x_n.$$
Generalizing the result \cite[Proposition 3]{D-S}  there are two forms for the pseudovertices of $\tconv(x,y)$ in $\RR^n$ based on the signs of the coordinates of the difference $y-x$.
For any $s<j\le n$ the pseudovertex is $$p=(y_1, y_2, \ldots, y_j, y_j-x_j+x_{j+1}, \ldots, y_j-x_j+x_n)$$ and for $j\le s$ the pseudovertex is $(y_1-y_j+x_j, \ldots, y_{j-1}-y_j+x_j, x_j, x_{j+1}, \ldots, x_n)$. We provide the computation for the former and omit it for the latter as the proof is analogous. Since $p\in\mcH^c$, it follows that $\sum_{k=1}^n a_kp_k<0$.

Using a translation $T$ along $H$ we can translate $x$ and $y$ so that at least one of the points $Tx$ or $Ty$ is contained in $P$. Without loss of generality, we may assume that $Tx\in P$. If $Ty\in P$, then we are done. Suppose that $Ty\not\in P$. Consider the line segment $\conv(Tx,Ty)\subset H$, which must intersect the boundary of $P$. Let the point of intersection be $z$, which can be written as $z=\lambda Tx + (1-\lambda) Ty$, for $0<\lambda<1$. We claim that $\tconv(Tx,z)\not\subset\mcH$.

Note that one of the pseudovertices of $\tconv(Tx,z)$ is $p'=(z_1,z_2,\ldots, z_j, z_j-Tx_j+Tx_{j+1}, \ldots, z_j-Tx_j+Tx_n)$. We will show that $p'\not\in\mcH$. Note that 
\begin{align*}
    \sum_{k=1}^na_kp_k & = a_1y_1+\cdots+a_jy_j+a_{j+1}(y_j-x_j+x_{j+1})+\cdots+a_n(y_j-x_j+x_n)\\
    & = \sum_{k=1}^ja_ky_k+\sum_{k=j+1}^na_k(y_j-x_j)+\sum_{k=j+1}^na_kx_k<0
\end{align*}
where the inequality is preserved under the translation $T$. That is, $\sum_{k=1}^na_kTp_k<0$.

We compute the following:
\begin{align*}
    \sum_{k=1}^na_kp'_k & = a_1z_1+a_2z_2+\cdots+a_jz_j+a_{j+1}(z_j-Tx_j+Tx_{j+1})+\cdots+a_n(z_j-Tx_j+Tx_n)\\
    & = \sum_{k=1}^ja_kz_k+\sum_{k=j+1}^na_k(z_j-Tx_j)+\sum_{k=j+1}^na_kTx_k\\
    & = \sum_{k=1}^ja_k(\lambda Tx_k+(1-\lambda)Ty_k)+\sum_{k=j+1}^na_k(\lambda Tx_j+(1-\lambda)Ty_j-Tx_j)+\sum_{k=j+1}^na_kTx_k\\
    & = (1-\lambda)\sum_{k=1}^ja_kTy_k+(1-\lambda)\sum_{k=j+1}^na_k(Ty_j-Tx_j)+(1-\lambda)\sum_{k=j+1}^na_kTx_k.
\end{align*}
Hence, $\sum_{k=1}^na_kp'_k=(1-\lambda)\sum_{k=1}^na_kTp_k<0$.
This implies that there are two points in $P$, namely $Tx$ and $z$, whose tropical convex hull is not in $P$. This contradicts the assumption that $P$ is tropically convex.
\end{proof}

\begin{cor}
Let $P\subset\RR^n$ be a 
polyhedron of dimension $d<n$. $P$ is tropically convex if and only if it is contained in a tropically convex linear space $L$ of dimension $d$  and its $\mcH$-representation  in $L$ is given only by tropically convex halfspaces.
\end{cor}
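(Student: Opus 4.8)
The plan is to reduce the statement to the full-dimensional case, Theorem~\ref{thm: tropconv polythedra}, by first showing that a $d$-dimensional tropically convex polyhedron is forced to sit inside a tropically convex affine subspace of dimension exactly $d$. Throughout I use one elementary observation: tropical convexity is translation invariant, since coordinatewise minimum commutes with adding a fixed vector, i.e.\ $(a\odot(x+v))\oplus(b\odot(y+v)) = ((a\odot x)\oplus(b\odot y))+v$ for every $v\in\RR^n$. In particular any translate of a tropically convex linear space is tropically convex and conversely, so the space $L$ in the statement is read as an affine subspace, namely a translate of an intersection of hyperplanes of the form $\{x_i=x_j\}$ or $\{x_k=0\}$; the $\mcH$-representation ``in $L$'' is the one obtained after identifying $L$ with $\RR^d$.

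For the forward implication, assume $P$ is tropically convex and fix $x_0\in P$. Because $P$ is an ordinary polyhedron it is classically convex, so $\conv(x_0,x)\subset P$ for every $x\in P$, whence $\tconv\conv(x_0,x)\subset\tconv P=P$ and $\dim\tconv\conv(x_0,x)\le\dim P=d$. By Corollary~\ref{cor: dim of line segment}(i) this means $x-x_0$ has at most $d$ distinct nonzero coordinates. The set of points of $\RR^n$ with at most $d$ distinct nonzero coordinates is a finite union of linear subspaces, each an intersection of hyperplanes $\{x_i=x_j\}$ and $\{x_k=0\}$ of dimension exactly $d$ — the point being that any such subspace of dimension $<d$ enlarges to one of dimension $d$ because $d\le n$, which is the one piece of genuine (if elementary) bookkeeping. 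Since $P-x_0$ is convex of dimension $d$ and contained in this union, it lies in a single such subspace $D$ (a convex set is not covered by finitely many proper affine subspaces of its affine hull), and comparing dimensions gives $\operatorname{aff}(P-x_0)=D$. Hence $L:=\operatorname{aff}(P)=x_0+D$ is a translate of the tropically convex linear space $D$, has dimension $d$, and contains $P$. Identify $L$ with $\RR^d$ via $\phi(y)=\pi(y-x_0)$, where $\pi$ reads off one coordinate from each free block of $D$; because the condition $a\oplus b=0$ keeps the remaining coordinates of a tropical combination equal to $0$, the map $\phi$ intertwines tropical combinations, so $\phi(P)$ is a full-dimensional tropically convex polyhedron in $\RR^d$. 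By Theorem~\ref{thm: tropconv polythedra} its defining halfspaces are tropically convex, and these are precisely the halfspaces of the $\mcH$-representation of $P$ in $L$.

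For the converse, suppose $P\subset L$ with $L$ a tropically convex affine subspace of dimension $d$ whose induced $\mcH$-representation of $P$ uses only tropically convex halfspaces. Choosing $x_0\in L$, translation invariance makes $D:=L-x_0$ a tropically convex linear space, so by Lemma~\ref{lem:hyper} it is an intersection of hyperplanes $\{x_i=x_j\}$ and $\{x_k=0\}$, necessarily of dimension $d$. Using the same identification $\phi\colon L\to\RR^d$, the polyhedron $\phi(P)$ is full-dimensional in $\RR^d$ and its $\mcH$-representation consists of tropically convex halfspaces, so $\phi(P)$ is tropically convex by Theorem~\ref{thm: tropconv polythedra}; since $\phi^{-1}$ also intertwines tropical combinations, $P=\phi^{-1}(\phi(P))$ is tropically convex in $\RR^n$.

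The main obstacle is the affine-hull claim in the forward direction — that tropical convexity of $P$ forces $\operatorname{aff}(P)$ to be a translate of a tropically convex linear space of dimension exactly $d$ — which is where Corollary~\ref{cor: dim of line segment} enters essentially and which carries the only nontrivial combinatorics. Once it is in place, both implications reduce to Theorem~\ref{thm: tropconv polythedra} transported across the isomorphism $L\cong\RR^d$, the compatibility of that isomorphism with tropical combinations being guaranteed precisely by the normalization $a\oplus b=0$ in the definition of tropical convexity.
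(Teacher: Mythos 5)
Your proof is correct, and the reduction to Theorem~\ref{thm: tropconv polythedra} via the identification $L\cong\RR^d$ is the same as in the paper. Where you diverge is in how you establish that the ambient space $L$ must be tropically convex when $P$ is. The paper organizes its proof as a case split on whether the linear hull $L$ (after translating $P$ to contain the origin) is tropically convex: in the good case it projects to $\RR^d$ and applies the full-dimensional theorem, and in the bad case it argues the contrapositive by reusing the translation-and-pseudovertex machinery from the proof of Theorem~\ref{thm: tropconv polythedra} to produce two points of $P$ whose tropical convex hull escapes $P$. You instead prove the forward implication directly: Corollary~\ref{cor: dim of line segment} bounds the number of distinct nonzero coordinates of $x-x_0$ for every $x\in P$, so $P-x_0$ sits in a finite union of $d$-dimensional subspaces cut out by hyperplanes $\{x_i=x_j\}$ and $\{x_k=0\}$, and convexity plus a dimension count pins down $\operatorname{aff}(P)$ as one of them. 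This is in effect the paper's proof of Lemma~\ref{lem:hyper} run on the polyhedron itself rather than on a linear space assumed tropically convex in advance; it buys you a self-contained argument that avoids the paper's somewhat informally invoked ``translation argument similar to that in the proof of Theorem~\ref{thm: tropconv polythedra},'' at the cost of redoing the covering bookkeeping (including the enlargement of the coordinate subspaces to dimension exactly $d$, which you rightly flag and which uses $d<n$). Both routes are sound; yours also makes explicit the intertwining of tropical combinations under the block-projection $\phi$, a point the paper leaves implicit when it says the projection restricted to $P$ is an isomorphism.
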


\begin{proof}
After translation, we may assume that $P$ contains the origin.
Hence, $P$ is contained in a unique, $d$-dimensional linear subspace $L$. 
If $L$ is tropically convex, then by Lemma~\ref{lem:hyper} $P$ is contained in the intersection of finitely many hyperplanes of the form $\{x_k=0\}$ for $k\in [n]$, and $\{x_i-x_j=0 \mid i\ne j\}$ for $i,j\in[n]$. Now we can work in $L$ by deleting the $x_k$ and $x_i$ coordinates. Note that the restriction of this projection map to $P$ is an isomorphism. 
We now consider $P$ in the $d$-dimensional linear subspace $L$. Equivalently, we can work in $\RR^d$ where $P$ is full-dimensional and has a unique, irredundant halfspace representation. 
By Theorem~\ref{thm: tropconv polythedra} it follows that $P$ is tropically convex in $L$ if and only if the halfspaces defining $P$ in $L$ are tropically convex. Hence, the inner normal vectors of the defining halfspaces satisfy Proposition~\ref{prop:classifyHalfspaces}. 
The lift of each of these hyperplanes to $\RR^n$ will have the same equation, hence it still satisfies the conditions of Proposition~\ref{prop:classifyHalfspaces}. Therefore, each halfspace in $L$ is tropically convex in $L$ if and only if it is tropically convex in $\RR^n$. 

Suppose that $L$ is not tropically convex. Then there exist two points $x,y\in L$ such that $\tconv(x,y)\not\subset L$. Using a translation argument similar to that in the proof of Theorem~\ref{thm: tropconv polythedra}, we can find two points $Tx, z \in P$ whose tropical convex hull is not contained in $P$. 
Hence, $P$ is not tropically convex. 
\end{proof} 

\begin{rmk}
The authors of \cite{F-K} characterize  distributive polyhedra. Any such polyhedron $P$ has the property   that $\min(x,y)$ and $\max(x,y)$ are contained in $P$.  Note that only polytropes are distributive polytopes. This is not true for tropically convex polyhedra. 
For example, consider the triangle $P\subset\mathbb R^2$  in Figure \ref{fig:Notdistributive} whose vertices are the origin, $(3,1)$, and $(1,3)$. This is a tropically convex polytope by Theorem \ref{thm: tconv commute in R2}, but not a distributive polytope. In particular, it is not max-closed since $\max(B,C)\notin P$.
\end{rmk}
\definecolor{wqwqwq}{rgb}{0.3764705882352941,0.3764705882352941,0.3764705882352941}
\definecolor{ududff}{rgb}{0.30196078431372547,0.30196078431372547,1.}
\definecolor{uuuuuu}{rgb}{0.26666666666666666,0.26666666666666666,0.26666666666666666}
\begin{figure}
    \centering
\begin{tikzpicture}[line cap=round,line join=round,>=triangle 45,x=0.5cm,y=0.5cm]
\draw[->,color=black] (-1,0.) -- (5,0.);

\draw[->,color=black] (0.,-1) -- (0.,5);

\draw[color=black] (0pt,-10pt) node[right] {\footnotesize $0$};
\clip(-2,-2) rectangle (8,6.16);
\fill[line width=0.8pt,color=wqwqwq,fill=wqwqwq,fill opacity=0.10000000149011612] (0.,0.) -- (3.,1.) -- (1.,3.) -- cycle;
\draw [line width=0.8pt,color=wqwqwq] (0.,0.)-- (3.,1.);
\draw [line width=0.8pt,color=wqwqwq] (3.,1.)-- (1.,3.);
\draw [line width=0.8pt,color=wqwqwq] (1.,3.)-- (0.,0.);
\draw [line width=1.pt] (0.,0.)-- (1.,1.);
\draw [line width=1.pt] (1.,3.)-- (1.,1.);
\draw [line width=1.pt] (1.,1.)-- (3.,1.);
\begin{scriptsize}

\draw (3.3,1.11) node {$B$};
\draw (1.5,1.9) node {$P$};
\draw (0.84,3.37) node {$C$};
\draw  (3.,3.) circle (0.5pt);
\draw(4.7,3.17) node {$\max(B,C)$};
\end{scriptsize}
\end{tikzpicture}
    \caption{A tropically convex triangle that is not distributive since the point $\max(B,C)$ is not contained in it. In black the tropical convex hull of the vertices.}
    \label{fig:Notdistributive}
\end{figure}
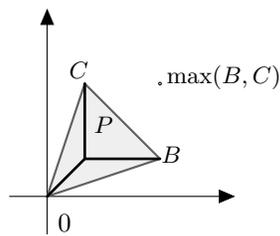

\section{Lower bound on the degree of a tropical curve}
\label{sec: tropC-2}


For the remainder of the paper we use an alternative definition of the tropical convex hull from \cite[Proposition 4]{D-S}. We work in the tropical projective torus $\PP\TT^n \cong \RR^{n+1}/\RR\textbf{1}$ which is isomorphic to $\RR^n$ as follows. Given a set $U \subset \RR^{n+1}$, its tropical convex hull is the set of all possible tropical linear combinations
$a_1\odot~u_1\oplus\ldots\oplus~a_k\odot~u_k$ with $u_i \in U$ and $a_i \in \RR $. With this definition we have $\tconv U+\RR\textbf{1}=\tconv U$. Taking the quotient with $\RR\textbf{1}$ we obtain \vspace{-5mm}\\
 \[
    \tconv U = \tconv\left\{ u \in \RR^{n}:  (0,u)+\RR\textbf{1}\subset U\right\}
 \]
computed using (\ref{def: tconvInf}). It follows  that the results obtained in Section~\ref{sec:onedim} also hold in this case. 

Let $\Gamma$ be a tropical  curve. This is a weighted balanced rational polyhedral complex of dimension one  in $\PT^{n}$. 
The degree of $\Gamma$ is defined to be the multiplicity at the origin of the stable intersection  between $\Gamma $ and the standard tropical hyperplane \cite[Definition 3.6.5]{M-S}.
For realizable curves, this is equal to the  degree of any classical curve which tropicalizes to $\Gamma$ \cite[Corollary 3.6.16]{M-S}.
Let $r_1,\ldots,r_k$ be the rays of a tropical curve $\Gamma$ where  $r_i=w+\pos(v_i) $ for some $w\in\PT^n$.
Since $\Gamma\subset \PT^n$ we can choose each $v_i\in\PT^n$ to be the minimal nonnegative integer vector representative that generates $r_i.$
If the multiplicity of the ray $r_i$ in $\Gamma$ is $m_i,$ then by \cite[Lemma 2.9]{B-G-S} we have
\begin{equation}\label{degree}
    (\deg\Gamma)\mathbf{1}=\sum_{i=1}^k m_iv_i.
\end{equation} 

\noindent The main result of this section is Theorem~\ref{thm:degree general}, which states that a tropical ffan curve $\Gamma$ satisfies the inequality
\begin{equation*} \tag{\ref{eq: tropClass}}
 \dim\tconv \Gamma\le \deg \Gamma.
\end{equation*}
The proof relies entirely on tropical and combinatorial techniques and uses results from Sections \ref{sec:onedim} and \ref{sec:higher-dim}. Here we state the following two results we reference within the subsequent proofs.

\begin{thm}\label{thm:tropRank}\cite[Theorem 4.2]{DSS} 
The tropical rank of a $k\times n$ matrix $M$ is equal to one plus the  dimension of the tropical convex hull of the columns of $M$ in $\mathbb R^{k}/\mathbb R\mathbf 1$.
\end{thm}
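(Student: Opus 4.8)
Since this statement is \cite[Theorem~4.2]{DSS}, one could simply cite it, but here is the strategy I would use to reconstruct a proof. Recall that the \emph{tropical rank} of $M=(m_{ij})$ is the largest $r$ for which some $r\times r$ submatrix $M'$ of $M$ is tropically nonsingular, that is, the minimum $\min_{\sigma}\sum_i m'_{i\sigma(i)}$ over permutations $\sigma$ is attained by exactly one $\sigma$. Write $v_1,\dots,v_n\in\RR^{k}/\RR\mathbf 1$ for the columns of $M$ and $P=\tconv(v_1,\dots,v_n)$. The plan is to compare both quantities with the combinatorics of the regular subdivision $\Delta(M)$ of $\Delta_{n-1}\times\Delta_{k-1}$ induced by the weights $m_{ij}$: by the Develin--Sturmfels structure theorem, $P$ is the union of the bounded cells of the polyhedral complex dual to $\Delta(M)$, the local dimension of $P$ at a point $x$ of type $S=(S_1,\dots,S_k)$ is governed by the connectivity of the bipartite graph on point-indices and coordinate-indices having an edge $(i,j)$ whenever $i\in S_j$, and passing to an $r\times r$ submatrix of $M$ corresponds to restricting $\Delta(M)$ to a face $\Delta_{r-1}\times\Delta_{r-1}$ of $\Delta_{n-1}\times\Delta_{k-1}$.

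I would then prove the two inequalities separately. For $\dim P\ge r-1$: fix a tropically nonsingular $r\times r$ submatrix $M'$ on row set $I$ and column set $J$, and let $\pi_I\colon\RR^{k}/\RR\mathbf 1\to\RR^{r}/\RR\mathbf 1$ be the coordinate projection onto the rows in $I$. Since coordinate projection commutes with tropical linear combinations, $\pi_I(P)=\tconv(\pi_I v_1,\dots,\pi_I v_n)\supseteq\tconv(\pi_I v_j:j\in J)$, and as $\pi_I$ is linear, $\dim P\ge\dim\pi_I(P)\ge\dim\tconv(\pi_I v_j:j\in J)$; so it suffices to treat the square case and show that $r$ points in $\RR^{r}/\RR\mathbf 1$ whose coordinate matrix $M'$ is tropically nonsingular span a full-dimensional tropical polytope. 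For this one may apply the row and column translations (harmless, as they only translate $P'$ inside $\RR^{r}/\RR\mathbf 1$ and re-choose the representatives of the $\pi_I v_j$) that put $M'$ into the normal form with zero diagonal, nonnegative entries, and zero-pattern bipartite graph admitting a unique perfect matching; then the all-zero tropical combination shows $0\in\tconv(\pi_I v_j:j\in J)$, and uniqueness of the perfect matching is exactly what forces $0$ to be an interior point, so this tropical polytope has dimension $r-1$. For $\dim P\le r-1$: assuming $\dim P\ge r$, pick a point $x$ in the relative interior of an $r$-dimensional cell of $P$ and examine its type $S$; following the subdivision argument of \cite{DSS}, the $r$-dimensional cell is dual to a cell of $\Delta(M)$ which, restricted to a suitable choice of $r+1$ rows and $r+1$ columns, forces the minimum in the tropical determinant of the corresponding $(r+1)\times(r+1)$ submatrix $M''$ to be attained at least twice; hence $M''$ is tropically singular, contradicting that the tropical rank of $M$ equals $r$.

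The main obstacle is the upper-bound direction. Extracting the $(r+1)\times(r+1)$ tropically singular submatrix from an $r$-dimensional cell of $P$ requires careful bookkeeping of which coordinate- and point-indices lie in a common connected component of the bipartite graph of $S$, and of how restriction to a submatrix interacts with the ``minimum attained at least twice'' condition; one must also keep the $\min$/$\max$ conventions used for $\Delta(M)$ consistent with the sector conventions used earlier in the paper. By comparison, the lower-bound ingredients --- that coordinate projection commutes with tropical linear combination and does not raise dimension, and the normal-form criterion for a tropically nonsingular square matrix --- are routine.
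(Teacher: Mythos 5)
The paper offers no proof of this statement: it is imported verbatim from \cite[Theorem 4.2]{DSS} and used as a black box in Section 3, so your opening observation that a citation suffices is exactly the paper's treatment, and nothing more is required of you here. Your reconstruction does follow the genuine Develin--Santos--Sturmfels route (the regular subdivision of $\Delta_{n-1}\times\Delta_{k-1}$, types/covectors, reduction of the lower bound to square matrices via coordinate projection), and you are right that the upper-bound direction is where the real work lies.

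One correction to the step you dismiss as routine: with only the weak normal form (zero diagonal, nonnegative entries, unique perfect matching in the zero pattern) the origin need \emph{not} be an interior point of the tropical polytope. For $M'=\left(\begin{smallmatrix}0&0\\1&0\end{smallmatrix}\right)$ the identity is the unique minimizing permutation and the zero pattern has a unique perfect matching, yet $0$ coincides with the second column and is an endpoint of the tropical segment $\tconv\bigl((0,1),(0,0)\bigr)$, not an interior point. The full-dimensionality you actually need still holds, but to get it by your mechanism you should pass to the strict normal form --- zero diagonal and strictly positive off-diagonal entries, achievable by row and column scalings precisely when the square matrix is tropically nonsingular (complementary slackness for the assignment LP) --- after which the type of $0$ is $(\{1\},\dots,\{r\})$ and the cell containing $0$ is full-dimensional in $\RR^r/\RR\mathbf 1$. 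This is a local repair, not a change of strategy.
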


\begin{lem}\label{lem:tropSingular}\cite[Lemma 5.1]{RGST}
An $n\times n$ matrix $M$ is singular if and only if its rows lie on a tropical hyperplane in $\mathbb R^n/\mathbb R\mathbf 1$.
\end{lem}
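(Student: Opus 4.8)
The plan is to read both sides of the equivalence through the \emph{tropical determinant} $\tropDet(M)=\bigoplus_{\sigma\in S_n}\bigodot_{i}m_{i\sigma(i)}=\min_{\sigma}\sum_i m_{i\sigma(i)}$: by definition $M$ is singular exactly when this minimum is attained by at least two permutations, and I would freely use that $\tropDet(M)=\tropDet(M^{T})$, so $M$ is singular iff $M^{T}$ is, and that an $n\times n$ matrix has tropical rank $n$ precisely when it is tropically nonsingular. Throughout, the rows $m_1,\dots,m_n\in\RR^n/\RR\mathbf 1$ lie on the tropical hyperplane with apex $a=(a_1,\dots,a_n)$ iff for every $i$ the minimum $\mu_i(a):=\min_j(a_j+m_{ij})$ is attained at least twice.

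For the implication ``rows on a common tropical hyperplane $\Rightarrow$ $M$ singular'' I would argue cleanly via Theorem~\ref{thm:tropRank}. A tropical hyperplane $H\subset\RR^{n}/\RR\mathbf 1$ is a tropical linear space, hence tropically convex and of dimension $n-2$; so if all rows of $M$ lie on $H$ then $\tconv$ of the rows is contained in $H$ and has dimension at most $n-2$. Since the rows of $M$ are the columns of $M^{T}$, Theorem~\ref{thm:tropRank} gives that the tropical rank of $M^{T}$ is at most $n-1<n$, so $M^{T}$, and therefore $M$, is tropically singular. (The one input I am assuming beyond the excerpt is that a tropical linear space is tropically convex; this is standard and is already implicit in the paper's use of \cite[Theorem 2]{D-S} for the hyperplanes $\{x_i-x_j=0\}$ and $\{x_k=0\}$.)

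For the converse, ``$M$ singular $\Rightarrow$ rows on a common tropical hyperplane'', I would build the apex by linear-programming duality together with a bipartite-matching lemma. Take an optimal dual $(u,v)$ of the assignment linear program for $(m_{ij})$ — maximize $\sum_i u_i+\sum_j v_j$ subject to $u_i+v_j\le m_{ij}$ — and set $a:=-v$. Complementary slackness with any $\tropDet$-optimal permutation gives $\mu_i(a)=u_i$ for every $i$, and the ``tight graph'' $G=\{(i,j):m_{ij}=u_i+v_j\}$ has the property that its perfect matchings are exactly the $\tropDet$-optimal permutations, hence there are at least two. The combinatorial core is the elementary fact that a bipartite graph with equal colour classes in which every vertex on one side has degree at least $2$ has either no perfect matching or at least two (proof: a unique perfect matching $\mathfrak m$ makes the digraph obtained by contracting $\mathfrak m$ and orienting the remaining edges against it acyclic, hence it has a sink, which forces a degree-$1$ vertex on the distinguished side). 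If every row has degree $\ge 2$ in $G$ we are done, since then $\mu_i(a)=u_i$ is attained $\ge2$ times for each $i$.

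The main obstacle is the case in which some row $i_0$ has a \emph{unique} tight column $j_0$ in $G$. Then every optimal permutation sends $i_0\mapsto j_0$, so deleting row $i_0$ and column $j_0$ produces an $(n-1)\times(n-1)$ matrix that is still tropically singular (its optimal permutations are the restrictions of those of $M$, still at least two), and I would close by induction on $n$: the inductive hypothesis supplies a tropical hyperplane through the remaining rows with some apex $a'$, and one reinserts the coordinate $j_0$ by choosing $a_{j_0}$, after a harmless additive renormalization of $a'$, so that row $i_0$ too attains its minimum twice. The delicate point — and what I expect to be the real work — is checking that this reinsertion does not destroy the ``attained twice'' property at the other rows; here one uses Hall's theorem on the deficient sets of $G$ to control how the tight columns of the other rows interact with $j_0$. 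This is exactly the ``degree $\ge 2$ at every row simultaneously'' issue, which is why a one-line matching argument does not suffice.
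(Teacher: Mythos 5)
First, note that the paper does not prove this statement at all: it is quoted verbatim from \cite[Lemma 5.1]{RGST} and used as a black box, so there is no internal proof to compare yours against; I am assessing your argument on its own terms. Your direction ``rows on a tropical hyperplane $\Rightarrow$ $M$ singular'' is correct as written, granting Theorem~\ref{thm:tropRank} together with the standard facts that a tropical hyperplane is tropically convex of dimension $n-2$ and that tropical singularity and tropical rank are transpose-invariant; it is, however, a heavy hammer. A self-contained argument exists: normalize so that the apex is $0$ and each row has minimum $0$ attained at least twice, take an optimal permutation $\sigma$, and observe that in the digraph on columns with an edge $\sigma(i)\to j$ for every zero $m_{ij}=0$ with $j\ne\sigma(i)$ each vertex has out-degree at least one, so there is a directed cycle; swapping $\sigma$ along that cycle produces a distinct permutation of no larger (hence equal) weight.

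The converse is where the genuine gap lies, and you have located it yourself but not closed it. Your LP setup is sound: for an optimal dual $(u,v)$ the perfect matchings of the tight graph are exactly the optimal permutations, and your matching lemma is correct. But when some row $i_0$ has a unique tight column $j_0$, the inductive repair does not go through as described. The inductive hypothesis hands you an \emph{arbitrary} apex $a'$ for the reduced matrix, with no control over $c:=\min_{j\ne j_0}(a'_j+m_{i_0j})$ or over where that minimum is attained. If it is attained only once, you are forced to set $a_{j_0}=c-m_{i_0j_0}$ exactly, while protecting the other rows forces $a_{j_0}\ge\max_{i\ne i_0}\bigl(\min_{j\ne j_0}(a'_j+m_{ij})-m_{ij_0}\bigr)$; nothing in your argument shows these two constraints are compatible, and since $a'$ bears no relation to row $i_0$ (which is absent from the reduced matrix) there is no reason they should be for the particular $a'$ the induction returns. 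The sentence invoking ``Hall's theorem on the deficient sets of $G$'' is a placeholder rather than a proof: to finish you would need either to strengthen the inductive hypothesis (for instance, produce an apex with quantitative control tied to an optimal dual of the full matrix) or to restructure the construction of the apex entirely. Until that is done, the direction ``singular $\Rightarrow$ rows on a tropical hyperplane'' is not established.
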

 
As a first step towards proving  (\ref{eq: tropClass}), we prove the following lemma.

\begin{lem}\label{lem: degree finite}
If $\Gamma \subset \PT^n$ is a fan tropical curve and $W \subset \Gamma$ is finite, then $$\dim\tconv W \leq \deg \Gamma.$$
\end{lem}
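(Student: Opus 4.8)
The plan is to reduce the statement to a linear-algebraic fact about the rays of $\Gamma$ via the two cited results (Theorem~\ref{thm:tropRank} and Lemma~\ref{lem:tropSingular}) together with the degree formula~(\ref{degree}). First I would let $W = \{w_1,\dots,w_m\} \subset \Gamma$ be finite and set $d = \dim\tconv W$. By Theorem~\ref{thm:tropRank}, $d+1$ equals the tropical rank of the $n \times m$ matrix whose columns are the $w_i$; in particular there is a $(d+1)\times(d+1)$ submatrix that is tropically nonsingular, so by Lemma~\ref{lem:tropSingular} there are $d+1$ of the points, say $w_1,\dots,w_{d+1}$ (after reindexing and restricting to $d+1$ coordinates), whose images do \emph{not} lie on a common tropical hyperplane in $\RR^{d+1}/\RR\mathbf 1$. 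I then want to say that each $w_i$, lying on $\Gamma$, is a nonnegative combination of the primitive ray generators $v_1,\dots,v_k$ of $\Gamma$ — this is where the fan hypothesis is used: a fan tropical curve is the union of its rays through the origin, so every point of $\Gamma$ is a nonnegative multiple of some $v_i$.

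Next I would exploit the balancing/degree relation $(\deg\Gamma)\mathbf 1 = \sum_i m_i v_i$ from~(\ref{degree}). The idea is that the $v_i$ themselves, together with $\mathbf 1$, control how many of the $w_i$ can be in ``general position'' modulo $\RR\mathbf 1$: since each $w_i$ is proportional to some $v_{\sigma(i)}$, the $d+1$ points $w_1,\dots,w_{d+1}$ span the same subspace (mod $\RR\mathbf 1$) as the corresponding $v_{\sigma(i)}$, and if two of them were proportional to the same $v_j$ they would be equal mod $\RR\mathbf 1$ and hence trivially lie on a common tropical hyperplane — contradiction. So the $d+1$ rays $v_{\sigma(1)},\dots,v_{\sigma(d+1)}$ are \emph{distinct}, and moreover they cannot lie on a common tropical hyperplane. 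Now I would invoke the degree formula: $\deg\Gamma = $ (the common value of the coordinates of) $\sum_i m_i v_i$, and each $v_i$ is a nonzero nonnegative integer vector with $\mathbf 1$-independent content, so each contributes at least $1$ to $\deg\Gamma$ in an appropriate sense. The cleanest route is: among the $k$ rays, the $d+1$ rays $v_{\sigma(1)},\dots,v_{\sigma(d+1)}$ that fail to lie on a tropical hyperplane must in particular be such that $\sum_{j} m_j v_j$ has all coordinates equal and positive, which forces $\deg\Gamma = (\sum m_j v_j)_1 \ge $ (number of distinct coordinate ``directions'' needed), and one shows this lower bound is at least $d$...

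Here I expect the main obstacle: translating ``$d+1$ of the $v_i$ are in tropically general position mod $\RR\mathbf 1$'' into the numerical lower bound $d \le \deg\Gamma$. The honest mechanism is that $(\deg\Gamma)\mathbf 1 = \sum m_i v_i$ with all $v_i \ge 0$ integral and $m_i \ge 1$ integral; so $\deg\Gamma$ is the (common) coordinate sum-weight, and if we had $\deg\Gamma < d$, then $\sum m_i v_i$ would be a nonnegative integer vector each of whose coordinates equals $\deg\Gamma < d$, so in particular at most $\deg\Gamma$ of the $v_i$ can have a nonzero first coordinate (since they are integral and $m_i \ge 1$), and similarly coordinate-by-coordinate — the support structure of at most $\deg\Gamma$ ``active'' rays per coordinate must then confine the columns $w_i$, hence a $(d+1)$-subset of columns, to lie on a tropical hyperplane, contradicting tropical nonsingularity. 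I would make this precise by a pigeonhole/counting argument on the nonzero entries of the matrix $[\,v_{\sigma(1)} \mid \cdots \mid v_{\sigma(d+1)}\,]$: if $\deg\Gamma < d$ then in total the matrix $\mathrm{diag}(m)\cdot[v_i]$ summed over \emph{all} rays has row sums $< d$... and conclude via Lemma~\ref{lem:tropSingular} that the $d+1$ selected points \emph{do} lie on a common tropical hyperplane, the desired contradiction. So the proof outline is: (1) $d+1 = $ tropical rank of the $w_i$-matrix, extract a tropically nonsingular $(d+1)$-submatrix; (2) each $w_i \propto v_{\sigma(i)}$ by the fan hypothesis, and the selected $v$'s are distinct and tropically nonsingular; (3) a counting argument using $(\deg\Gamma)\mathbf 1 = \sum m_i v_i$ with integral $m_i \ge 1$ and $v_i \ge 0$ shows $\deg\Gamma < d$ would force these $d+1$ points onto a tropical hyperplane, contradiction; hence $d \le \deg\Gamma$.
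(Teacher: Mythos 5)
Your overall strategy is the same as the paper's: use the degree formula $(\deg\Gamma)\mathbf{1}=\sum_i m_iv_i$ to bound the number of nonzero entries in each row of the matrix of ray generators, and convert that zero pattern into tropical singularity of large minors via Lemma~\ref{lem:tropSingular}, hence a bound on tropical rank and, by Theorem~\ref{thm:tropRank}, on $\dim\tconv W$. (The paper argues directly that every $(d+2)\times(d+2)$ minor is singular, where $d=\deg\Gamma$; you argue by contradiction from a nonsingular minor of size $\dim\tconv W+1$. These are equivalent.)

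There is, however, a genuine gap at the step where you claim that the $d+1$ selected points lie on \emph{distinct} rays. Your justification --- that two points proportional to the same $v_j$ ``would be equal mod $\RR\mathbf 1$'' --- is false: $\lambda v$ and $\mu v$ differ by $(\lambda-\mu)v$, which is a multiple of $\mathbf 1$ only when $v$ itself is. And distinctness is not a removable convenience: your pigeonhole count (at most $\deg\Gamma$ rays have nonzero $j$th coordinate, so each row of the selected submatrix has at least two zeros) breaks down if several selected columns are multiples of the same $v_j$; for instance, if all of $W$ sits on one ray whose generator has nonzero $j$th coordinate, then row $j$ of your submatrix has no zeros at all. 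Nor can you dismiss proportional columns as automatically yielding a singular matrix: the proof of Theorem~\ref{thm:degree general} exhibits a tropically \emph{nonsingular} matrix all of whose columns are multiples of a single point. The repeated-ray configurations need their own argument, which the paper supplies by a case analysis on $|\Supp W|$: when all of $W$ lies on one ray it uses that the generator has at most $\deg\Gamma$ nonzero distinct coordinates together with Lemma~\ref{lem: fan-diffCoord}; in the mixed case it uses that tropical hyperplanes are fans, so the hyperplane containing the columns of a singular submatrix of $[v_1|\cdots|v_k]$ also contains all positive scalar multiples of those columns. You would need to add these cases (or an equivalent argument) for your proof to go through.
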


\begin{proof}
Let $\deg\Gamma = d$ and $\Gamma$ be given by rays $r_1=\pos(v_1),\ldots,r_k=\pos(v_k)$ with minimal nonnegative vectors $v_1,\ldots,v_k$. Let $W \subset \Gamma$ be a finite set of points and $\Supp W$ denote the set of minimal nonnegative vectors of rays which contain a point of $W$. That is, $$\Supp W = \{ v_i \mid w \in \pos(v_i) \text{ for some } w \in W\}.$$
\par First suppose $|\Supp W| = 1$, so $W \subset r_i$ for some $i \in [k]$ and $\dim\tconv W \leq \dim\tconv r_i$. Each ray of $\Gamma$ has at most $d$ nonzero distinct entries since $\deg \Gamma = d$. By Lemma~\ref{lem: fan-diffCoord} this means $\dim\tconv r_i \leq d$ for all $i \in [k]$ and $\dim\tconv W\leq d$.
\par Let $M$ be the $(n+1)\times k$ matrix whose columns are $v_1,\ldots,v_k$.
We also assume $n+1$, $k \geq d+2$. Otherwise, the result is trivially true. 
We will show that the tropical rank of $M$ is at most $d+1$, implying that $\tconv(v_1,\ldots,v_k) \leq d$. 
Let $D$ be any $(d+2)\times(d+2)$ submatrix of $M$. Each row of $D$ has all nonnegative entries and must have at least two zeros because $\deg \Gamma=d$. Hence, the rows of $D$ lie in 
the tropicalization of the ordinary hyperplane $V(x_0+\ldots+x_{d+1})$ in $\PP\TT^{d+1}$.  By Lemma~\ref{lem:tropSingular} this implies $D$ is tropically singular, so the tropical rank of $M$ is at most $d+1$. Using Theorem~\ref{thm:tropRank} we deduce that  the dimension of the tropical convex hull of the columns of $M$ is at most $d$. 
\par Now suppose $|\Supp W| = |W|$, so each point of $W$ is on a distinct ray of $\Gamma$. More specifically, each point of $W$ is a classical scalar multiple of some distinct $v_i$. The tropical convex hull of any $d+2$ columns of $M$ has dimension at most $d$ and the same holds if each column is scaled since the location of the zero entries is not affected. 
\par Next suppose $1 < |\Supp W| < |W|$ and let $W=\{w_1,\ldots,w_s\}$. Let $M'$ be the $(n+1)\times s$ matrix whose columns are $w_1,\ldots,w_s$. More specifically, its columns are classical scalar multiples of some $v_i$s in $\Supp W$. We know from the previous case that $M$ is tropically singular and the tropical rank is at most $d+1$.  By Lemma~\ref{lem:tropSingular} we have that the columns of any $(d+2)\times(d+2)$ submatrix of $M$ are contained in some hyperplane in $\PP\TT^{d+1}$. If a point is contained in a tropical hyperplane, so is any classical scalar multiple of that point since any tropical hyperplane is a fan. 
For this reason, the columns of any $(d+2)\times(d+2)$ submatrix of $M'$ must also  be contained in at least one of these hyperplanes of $\PP\TT^{d+1}$ from before. Therefore, $M'$ has tropical rank at most $d+1$ and $\dim\tconv W \leq d$.
\end{proof}

\begin{thm}\label{thm:degree general}
If $\Gamma\subset \PT^n$ is a fan tropical curve,
 then $\dim \tconv \Gamma \leq \deg \Gamma$.
\end{thm}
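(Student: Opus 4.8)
The plan is to reduce the fan statement to the finite-set statement Lemma~\ref{lem: degree finite} using the structural description of $\tconv\Gamma$ as a fan. Write $d=\deg\Gamma$ and let $v_1,\dots,v_k$ be the minimal nonnegative integer representatives of the rays of $\Gamma$. By Lemma~\ref{Cor: properties}, $\tconv\Gamma$ is a fan, and running the computation from the proof of Lemma~\ref{Cor: properties} for the polyhedral complex $\Gamma=\bigcup_i\pos(v_i)$ together with Proposition~\ref{prop: tconv intersection} exhibits it as a finite union of polyhedral cones, $\tconv\Gamma=\bigcup_{(i_0,\dots,i_n)}E(i_0,\dots,i_n)$, where $E(i_0,\dots,i_n)=\bigcap_{j=0}^n(\pos(v_{i_j})+\mcS_j)$. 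Since the dimension of a finite union is the maximum of the dimensions, it suffices to show $\dim E(i_0,\dots,i_n)\le d$ for each tuple.

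Fix a tuple and set $E=E(i_0,\dots,i_n)$. Because each $\mcS_j$ is a cone, $E=\bigcup_{s\in\RR_{\ge0}^{n+1}}E_s$ with $E_s=\bigcap_{j=0}^n(s_jv_{i_j}+\mcS_j)$. For any $s\in\RR_{\ge0}^{n+1}$ the points $s_0v_{i_0},\dots,s_nv_{i_n}$ lie on rays of $\Gamma$, hence form a finite subset of $\Gamma$, and applying Proposition~\ref{prop: tconv intersection} to this finite set gives $E_s\subseteq\tconv\{s_0v_{i_0},\dots,s_nv_{i_n}\}$; by Lemma~\ref{lem: degree finite}, $\dim E_s\le d$. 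Thus $E$ is a union, over an $(n{+}1)$-parameter family, of sets of dimension at most $d$, and the crux is that this union is still at most $d$-dimensional. For fixed $y$ the conditions defining $y\in E_s$ decouple across $j$, and for each $j$ they restrict $s_j$ to an interval whose endpoints are rational linear combinations of the coordinates of $y$; hence whenever $y\in E$ and these intervals are nondegenerate — which fails only on a finite union of rational hyperplanes — one may take $s$ rational. Consequently $E$ agrees, away from a subset of dimension $<\dim E$, with $\bigcup_{s\in\QQ_{\ge0}^{n+1}}E_s$, a \emph{countable} union of closed sets of dimension $\le d$; the countable sum theorem for topological dimension then gives $\dim E\le d$. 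Taking the maximum over all tuples yields $\dim\tconv\Gamma\le d=\deg\Gamma$.

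The main obstacle is precisely this last step: passing from the bound $\dim E_s\le d$ on each bounded cell to the bound $\dim E\le d$ on the unbounded cone they sweep out, since an $(n{+}1)$-parameter family of $d$-dimensional polytopes could a priori fill out something of dimension $n{+}1{+}d$. It is the rationality of $\Gamma$ — hence of the cone $E$ and of all the inequalities defining the $E_s$ — that rules this out, by letting one replace the uncountable family $\{E_s:s\in\RR_{\ge0}^{n+1}\}$ (up to a negligible set) by the countable subfamily with rational $s$. One could instead first use Lemma~\ref{lem: fan-diffCoord} to write $\dim\tconv\Gamma=\dim\tconv\pos(x)$ for a single $x\in\tconv\Gamma$, observe via (\ref{def: tconvInf}) and common scaling that $\tconv\pos(x)\subseteq\tconv\Gamma'$ for a finite union of rays $\Gamma'\subseteq\Gamma$, and run the same cone argument for $\tconv\Gamma'$; this concentrates the difficulty in one cone but does not eliminate it.
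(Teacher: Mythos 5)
Your reduction of $\tconv\Gamma$ to the cones $E=\bigcap_{j=0}^n(\pos(v_{i_j})+\mcS_j)$, the identity $E=\bigcup_{s}E_s$, and the bound $\dim E_s\le d$ for each fixed $s$ (via Proposition~\ref{prop: tconv intersection} and Lemma~\ref{lem: degree finite}) are all correct. But the step you yourself flag as the crux --- passing from a bound on each $E_s$ to a bound on the uncountable union --- is not established. Your argument is that away from the locus where some interval $\{s_j:y\in s_jv_{i_j}+\mcS_j\}$ degenerates to a point one may take $s$ rational, and that this locus, being contained in a finite union of rational hyperplanes, has dimension $<\dim E$. That last inference fails: a finite union of hyperplanes in $\PT^n$ has dimension $n-1$, which need not be smaller than $\dim E$, and $E$ may be entirely contained in one of these hyperplanes. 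On such a hyperplane the forced value of $s_j$ is a rational-linear function of $y$, which in general takes irrational values on a full-dimensional subset of $E$; then the countable subfamily $\{E_s:s\in\QQ_{\ge 0}^{n+1}\}$ misses a full-dimensional portion of $E$ and the countable sum theorem gives nothing. Repairing this (say, by induction on the hyperplanes containing $E$) is not routine, and the proposal does not supply it.

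The route you dismiss in your final sentence is in fact the one that closes the argument, and it is essentially what the paper does; it needs no sweeping family at all. If $\dim\tconv\Gamma\ge d+1$, then since $\tconv\Gamma$ is a fan (Lemma~\ref{Cor: properties}), Lemma~\ref{lem: fan-diffCoord} produces a single point $p\in\tconv\Gamma$ with $d+2$ distinct coordinates, and $\pos(p)\subset\tconv\Gamma$. Taking $d+2$ scalar multiples $\la_1p<\cdots<\la_{d+2}p$, an explicit tropical determinant computation on the $(d+2)\times(d+2)$ minor supported on rows where $p$ has distinct entries shows the minimum is attained uniquely on the antidiagonal, so the matrix is tropically nonsingular; by Theorem~\ref{thm:tropRank} the tropical convex hull of these $d+2$ points alone has dimension $\ge d+1$. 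Since each $\la_ip$ lies in $\tconv W_i$ for some finite $W_i\subset\Gamma$ by (\ref{def: tconvInf}), their tropical convex hull is contained in $\tconv\bigl(\bigcup_iW_i\bigr)$, contradicting Lemma~\ref{lem: degree finite}. The point your sketch misses is that once Lemma~\ref{lem: fan-diffCoord} hands you the witness ray $\pos(p)$, only finitely many points on it are needed, so the uncountable-union difficulty never arises.
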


\begin{proof}
Let $\deg \Gamma = d$ and suppose $\dim\tconv\Gamma= d+1$. Since $\tconv \Gamma$ is a fan, there exists a  point 
$p$ with $d+2$ distinct coordinates by Lemma~\ref{lem: fan-diffCoord}. Moreover, $\Gamma$ contains the ray $\pos(p)$. Note that we can choose $p$ to be the minimal nonnegative integer vector that generates this ray. Since $p$ has $d+2$ distinct coordinates, we may assume that $0=p_0<p_1<\cdots<p_{d+1}$. Let $\la_ip$ be $d+2$ distinct points on the ray $\pos(p)$ and assume
 $\la_1<\la_2<\cdots<\la_{d+2}$. Let $M_p$ be the $(n+1)\times (d+2)$ matrix whose columns are $\la_ip$ for $i\in[d+2]$. Then, up to permutation of rows, $M_p$ contains the $(d+2)\times(d+2)$ submatrix 
 \[
 D = \begin{pmatrix}
0 & 0 & \ldots & 0\\
\la_1p_1 & \la_2p_1  & \ldots & \la_{d+2}p_1\\
\vdots & \vdots & \ddots & \vdots\\
\la_1p_{d} & \la_2p_{d} & \ldots & \la_{d+2}p_{d}\\
\la_1p_{d+1} & \la_2p_{d+1} & \ldots & \la_{d+2}p_{d+1}
\end{pmatrix}.
 \]
We will show that $D$ has tropical rank $d+2$ by showing that the tropical determinant of $D$ has a unique minimum attained on its antidiagonal. 
    Using Laplace expansion along the first row, we write the tropical determinant of $D$ as
    $$\tropDet(D) = \min_{i \in [d+2]} 0 + \tropDet(D_i)$$ where $D_i$ is the $(d+1)\times(d+1)$ submatrix of $D$ obtained by deleting its first row and $i$th column.
    We first claim that $\tropDet(D_i) = m_i$ for any $i \in [d+2]$ where $$m_i = \la_1p_{d+1}+\la_2p_d+\la_3p_{d-1}+\cdots+\la_{i-1}p_{d-i+3}+\la_{i+1}p_{d-i+2}+\cdots+\la_{d+1}p_2+\la_{d+2}p_1.$$ Recall that for a $(d+1)\times (d+1)$ matrix $X$, its tropical determinant can be written \[
\tropDet(X) = \bigoplus_{\sigma\in S_{d+1}}x_{1\sigma(1)}\odot x_{2\sigma(2)}\odot\cdots\odot x_{d+1,\sigma(d+1)}.
\]
Let
\begin{align*}
\sigma(m_i) = \la_1p_{\sigma(d+1)}+\la_2p_{\sigma(d)}&+\la_3p_{\sigma(d-1)}+\cdots+\la_{i-1}p_{\sigma(d-i+3)}\\
&+\la_{i+1}p_{\sigma(d-i+2)}+\cdots+\la_{d+1}p_{\sigma(2)}+\la_{d+2}p_{\sigma(1)}.
\end{align*}
Any permutation $\sigma$ can be decomposed into adjacent transpositions of the form $\tau = (j, j+1)$. It suffices to show that $m_i < \tau(m_i)$ to conclude $m_i<\sigma(m_i)$
for any permutation $\sigma\in S_{d+1}$. Let $\tau(m_i)$ represent the expression $m_i$ where $p_{j}$ and $p_{j+1}$ have been exchanged.
First, suppose that $j>d-i+2$, which implies that 
$$m_i-\tau(m_i) = (\lambda_{d-j+2}-\lambda_{d-j+1})(p_{j}-p_{j+1}) < 0.$$
Similarly, if $j< d-i+2$, then  $$m_i-\tau(m_i) = (\lambda_{d-j+3}-\lambda_{d-j+2})(p_{j}-p_{j+1}) < 0.$$ 
If $j=d-i+2$, then $$m_i-\tau(m_i) = (\lambda_{i+1}-\lambda_{i-1})(p_{d-i+2}-p_{d-i+3}) < 0.$$
It follows that $m_i<\tau(m_i)$ for any transposition $\tau=(j,j+1).$

\par Finally, we have $\tropDet(D) = \min_{i\in[d+2]} m_i$. For any $i \in [d+1]$ 
$$m_{i+1} - m_{i} = (a_{i}-a_{i+1})p_{d-i+2} < 0.$$ 
meaning $m_{i+1} < m_i$.  Hence the unique minimum is obtained for $i=d+2$. This implies $D$ has tropical rank at least $d+2$, so by Theorem~\ref{thm:tropRank} the dimension of the tropical convex hull of the columns of $D$ is at least $d+1$ which contradicts Lemma~\ref{lem: degree finite}.
\end{proof}

The following proposition shows that (\ref{eq: tropClass}) holds  for some special types of  tropical curves which are not fans.

\begin{prop}\label{prop:degree}
Let $\Gamma$ be a tropical  curve in $\PT^n$ with rays $r_1,\dots, r_k$. 
If 
$
 \dim \tconv\Gamma=\max_{i\in[k]} \{\dim \tconv r_i\}$,
then 
$
 \dim \tconv \Gamma \le  \deg\Gamma.  
$

\end{prop}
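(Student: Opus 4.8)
The plan is to reduce the statement for a general tropical curve $\Gamma$ to the fan case already handled in Theorem~\ref{thm:degree general}, exploiting the hypothesis $\dim\tconv\Gamma=\max_{i\in[k]}\{\dim\tconv r_i\}$. The key observation is that for each ray $r_i=w+\pos(v_i)$ of $\Gamma$, the quantity $\dim\tconv r_i$ depends only on the direction $v_i$: by Corollary~\ref{cor: dim of line segment}(ii), $\dim\tconv\pos(v_i)$ equals the number of nonzero distinct coordinates of $v_i$, and translating by $w$ does not change this (a translation of a tropical line segment only shifts the pseudovertices, so $\tconv r_i$ is a translate of $\tconv\pos(v_i)$ and has the same dimension). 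Hence $\max_{i\in[k]}\{\dim\tconv r_i\}=\max_{i\in[k]}\{\dim\tconv\pos(v_i)\}$.

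Next I would consider the fan $\widetilde\Gamma$ with rays $\pos(v_1),\dots,\pos(v_k)$ and the same multiplicities $m_1,\dots,m_k$. By the balancing relation (\ref{degree}), namely $(\deg\Gamma)\mathbf 1=\sum_{i=1}^k m_iv_i$, this fan $\widetilde\Gamma$ is balanced and is a fan tropical curve with $\deg\widetilde\Gamma=\deg\Gamma$. Therefore Theorem~\ref{thm:degree general} applies to $\widetilde\Gamma$ and gives $\dim\tconv\widetilde\Gamma\le\deg\widetilde\Gamma=\deg\Gamma$. It now suffices to show $\dim\tconv\Gamma\le\dim\tconv\widetilde\Gamma$. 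Using the hypothesis and the previous paragraph, $\dim\tconv\Gamma=\max_i\{\dim\tconv\pos(v_i)\}$; on the other hand $\tconv\widetilde\Gamma\supseteq\tconv\pos(v_i)$ for every $i$ (since each $\pos(v_i)\subseteq\widetilde\Gamma$ and $\widetilde\Gamma\subseteq\tconv\widetilde\Gamma$ which is tropically convex by Lemma~\ref{Cor: properties}), so $\dim\tconv\widetilde\Gamma\ge\max_i\{\dim\tconv\pos(v_i)\}=\dim\tconv\Gamma$. Chaining the inequalities yields $\dim\tconv\Gamma\le\deg\Gamma$.

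The step I expect to be the main obstacle is the claim that $\widetilde\Gamma$ is genuinely a (weighted balanced) fan tropical curve to which Theorem~\ref{thm:degree general} applies, and in particular that $\deg\widetilde\Gamma=\deg\Gamma$: one must check that moving the apex $w$ to the origin preserves both the balancing condition (immediate, since balancing at the unique vertex of each cone only involves the ray directions) and the degree computed as the multiplicity of the stable intersection with the standard tropical hyperplane. This last point is exactly the content of the formula (\ref{degree}), which expresses $\deg\Gamma$ purely in terms of the directions $v_i$ and multiplicities $m_i$; since $\widetilde\Gamma$ has the same $v_i$ and $m_i$, applying (\ref{degree}) to $\widetilde\Gamma$ gives the same value, so $\deg\widetilde\Gamma=\deg\Gamma$. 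The remaining verifications — that $\dim\tconv r_i=\dim\tconv\pos(v_i)$ under translation, and the monotonicity $\tconv\pos(v_i)\subseteq\tconv\widetilde\Gamma$ — are routine consequences of the results in Sections~\ref{sec:onedim} and~\ref{sec:higher-dim}.
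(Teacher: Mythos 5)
Your argument is correct, but it takes a genuinely different route from the paper. You reduce to the fan case: pass to the recession fan $\widetilde\Gamma$ with rays $\pos(v_i)$ and the same multiplicities, check via (\ref{degree}) that it is balanced with $\deg\widetilde\Gamma=\deg\Gamma$, note $\dim\tconv r_i=\dim\tconv\pos(v_i)$ by translation equivariance of $\tconv$, and then invoke the full strength of Theorem~\ref{thm:degree general} together with the monotonicity $\tconv\pos(v_i)\subseteq\tconv\widetilde\Gamma$. All of these steps hold up (including the case where two rays of $\Gamma$ share a direction, which only merges multiplicities in $\widetilde\Gamma$ without changing the degree). The paper's proof is much lighter and does not use Theorem~\ref{thm:degree general} at all: it picks a single ray $r_j$ attaining the maximum $d$, observes via Corollary~\ref{cor: dim of line segment} that the primitive direction $v_j$ then has $d$ distinct nonzero integer entries plus a zero entry, hence some coordinate of $v_j$ is at least $d$; since $(\deg\Gamma)\mathbf{1}=\sum_i m_i v_i$ is a sum of nonnegative vectors dominating $v_j$ componentwise, this immediately gives $\deg\Gamma\ge d=\dim\tconv\Gamma$. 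What your approach buys is uniformity with the fan theorem; what the paper's buys is a short, self-contained argument relying only on the coordinate count of a single ray direction and the degree formula, avoiding the tropical rank and determinant machinery entirely.
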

\begin{proof}

 Let $ \dim \tconv\Gamma=\max_{i\in[k]} \{\dim \tconv r_i\}=d$ and $v_1,\dots, v_k\in\PT^{n}$ be the minimal nonnegative integer vectors such that $r_i = w_i+ \pos(v_i) \subset \PT^{n} \text{ for } i\in[k].$
 Then there exists some $j\in[k]$ such that $\dim\tconv~r_j=d$. By 
 Corollary~\ref{cor: dim of line segment} $v_j$ has $d+1$ distinct entries. Hence the maximum component of $v_j$ is at most $d.$ By (\ref{degree}) we have that
 $\dim \tconv\Gamma = d \le  \deg\Gamma$.
\end{proof}

\noindent However, Proposition~\ref{prop:degree} does not hold for all tropical curves.
\begin{exmp}
Let $\Gamma$ be the fan tropical curve in $\PT^2$ with rays spanned by $(0,1,0)$, $(0,0,1)$, $(0,0,-1),$ and $(0,-1,0)$  emanating from the origin.  Each ray $r\subset\Gamma$ is tropically convex so 
$\max_{r\in\Gamma}\{\dim\tconv r\}=1.$
However, 
$\dim\tconv\Gamma=2.$ In fact, $\tconv(\pos(0,-1,0), \allowbreak \pos(0,0,1))$ is the 2-dimensional cone spanned by $(0,-1,0)$ and $(0,0,1)$.
\end{exmp}

Finally, we give an example of a tropical curve where the smallest dimension of a linear space containing it is larger than the dimension of the tropical convex hull of the curve. 

\begin{exmp}
Consider the tropical curve $\Gamma_F$ over the field of Puiseux series $\mathbb C\!\{\!\{t\!\}\!\}$ given by the fan whose rays are the columns of $M_F$ :
\begin{equation*}
\label{eq:MF}
M_F=\begin{pmatrix}
1&1&1&0&0&0&0\\
1&0&0&1&1&0&0\\
1&0&0&0&0&1&1\\
0&1&0&1&0&1&0\\
0&0&1&0&1&1&0\\
0&0&1&1&0&0&1\\
0&1&0&0&1&0&1
\end{pmatrix}.
\end{equation*}
The curve $\Gamma_F$ has degree $3$ and there is no $2$ dimensional tropical linear space containing it \cite[Section 5.3]{M-S}. We now  prove that $\dim \tconv \Gamma_F=2$.

Let $v_1, v_2, \ldots, v_7 \in \mathbb \PT^6$ denote the columns of $M_F$. Using  Macaulay2 \cite{M2} we compute that the tropical rank of $M_F$ is $3$. By Theorem~\ref{thm:tropRank}  $\dim\tconv(v_1,\ldots,v_7)=2$ hence  $\dim\tconv \Gamma_F\ge2.$
We will show that $\dim\tconv V\leq 2$ for any finite $V\subset\Gamma_F.$ Note that this is not implied by Lemma~\ref{lem: degree finite}.

For a  finite set  $V\subset\Gamma_F$  we can consider $\Supp V$ as in the proof of Lemma~\ref{lem: degree finite}.
Suppose that $|\Supp V|=7$, implying that each point of $V\subset\Gamma_F$ is on a distinct ray.  The tropical rank of $M_F$ is $3$  and is  invariant under positive scaling of the columns of $M_F,$ which implies  $\dim\tconv(\lambda_1v_1,\ldots,\lambda_7v_7)\le2$ for any $\lambda_i>0$. 
If all 7 points are on the same ray we have that $\dim\tconv\pos (v_i)=1$ for each $i\in[7]$, since each ray is tropically convex. Hence, $\dim\tconv V=1$.
For the last case, suppose $V \subset \Gamma_F$ is such that $|\Supp V|<7$.
For each $i\in[7]$ let $V_i=\{\lambda_{i1}v_i,\ldots,\lambda_{ik_i}v_i\}\subset V$ and $\lambda_{\max_i}=\max\{\lambda_{i1},\ldots, \lambda_{ik_i}\}.$ Since each $V_i$ lies on a tropically convex ray, it follows that
$V_i \subseteq \tconv(0,\lambda_{\max_i}v_i) \subset \tconv(\lambda_{\max_1}v_1,\ldots, \lambda_{\max_7}v_7)$. Hence, $\tconv V\subset  \tconv(\lambda_{\max_1}v_1,\ldots, \lambda_{\max_7}v_7).$
The dimension of the tropical convex hull of any choice of the columns of $M_F$ is at most 2, hence $\dim\tconv V\le2.$

In order to prove that  $\dim\tconv\Gamma_F\leq2$ we use a similar argument to the one in the proof of Theorem~\ref{thm:degree general}.  Suppose that $\dim\tconv\Gamma_F=3.$ By Corollary~\ref{cor: dim of line segment}, 
$\tconv\Gamma_F$ contains a point $p$ with 4 distinct coordinates. Since $\Gamma_F$ is a fan, Corollary~\ref{Cor: properties} implies that $\tconv \Gamma_F$  contains the ray
$\pos(p)$, and we can choose $p$ to be the minimal nonnegative integer vector generating the ray. We may assume that $0=p_0<p_1<p_2<p_3.$ Let $a_1p, a_2p, a_3p,$ and $a_4p$ be four distinct points on $\pos(p)$ with $0<a_1<a_2<a_3<a_4.$ Let  $M_p$ be the
matrix with columns $a_ip$ for $i\in[4]$. Up to permutation of the rows, $M_p$ contains the $4\times4$ submatrix 

\begin{equation*}
   D =  \begin{pmatrix}
    0 & 0 & 0 & 0 \\
    a_1p_1 & a_2p_1 & a_3p_1 & a_4p_1 \\
    a_1p_2 & a_2p_2 & a_3p_2 & a_4p_2 \\
    a_1p_3 & a_2p_3 & a_3p_3 & a_4p_3
    \end{pmatrix}.
\end{equation*}
The tropical determinant of $D$ is $a_1p_3+a_2p_2+a_3p_1,$ and $D$ is tropically nonsingular. Hence, the tropical rank of $M_p$ is at least 4 and  $\dim \tconv(a_1p,\ldots,a_4p)\ge3.$
Each $a_ip \in \tconv\Gamma_F$ can be written as a tropical linear combination of a finite number of points on $\Gamma_F.$ Hence, $\tconv(a_1p,\ldots,a_4p)\subset\tconv W$ for a finite $W\subset\Gamma_F.$
This is a contradiction because $\dim \tconv W\leq2$ for all finite $W\subset\Gamma_F$. Thus $\dim \tconv\Gamma_F  = 2$.
\end{exmp}



\subsection*{Acknowledgements}
This material is based upon work supported by NSF-DMS grant \#1439786 while the authors were in residence at the Fall 2018 Nonlinear Algebra program at the Institute for Computational and Experimental Research in Mathematics in Providence, RI as well as their time spent at the Summer 2019 Collaborate@ICERM program. 
Cvetelina Hill was partially supported by NSF-DMS grant \#1600569. 
Sara Lamboglia was supported by  the LOEWE research unit Uniformized Structures in Arithmetic and Geometry.
Faye Pasley Simon was partially supported by NSF-DMS grant \#1620014.
The authors are particularly grateful to Josephine Yu for motivating this project,  helpful discussions, and a close reading. The authors would also like to thank  Marvin Hahn, Georg Loho, Diane Maclagan, and Ben Smith for useful feedback during the development of the project.

\bibliographystyle{alpha}
\bibliography{tropConvHulls}

\begin{thebibliography}{BCOQ92}

\bibitem[AGG10]{A-G-G-2}
Xavier Allamigeon, St{\'e}phane Gaubert, and Eric Goubault.
\newblock The tropical double description method.
\newblock {\em arXiv preprint arXiv:1001.4119}, 2010.

\bibitem[AGG12]{A-G-G-3}
Marianne Akian, St{\'e}phane Gaubert, and Alexander Guterman.
\newblock Tropical polyhedra are equivalent to mean payoff games.
\newblock {\em International Journal of Algebra and Computation},
  22(01):1250001, 2012.

\bibitem[BCOQ92]{B-C-G-Q}
Fran{\c{c}}ois Baccelli, Guy Cohen, Geert~Jan Olsder, and Jean-Pierre Quadrat.
\newblock Synchronization and linearity: an algebra for discrete event systems.
\newblock 1992.

\bibitem[BGS17]{B-G-S}
Anna~Lena Birkmeyer, Andreas Gathmann, and Kirsten Schmitz.
\newblock The realizability of curves in a tropical plane.
\newblock {\em Discrete \& Computational Geometry}, 57(1):12--55, 2017.

\bibitem[CGQ04]{C-G-Q}
Guy Cohen, St{\'e}phane Gaubert, and Jean-Pierre Quadrat.
\newblock Duality and separation theorems in idempotent semimodules.
\newblock {\em Linear Algebra and its Applications}, 379:395--422, 2004.

\bibitem[CGQS05]{C-G-Q-S}
Guy Cohen, St{\'e}phane Gaubert, Jean-Pierre Quadrat, and Ivan Singer.
\newblock Max-plus convex sets and functions.
\newblock {\em Contemporary Mathematics}, 377:105--130, 2005.

\bibitem[CT16]{C-T}
Robert~Alexander Crowell and Ngoc~Mai Tran.
\newblock Tropical geometry and mechanism design.
\newblock {\em arXiv preprint arXiv:1606.04880}, 2016.

\bibitem[DS04]{D-S}
Mike Develin and Bernd Sturmfels.
\newblock Tropical convexity.
\newblock {\em Doc. Math}, 9(1-27):7--8, 2004.

\bibitem[DSS05]{DSS}
Mike Develin, Francisco Santos, and Bernd Sturmfels.
\newblock On the rank of a tropical matrix.
\newblock In {\em Combinatorial and computational geometry}, volume~52 of {\em
  Math. Sci. Res. Inst. Publ.}, pages 213--242. Cambridge Univ. Press,
  Cambridge, 2005.

\bibitem[DY07]{D-YU}
Mike Develin and Josephine Yu.
\newblock Tropical polytopes and cellular resolutions.
\newblock {\em Experimental Mathematics}, 16(3):277--291, 2007.

\bibitem[EH87]{E-H}
David Eisenbud and Joe Harris.
\newblock On varieties of minimal degree.
\newblock In {\em Proc. Sympos. Pure Math}, volume~46, pages 3--13, 1987.

\bibitem[FK11]{F-K}
Stefan Felsner and Kolja Knauer.
\newblock Distributive lattices, polyhedra, and generalized flows.
\newblock {\em European J. Combin.}, 32(1):45--59, 2011.

\bibitem[FR15]{F-R}
Alex Fink and Felipe Rinc\'{o}n.
\newblock Stiefel tropical linear spaces.
\newblock {\em J. Combin. Theory Ser. A}, 135:291--331, 2015.

\bibitem[GK07]{G-K}
St{\'e}phane Gaubert and Ricardo~D Katz.
\newblock The minkowski theorem for max-plus convex sets.
\newblock {\em Linear Algebra and its Applications}, 421(2-3):356--369, 2007.

\bibitem[GK11]{G-K-2}
St{\'e}phane Gaubert and Ricardo~D Katz.
\newblock Minimal half-spaces and external representation of tropical
  polyhedra.
\newblock {\em Journal of Algebraic Combinatorics}, 33(3):325--348, 2011.

\bibitem[GM10]{G-M}
St{\'e}phane Gaubert and Fr{\'e}d{\'e}ric Meunier.
\newblock Carath{\'e}odory, helly and the others in the max-plus world.
\newblock {\em Discrete \& Computational Geometry}, 43(3):648--662, 2010.

\bibitem[GS02]{M2}
Daniel~R Grayson and Michael~E Stillman.
\newblock Macaulay2, a software system for research in algebraic geometry,
  2002.

\bibitem[GS07]{G-S}
S.~Gober and S.~N. Sergeev.
\newblock Cyclic projections and separability theorems in idempotent
  semi-modules.
\newblock {\em Fundam. Prikl. Mat.}, 13(4):31--52, 2007.

\bibitem[JK10]{J-K}
Michael Joswig and Katja Kulas.
\newblock Tropical and ordinary convexity combined.
\newblock {\em Advances in geometry}, 10(2):333--352, 2010.

\bibitem[JL16]{J-L}
Michael Joswig and Georg Loho.
\newblock Weighted digraphs and tropical cones.
\newblock {\em Linear Algebra and its Applications}, 501:304--343, 2016.

\bibitem[Jos05]{J}
Michael Joswig.
\newblock Tropical halfspaces.
\newblock {\em Combinatorial and computational geometry}, 52:409--431, 2005.

\bibitem[Jos09]{J-2}
Michael Joswig.
\newblock Tropical convex hull computations.
\newblock {\em Contemporary Mathematics}, 495:193, 2009.

\bibitem[LS19]{L-S}
Georg Loho and Ben Smith.
\newblock Face posets of tropical polyhedra and monomial ideals.
\newblock {\em arXiv preprint arXiv:1909.01236}, 2019.

\bibitem[MS15]{M-S}
Diane Maclagan and Bernd Sturmfels.
\newblock {\em Introduction to tropical geometry}, volume 161.
\newblock American Mathematical Soc., 2015.

\bibitem[RGST05]{RGST}
J\"{u}rgen Richter-Gebert, Bernd Sturmfels, and Thorsten Theobald.
\newblock First steps in tropical geometry.
\newblock In {\em Idempotent mathematics and mathematical physics}, volume 377
  of {\em Contemp. Math.}, pages 289--317. Amer. Math. Soc., Providence, RI,
  2005.

\bibitem[RSTU18]{R-S-T-U}
Elina Robeva, Bernd Sturmfels, Ngoc Tran, and Caroline Uhler.
\newblock Maximum likelihood estimation for totally positive log-concave
  densities.
\newblock {\em arXiv preprint arXiv:1806.10120}, 2018.

\end{thebibliography}

\end{document}